\title{The monoidal center and the character algebra}
\author{Kenichi Shimizu}
\date{}
\subjclass[2010]{18D10,16T05}
\address{Department of Mathematical Sciences, Shibaura Institute of Technology, 307
Fukasaku, Minuma-ku, Saitama-shi, Saitama 337-8570, Japan.}
\email{kshimizu@shibaura-it.ac.jp}
\numberwithin{equation}{section}
\newtheorem{counter}{}[section]
\theoremstyle{definition}
\newtheorem{definition}         [counter]{Definition}
\newtheorem*{notation*}         {Notation}
\theoremstyle{plain}
\newtheorem{lemma}              [counter]{Lemma}
\newtheorem{proposition}        [counter]{Proposition}
\newtheorem{theorem}            [counter]{Theorem}
\newtheorem{corollary}          [counter]{Corollary}
\newtheorem*{theorem*}          {Theorem}
\theoremstyle{remark}
\newtheorem{remark}             [counter]{Remark}
\newtheorem{example}            [counter]{Example}
\newcommand{\id}{\mathrm{id}}
\newcommand{\eval}{{\rm ev}}
\newcommand{\coev}{{\rm coev}}
\newcommand{\op}{\mathsf{op}}
\newcommand{\rev}{\mathsf{rev}}
\newcommand{\mir}{\mathsf{mir}}
\newcommand{\unitobj}{\mathbbm{1}}
\newcommand{\Hom}{\mathrm{Hom}}
\newcommand{\End}{\mathrm{End}}
\newcommand{\Nat}{\mathrm{Nat}}
\newcommand{\LEX}{\mathrm{Lex}}
\newcommand{\Trace}{\mathrm{Tr}}
\newcommand{\trace}{\mathrm{tr}}
\newcommand{\Kdelta}{\mathop{\text{\textdelta}}\nolimits}
\newcommand{\Gr}{\mathsf{Gr}}
\newcommand{\CE}{\mathsf{CE}}
\newcommand{\CF}{\mathsf{CF}}
\begin{document}

\begin{abstract}
  For a pivotal finite tensor category $\mathcal{C}$ over an algebraically closed field $k$,
  we define the algebra $\mathsf{CF}(\mathcal{C})$ of class functions and the internal character $\mathsf{ch}(X) \in \mathsf{CF}(\mathcal{C})$ for an object $X \in \mathcal{C}$ by using an adjunction between $\mathcal{C}$ and its monoidal center $\mathcal{Z}(\mathcal{C})$.
  We also develop the theory of integrals and the Fourier transform in a unimodular finite tensor category by using the same adjunction.
  Our main result is that the map $\mathsf{ch}: \mathsf{Gr}_k(\mathcal{C}) \to \mathsf{CF}(\mathcal{C})$ given by taking the internal character is a well-defined injective homomorphism of $k$-algebras,
  where $\mathsf{Gr}_k(\mathcal{C})$ is the scalar extension of the Grothendieck ring of $\mathcal{C}$ to $k$.
  Moreover, under the assumption that $\mathcal{C}$ is unimodular, the map $\mathsf{ch}$ is an isomorphism if and only if $\mathcal{C}$ is semisimple.

  As an application, we show that the algebra $\mathsf{Gr}_{k}(\mathcal{C})$ is semisimple if $\mathcal{C}$ is a non-degenerate pivotal fusion category.
  If, moreover, $\mathsf{Gr}_k(\mathcal{C})$ is commutative, then we define the character table of $\mathcal{C}$ based on the integral theory.
  It turns out that the character table is obtained from the $S$-matrix if $\mathcal{C}$ is a modular tensor category.
  Generalizing corresponding results in the finite group theory, we prove the orthogonality relations and the integrality of the character table.
\end{abstract}

\maketitle

\section{Introduction}

The character theory is an important subject in the theory of groups and Hopf algebras.
Since many results on Hopf algebras have been generalized and understood in the setting of tensor categories,
it is interesting to find a category-theoretical counterpart of the character theory.
The aim of this paper is to propose and develop such a framework.

To explain what we do in this paper, we first consider a representation $V$ of a finite group $G$ over a field $k$.
The character $\chi_V: G \to k$ is defined by taking the trace of the action of an element of $G$ on $V$.
The basic property of the trace implies that $\chi_V$ is in fact a class function on $G$.
In representation-theoretic terms, this fact can be said that the linear map $\chi_V: k G \to k$ is a homomorphism of $k G$-modules if we view $A := k G$ and $k$ as the adjoint representation and the trivial representation, respectively.
From the viewpoint of the theory of tensor categories, the trivial representation is the unit object.
A counterpart of the adjoint representation may not be obvious at the first glance.
To see what it should be, we remark that the adjoint representation of a Hopf algebra $H$ arises from an adjunction between the category of $H$-modules and the category of Yetter-Drinfeld $H$-modules.
This observation suggests that a category-theoretical counterpart of the adjoint representation can be defined in terms of the monoidal center.

A {\em finite tensor category} \cite{MR2119143} is a class of tensor categories including the representation category of a finite-dimensional Hopf algebra.
In this paper, we initiate the ``internal character theory'' based on the above idea.
More precisely, let $\mathcal{C}$ be a finite tensor category over an algebraically closed field $k$,
and let $\mathcal{Z}(\mathcal{C})$ be its monoidal center.
Then the forgetful functor $U: \mathcal{Z}(\mathcal{C}) \to \mathcal{C}$ has a right adjoint, say $R$.
We define the {\em adjoint algebra} $A \in \mathcal{C}$ by $A = U R(\unitobj)$, where $\unitobj \in \mathcal{C}$ is the unit object.
It turns out that the adjoint algebra is a generalization of the adjoint representation of a Hopf algebra (which is in fact a module algebra over the Hopf algebra).
Thus we call
\begin{equation*}
  \CE(\mathcal{C}) := \Hom_{\mathcal{C}}(\unitobj, A)
  \quad \text{and} \quad
  \CF(\mathcal{C}) := \Hom_{\mathcal{C}}(A, \unitobj)
\end{equation*}
the {\em space of central elements} and the {\em space of class functions}, respectively.
A certain universal property of the adjoint algebra yields an isomorphism
\begin{equation}
  \label{eq:intro-CE-iso}
  \CE(\mathcal{C}) \cong \End(\id_{\mathcal{C}})
\end{equation}
of vector spaces. By the definition of $R$, we also have an isomorphism
\begin{equation}
  \label{eq:intro-CF-iso}
  \CF(\mathcal{C})
  = \Hom_{\mathcal{C}}(U R(\unitobj), \unitobj)
  \cong \Hom_{\mathcal{Z}(\mathcal{C})}(R(\unitobj), R(\unitobj))
  = \End_{\mathcal{Z}(\mathcal{C})}(R(\unitobj))
\end{equation}
of vector spaces. Thus we can introduce algebra structures on $\CE(\mathcal{C})$ and $\CF(\mathcal{C})$ so that \eqref{eq:intro-CE-iso} and~\eqref{eq:intro-CF-iso} are isomorphisms of algebras.

Like the adjoint representation of a Hopf algebra, the algebra $A$ acts on every object of $\mathcal{C}$.
Thus, if $\mathcal{C}$ has a pivotal structure, we can define the {\em internal character} $\mathsf{ch}(X): A \to \unitobj$ of $X \in \mathcal{C}$ to be the partial pivotal trace of the action $A \otimes X \to X$ (this definition basically agrees with that given in \cite{MR3146014} when $\mathcal{C}$ has a ribbon structure; see Remark~\ref{rem:coend-Hopf}).
Now let $\Gr(\mathcal{C})$ be the Grothendieck ring of $\mathcal{C}$,
and put $\Gr_k(\mathcal{C}) := k \otimes_{\mathbb{Z}} \Gr(\mathcal{C})$.
Our first main result implies that the linear map
\begin{equation}
  \label{eq:intro-ch-map}
  \mathsf{ch}: \Gr_k(\mathcal{C}) \to \CF(\mathcal{C}),
  \quad [X] \mapsto \mathsf{ch}(X)
  \quad (X \in \mathcal{C})
\end{equation}
is a well-defined injective algebra map (Theorem~\ref{thm:irr-ch-indep} and its corollary).
This map is not surjective in general.
Our second main result is that, under the assumption that $\mathcal{C}$ is {\em unimodular} in the sense of \cite{MR2097289}, there holds:
\begin{equation}
  \label{eq:intro-ch-unimo}
  \text{$\mathsf{ch}: \Gr_k(\mathcal{C}) \to \CF(\mathcal{C})$ is an isomorphism}
  \iff
  \text{$\mathcal{C}$ is semisimple}
\end{equation}
(Theorem~\ref{thm:unimo-gr}).
Thus, if $\mathcal{C}$ is not semisimple, there is an element of $\CF(\mathcal{C})$ that cannot be expressed as a linear combination of internal characters.
This result seems to suggest the importance of the assumption of unimodularity in the study of finite tensor categories,
taking into account the fact that \eqref{eq:intro-ch-unimo} does not hold in general without the unimodularity.

Our proof of these results is based on the fact that $\mathcal{Z}(\mathcal{C})$ can be identified with the category of modules over a certain Hopf monad on $\mathcal{C}$ \cite{MR2342829,MR2869176}.
For the proof, we also set up the integral theory and define the Fourier transform for a unimodular finite tensor category.
These techniques are of independent interest as generalizations of corresponding results on Hopf algebras given in \cite{MR1265853,MR2349620,MR2464107,MR2863455}.

A {\em fusion category} \cite{MR2183279} is a semisimple finite tensor category and one of well-studied classes of tensor categories.
In this paper, we give some applications of our results to the theory of fusion categories.
If $\mathcal{C}$ is a pivotal fusion category, then the map \eqref{eq:intro-ch-map} is an isomorphism.
Hence, by~\eqref{eq:intro-CF-iso}, we have an isomorphism
\begin{equation}
  \label{eq:intro-Gr-C-iso}
  \Gr_k(\mathcal{C}) \cong \End_{\mathcal{Z}(\mathcal{C})}(R(\unitobj))
\end{equation}
of algebras. This result allows us to study the Grothendieck algebra of a fusion category via the monoidal center. For example, we prove a conjecture of Ostrik \cite{2015arXiv150301492O} stating that
$\Gr_k(\mathcal{C})$ is semisimple if and only if $\mathcal{C}$ is non-degenerate in the sense of \cite{MR2183279} (Theorem~\ref{thm:Gro-alg-ss}). See below for other results.

The existence of an isomorphism such as \eqref{eq:intro-Gr-C-iso} has been showed by Ostrik \cite{2013arXiv1309.4822O} in the case where $k$ is of characteristic zero.
Our construction of the isomorphism is somewhat more canonical and works as well in the case of positive characteristic.
We also note that Neshveyev and Yamashita \cite{2015arXiv150107390N} have established such an isomorphism for $C^*$-tensor categories possibly with infinitely many isomorphism classes of simple objects.
Instead of mentioning such $C^*$-tensor categories,
we concern applications to finite tensor categories and fusion categories over a field of arbitrary characteristic.
Our approach is essentially same as \cite{2015arXiv150107390N} if it is limited to $C^*$-fusion categories.

\subsection*{Organization of this paper}

This paper is organized as follows:
In Section~\ref{sec:preliminaries}, we recall some basic results on monoidal categories that will be used throughout this paper.

In Section~\ref{sec:internal-ch}, we first recall the construction of the central Hopf (co)monad on a rigid monoidal category, which plays a crucial role in this paper.
Provided that certain (co)ends exists in a rigid monoidal category $\mathcal{C}$, a right adjoint of the forgetful functor $U: \mathcal{Z}(\mathcal{C}) \to \mathcal{C}$ is expressed by the central Hopf comonad $\bar{Z}$ on $\mathcal{C}$.
We establish isomorphisms~\eqref{eq:intro-CE-iso} and~\eqref{eq:intro-CF-iso} in terms of the central Hopf comonad.
Finally, we consider the case where $\mathcal{C}$ is the representation category of a Hopf algebra
and explain in detail what our results say (such an additional explanation is provided each of Sections~\ref{sec:indep} and \ref{sec:integral}).

In Section~\ref{sec:indep}, we prove the injectivity of the map \eqref{eq:intro-ch-map} for a pivotal finite tensor category $\mathcal{C}$.
For the proof, we require a certain algebra $A^{\mathrm{ss}} \in \mathcal{C}$, which corresponds to the quotient of a finite-dimensional Hopf algebra $H$ by the Jacobson radical if $\mathcal{C}$ is the representation category of $H$.
We show that there is an epimorphism $q: A \to A^{\mathrm{ss}}$ and every internal character factors through $q$.
Then the proof of the injectivity goes along the same line as the proof of the linear independence of irreducible characters of a finite-dimensional Hopf algebra.

Techniques used in Section~\ref{sec:indep} seem to be essential when we deal with certain form of coends. With motivation coming from the recent study of conformal field theories \cite{MR3289342,MR3146014}, in Section~\ref{sec:indep}, we also compute the composition factors of such a coend by applying our techniques (Theorem~\ref{thm:comp-factor-coends}).

In Section~\ref{sec:integral}, we introduce the notions of an integral, a cointegral and the Fourier transformation in a unimodular finite tensor category by using a characterization of the unimodularity in terms of the monoidal center \cite{2014arXiv1402.3482S}. By utilizing these tools, we prove not only the assertion \eqref{eq:intro-ch-unimo}, but also a Maschke-type theorem and a Radford-type trace formula.

In Section~\ref{sec:appl-fusion}, we give applications of our results to fusion categories.
We first prove the above-mentioned conjecture of Ostrik.
We also show that, for a non-degenerate pivotal fusion category $\mathcal{C}$, the algebra $\Gr_k(\mathcal{C})$ is commutative if and only if $R(\unitobj)$ is multiplicity-free (Theorem~\ref{thm:Gro-alg-comm}).
Finally, by using the Fourier transform, we define the character table and related notions of a pivotal fusion category $\mathcal{C}$ such that $\Gr_k(\mathcal{C})$ is commutative.
Our definitions generalize Cohen and Westreich's \cite{MR2863455}.
We prove some results on the character table, such as the orthogonality relations and the cyclotomic integrality.
If $\mathcal{C}$ is a modular tensor category, then its character table is obtained from the $S$-matrix (Example~\ref{ex:ch-t-MTC}).

\subsection*{Acknowledgments}

The author thanks Victor Ostrik and Makoto Yamashita for letting him know about \cite{2013arXiv1309.4822O} and \cite{2015arXiv150107390N}, respectively, and giving him some valuable comments.
The author also thanks Takahiro Hayashi for valuable comments and suggestions.
The author appreciates the referee for careful reading of the manuscript.
The author is currently supported by JSPS KAKENHI Grant Number JP16K17568.

\section{Preliminaries}
\label{sec:preliminaries}

\subsection{Monoidal categories}

For the basic theory of monoidal categories, we refer the reader to \cite{MR1712872} and \cite{MR1321145}. In this paper, all monoidal categories are assumed to be strict. For a monoidal category $\mathcal{C} = (\mathcal{C}, \otimes, \unitobj)$ with tensor product $\otimes$ and unit object $\unitobj$, we set $\mathcal{C}^{\op} = (\mathcal{C}^{\op}, \otimes, \unitobj)$ and $\mathcal{C}^{\rev} = (\mathcal{C}, \otimes^{\rev}, \unitobj)$, where $\otimes^{\rev}$ is the reversed tensor product given by $X \otimes^{\rev} Y = Y \otimes X$.

A {\em monoidal functor} \cite[XI.2]{MR1712872} is a functor $F: \mathcal{C} \to \mathcal{D}$ between monoidal categories $\mathcal{C}$ and $\mathcal{D}$
endowed with a morphism $F_0: \unitobj \to F(\unitobj)$ in $\mathcal{C}$ and a natural transformation
$F_2(X, Y): F(X) \otimes F(Y) \to F(X \otimes Y)$ ($X, Y \in \mathcal{C}$)
satisfying a certain coherence condition. We say that a monoidal functor $(F, F_2, F_0)$ is {\em strong} if $F_0$ and $F_2$ are invertible, and {\em strict} if they are identities.
A {\em comonoidal functor} from $\mathcal{C}$ to $\mathcal{D}$ is the same thing as a monoidal functor from $\mathcal{C}^{\op}$ to $\mathcal{D}^{\op}$.

A {\em left dual object} of $X \in \mathcal{C}$ is a triple $(X', e, d)$ consisting of an object $X' \in \mathcal{C}$ and morphisms $e: X' \otimes X \to \unitobj$ and $d: \unitobj \to X \otimes X'$ such that
\begin{equation*}
  (e \otimes \id_{X'}) \circ (\id_{X'} \otimes d) = \id_{X'}
  \quad \text{and} \quad
  (\id_{X} \otimes e) \circ (d \otimes \id_{X}) = \id_{X}.
\end{equation*}
Suppose that every object of $\mathcal{C}$ has a left dual object (we say that $\mathcal{C}$ is {\em left rigid} if this is the case). For each $X \in \mathcal{C}$, we choose a left dual object
\begin{equation*}
  (X^*, \ \eval_X: X^* \otimes X \to \unitobj, \ \coev_X: \unitobj \to X \otimes X^*)
\end{equation*}
of $X \in \mathcal{C}$. Then the assignment $X \mapsto X^*$ gives rise to a strong monoidal functor $(-)^*: \mathcal{C} \to \mathcal{C}^{\op,\rev}$, which we call the {\em left duality functor}.

A {\em rigid monoidal category} is a monoidal category $\mathcal{C}$ such that both $\mathcal{C}$ and $\mathcal{C}^{\rev}$ are left rigid. If $\mathcal{C}$ is a rigid monoidal category, then the left duality functor $(-)^*$ of $\mathcal{C}$ is an equivalence. A quasi-inverse of $(-)^*$, denoted by ${}^*(-)$, is called the {\em right duality functor}. As explained in \cite[Lemma 5.4]{2013arXiv1309.4539S}, we may assume that $(-)^*$ and ${}^*(-)$ are strict monoidal and mutually inverse to each other. Thus,
\begin{equation*}
  (X \otimes Y)^* = Y^* \otimes X^*,
  \quad \unitobj^* = \unitobj,
  \quad \text{and} \quad ({}^*X)^* = X = {}^*(X^*).
\end{equation*}

\subsection{Pivotal monoidal category}

A {\em pivotal structure} of a rigid monoidal category $\mathcal{C}$ is an isomorphism $j: \id_{\mathcal{C}} \to (-)^{**}$ of monoidal functors.
A {\em pivotal monoidal category} is a rigid monoidal category endowed with a pivotal structure.
Now let $\mathcal{C}$ be a pivotal monoidal category with pivotal structure $j$.
Throughout this paper, we use the following notation:
\begin{equation}
  \label{eq:eval-tilde-def}
  \widetilde{\eval}_X := \eval_{X^*} \circ (j_X \otimes \id_{X^*})
  \quad (X \in \mathcal{C}).
\end{equation}
Let $A, B, X \in \mathcal{C}$ be objects. For a morphism $f: A \otimes X \to B \otimes X$, the (right) {\em partial pivotal trace} of $f$ is defined and denoted by
\begin{equation*}
  \trace_{A,B}^X(f) = (\id_B \otimes \widetilde{\eval}_{X}) \circ (f \otimes \id_{X^*}) \circ (\id_A \otimes \coev_X)
  \in \Hom_{\mathcal{C}}(A, B).
\end{equation*}
If $A = B = \unitobj$, then we write $\trace_{A,B}^X(f) \in \End_{\mathcal{C}}(\unitobj)$ simply as $\trace(f)$ and call it the (right) {\em pivotal trace} of $f$. The (right) {\em pivotal dimension} of $X \in \mathcal{C}$ is defined by
\begin{equation*}
  \dim(X) := \trace(\id_X).
\end{equation*}

\subsection{The monoidal center}

For a monoidal category $\mathcal{C}$, the {\em monoidal center} (or the {\em Drinfeld center}) of $\mathcal{C}$ is a category $\mathcal{Z}(\mathcal{C})$ defined as follows:
An object of $\mathcal{Z}(\mathcal{C})$ is a pair $(V, \sigma)$ consisting of an object $V \in \mathcal{C}$ and a natural isomorphism
\begin{equation}
  \label{eq:left-half-br}
  \sigma_X: V \otimes X \to X \otimes V \quad (X \in \mathcal{C})
\end{equation}
satisfying a part of the hexagon axiom.
A morphism $f: (V, \sigma) \to (V', \sigma')$ in $\mathcal{Z}(\mathcal{C})$ is a morphism $f: V \to V'$ in $\mathcal{C}$ such that
$(\id_X \otimes f) \circ \sigma_X = \sigma'_X \circ (f \otimes \id_X)$
for all $X \in \mathcal{C}$.
The composition is defined in an obvious way.

A {\em braiding} \cite[XIII.1]{MR1321145} of a monoidal category $(\mathcal{B}, \otimes_{\mathcal{B}}, \unitobj)$ is a natural isomorphism $\sigma: \otimes_{\mathcal{B}}^{} \to \otimes_{\mathcal{B}}^{\rev}$ satisfying the hexagon axiom,
and a {\em braided monoidal category} is a monoidal category endowed with a braiding.
The category $\mathcal{Z}(\mathcal{C})$ is in fact a braided monoidal category; see, {\it e.g.}, \cite[XIII.3]{MR1321145} for details.

\begin{remark}
  Replacing \eqref{eq:left-half-br} with ``$\sigma_X: X \otimes V \to V \otimes X$ ($X \in \mathcal{C}$)'', one can define another version of the monoidal center, which we denote by $\mathcal{Z}_r(\mathcal{C})$.
  The braided monoidal category introduced in \cite[XIII.3]{MR1321145} under the name ``center'' is in fact $\mathcal{Z}_r(\mathcal{C})$.
  Given a braided monoidal category $\mathcal{B} = (\mathcal{B}, \otimes, \unitobj, \sigma)$, we set
  \begin{equation*}
    \mathcal{B}^{\rev} = (\mathcal{B}, \otimes^{\rev}, \unitobj, \sigma^{\rev})
    \quad \text{and} \quad
    \mathcal{B}^{\mir} = (\mathcal{B}, \otimes, \unitobj, \sigma^{\mir}),
  \end{equation*}
  where $\sigma^{\rev}_{X,Y} = \sigma_{Y,X}^{}$ and $\sigma^{\mir}_{X,Y} = \sigma_{Y,X}^{-1}$. Then
  we have $\mathcal{Z}(\mathcal{C}) = \mathcal{Z}_r(\mathcal{C}^{\rev})^{\rev,\mir}$.
\end{remark}

\subsection{Hopf monads}

Let $\mathcal{C}$ be a monoidal category. A {\em bimonad} \cite{MR2355605} on $\mathcal{C}$ is a monad $(T, \mu, \eta)$ on $\mathcal{C}$ endowed with a comonoidal structure
\begin{equation*}
  T_0: T(\unitobj) \to \unitobj
  \quad \text{and} \quad
  T_2(V, W): T(V \otimes W) \to T(V) \otimes T(W)
  \quad (V, W \in \mathcal{C})
\end{equation*}
such that the multiplication $\mu: T^2 \to T$ and the unit $\eta: \id_{\mathcal{C}} \to T$ of the monad $T$ are comonoidal natural transformations. If $T$ is a bimonad on $\mathcal{C}$, then the category ${}_T \mathcal{C}$ of $T$-modules ($=$ the Eilenberg-Moore category of $T$ \cite[VI.2]{MR1712872}) is a monoidal category in such way that the forgetful functor $U: {}_T \mathcal{C} \to \mathcal{C}$ is a strong monoidal functor.

Now suppose that $\mathcal{C}$ is a rigid monoidal category. Then a {\em Hopf monad} \cite{MR2355605} on $\mathcal{C}$ is a bimonad $T$ on $\mathcal{C}$ endowed with natural transformations
\begin{equation*}
  S_V^{(\ell)}: T(T(V)^*) \to V^*
  \quad \text{and} \quad
  S_V^{(r)}: T({}^*T(V)) \to {}^*V
  \quad (V \in \mathcal{C}),
\end{equation*}
called the left antipode and the right antipode, respectively, satisfying certain conditions. If $T$ is a Hopf monad on $\mathcal{C}$, then the monoidal category ${}_T \mathcal{C}$ is rigid.

\subsection{Ends and coends}

Let $\mathcal{C}$ and $\mathcal{V}$ be categories, and let $S, T: \mathcal{C}^{\op} \times \mathcal{C} \to \mathcal{V}$ be functors. A {\em dinatural transformation} $\xi: S \xrightarrow{..} T$ is a family
\begin{equation*}
  \xi = \{ \xi_{X}: S(X,X) \to T(X,X) \}_{X \in \mathcal{C}}
\end{equation*}
of morphisms in $\mathcal{V}$ such that
\begin{equation*}
  T(\id_X, f) \circ \xi_X \circ S(f, \id_X) = T(f, \id_Y) \circ \xi_Y \circ S(\id_Y, f)
\end{equation*}
for all morphism $f: X \to Y$ in $\mathcal{C}$. An {\em end} of $S$ is a pair $(E, p)$ consisting of an object $E \in \mathcal{V}$ (regarded as a constant functor from $\mathcal{C}^{\op} \times \mathcal{C}$ to $\mathcal{V}$) and a dinatural transformation $p: E \xrightarrow{..} S$ that enjoys the following universal property:
For any pair $(E', p')$ consisting of an object $E' \in \mathcal{V}$ and a dinatural transformation $p': E' \xrightarrow{..} S$, there exists a unique morphism $\phi: E' \to E$ in $\mathcal{V}$ such that $p'_X = p_X \circ \phi$ for all $X \in \mathcal{C}$. A {\em coend} of $T$ is a pair $(C, i)$ consisting of an object $C \in \mathcal{V}$ and a dinatural transformation $i: T \xrightarrow{..} C$ having a similar universal property. The end of $S$ and the coend of $T$ are expressed as
\begin{equation*}
  \int_{X \in \mathcal{C}} S(X,X)
  \quad \text{and} \quad
  \int^{X \in \mathcal{C}} T(X,X),
\end{equation*}
respectively. See \cite[IX]{MR1712872} for the basic results on (co)ends.

\begin{lemma}
  \label{lem:coend-adj-1}
  Let $\mathcal{B}$, $\mathcal{C}$ and $\mathcal{V}$ be categories, and let $F: \mathcal{B} \to \mathcal{C}$ be a functor that has a right adjoint $G$. Then, for any functor $\Phi: \mathcal{C}^{\op} \times \mathcal{C} \to \mathcal{V}$, we have
  \begin{equation}
    \label{eq:coend-iso-1}
    \int^{X \in \mathcal{C}} \Phi(X, F G(X)) \cong \int^{V \in \mathcal{B}} \Phi(F(V), F(V)),
  \end{equation}
  meaning that if either one of coends exists, then the other one exists and there is a canonical isomorphism between them.
\end{lemma}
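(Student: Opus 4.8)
The plan is to construct the isomorphism \eqref{eq:coend-iso-1} directly from the universal properties of the two coends, using the adjunction $F \dashv G$ to translate dinatural wedges on one side into dinatural wedges on the other. Write $\varepsilon: FG \to \id_{\mathcal{C}}$ and $\eta: \id_{\mathcal{B}} \to GF$ for the counit and unit of the adjunction, satisfying the triangle identities. Denote by $(C, i)$ a coend of the functor $(V, W) \mapsto \Phi(F(W), F(V))$ on $\mathcal{B}^{\op} \times \mathcal{B}$ (assuming it exists; the other direction is symmetric), with universal wedge $i_V: \Phi(F(V), F(V)) \to C$. The first step is to exhibit a dinatural transformation from $X \mapsto \Phi(X, FG(X))$ to the constant object $C$. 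Define
\begin{equation*}
  j_X := i_{G(X)} \circ \Phi(\varepsilon_X, \id_{FG(X)}) : \Phi(X, FG(X)) \to \Phi(FG(X), FG(X)) \to C.
\end{equation*}
Checking that $\{ j_X \}_{X \in \mathcal{C}}$ is dinatural in $X$ is a routine diagram chase: one expands the dinaturality hexagon for $j$ along a morphism $f: X \to Y$ in $\mathcal{C}$, uses functoriality of $\Phi$ and naturality of $\varepsilon$ to rewrite everything in terms of $FG(f)$, and then invokes the dinaturality of $i$ along the morphism $G(f): G(X) \to G(Y)$ in $\mathcal{B}$.

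The second step is the reverse construction. Given the universal wedge $p_X: \Phi(X, FG(X)) \to E$ of a putative coend $(E, p)$ of the left-hand functor, define a candidate wedge on the $\mathcal{B}$-side by
\begin{equation*}
  q_V := p_{F(V)} \circ \Phi(\id_{F(V)}, F(\eta_V)) : \Phi(F(V), F(V)) \to \Phi(F(V), FGF(V)) \to E,
\end{equation*}
and verify dinaturality of $\{ q_V \}_{V \in \mathcal{B}}$ in $V$; again this reduces, via naturality of $\eta$ and functoriality of $\Phi$, to the dinaturality of $p$ along $F$ applied to a morphism of $\mathcal{B}$. The key point making the two constructions mutually inverse is the pair of triangle identities: when one composes $j$ followed by the factoring map obtained from $q$ (or vice versa), the resulting endomorphism is computed on each wedge component, and the composites $\varepsilon_{F(V)} \circ F(\eta_V) = \id_{F(V)}$ and $G(\varepsilon_X) \circ \eta_{G(X)} = \id_{G(X)}$ collapse it to the identity. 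By uniqueness in the universal property, the induced map $E \to C$ (when $E$ exists) is an isomorphism with inverse the map $C \to E$ coming from the other construction; symmetrically, if $C$ exists then the pair $(C, j)$ satisfies the universal property defining the left-hand coend. Either way, the two coends exist simultaneously and the canonical comparison map is invertible.

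I expect no serious obstacle here: the statement is a standard Fubini-type/adjunction manipulation for (co)ends, and the only real work is bookkeeping in the dinaturality verifications. The one point deserving mild care is that the isomorphism must be shown to be \emph{canonical}, i.e.\ realized by a specific comparison morphism independent of the chosen wedges; this follows because both $j$ and $q$ are defined purely from the adjunction data $(\varepsilon, \eta)$ and the functor $\Phi$, so the induced comparison is uniquely determined by the universal properties. One could alternatively phrase the whole argument as an isomorphism of the respective categories of wedges (cones under $\Phi$), which makes the naturality manifest and avoids choosing representatives altogether; I would likely present it in that slightly more structural form to keep the diagram chases to a minimum.
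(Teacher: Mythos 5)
Your proposal is correct, and your comparison map is exactly the one the paper uses: the relation $j_X = i_{G(X)} \circ \Phi(\varepsilon_X, \id_{FG(X)})$ defining your wedge is precisely the defining relation for the paper's $\phi_{12}$ (the paper itself does not prove the lemma but cites it as a special case of Brugui\`eres--Virelizier, Lemma 3.9, and only records this formula). Your explicit inverse built from the unit $\eta$, together with the triangle-identity cancellations $\varepsilon_{F(V)} \circ F(\eta_V) = \id$ and $G(\varepsilon_X) \circ \eta_{G(X)} = \id$, correctly supplies the verification that the paper leaves to the reference.
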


This is a special case of \cite[Lemma 3.9]{MR2869176}. We recall how to define \eqref{eq:coend-iso-1} from its proof.
Let $C_1$ and $C_2$ denote the left hand side and the right hand side of \eqref{eq:coend-iso-1}, respectively. The isomorphism $\phi_{1 2}: C_1 \to C_2$ mentioned in the above lemma is a unique morphism such that
\begin{equation*}
  \phi_{1 2} \circ i^{(1)}_X = i^{(2)}_{G(X)} \circ \Phi(\varepsilon_{X}, \id_{F G(X)})
\end{equation*}
for all $X \in \mathcal{C}$, where $i^{(a)}$ ($a = 1, 2$) is the universal dinatural transformation to $C_a$ and $\varepsilon: F G \to \id_{\mathcal{C}}$ is the counit of the adjunction.

We now assume that the coend $C_3 = \int^{X \in \mathcal{C}} \Phi(X, X)$ also exists. By the universal property, there exists a unique morphism $\phi_{2 3}: C_2 \to C_3$ such that
\begin{equation*}
  \phi_{2 3} \circ i_V^{(2)} = i_{F(V)}^{(3)}
\end{equation*}
for all $V \in \mathcal{B}$, where $i^{(3)}$ is the universal dinatural transformation to $C_3$. Thus
\begin{align*}
  \phi_{2 3} \circ \phi_{1 2} \circ i_{X}^{(1)}
  & = \phi_{2 3} \circ i^{(2)}_{G(X)} \circ \Phi(\varepsilon_{X}, \id_{F G(X)}) \\
  & = i_{F G(X)}^{(3)} \circ \Phi(\varepsilon_{X}, \id_{F G(X)}) \\
  & = i_{X}^{(3)} \circ \Phi(\id_X, \varepsilon_{X})
\end{align*}
for all $X \in \mathcal{C}$, where the third equality follows from the dinaturality of $i^{(3)}$. Summarizing the above argument, we have the following conclusion:

\begin{lemma}
  \label{lem:coend-adj-2}
  With the above notation, the composition
  \begin{equation*}
    \int^{X \in \mathcal{C}} \Phi(X, F G(X))
    \xrightarrow{\quad \phi_{1 2} \quad} \int^{V \in \mathcal{B}} \Phi(F(V), F(V))
    \xrightarrow{\quad \phi_{2 3} \quad} \int^{X \in \mathcal{C}} \Phi(X, X)
  \end{equation*}
  coincides with the morphism
  \begin{equation*}
    \int^{X \in \mathcal{C}} \Phi(\id_X, \varepsilon_X):
    \int^{X \in \mathcal{C}} \Phi(X, F G(X))
    \to \int^{X \in \mathcal{C}} \Phi(X, X).
  \end{equation*}
\end{lemma}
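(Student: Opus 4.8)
The plan is to notice that the three-line computation displayed immediately before the statement already does the essential work, so that only a uniqueness argument remains. First I would make precise the morphism named in the statement. Since $\varepsilon: F G \to \id_{\mathcal{C}}$ is natural, the family $\{\Phi(\id_X, \varepsilon_X)\}_{X \in \mathcal{C}}$ is a natural transformation of functors $\mathcal{C}^{\op} \times \mathcal{C} \to \mathcal{V}$ from $\Phi(-, F G(-))$ to $\Phi(-,-)$. Composing its components with the universal dinatural transformation $i^{(3)}$ of the coend $C_3 = \int^{X \in \mathcal{C}} \Phi(X, X)$ yields a family
\[
  g_X := i^{(3)}_X \circ \Phi(\id_X, \varepsilon_X) : \Phi(X, F G(X)) \to C_3 \qquad (X \in \mathcal{C}).
\]
A routine diagram chase shows that $\{g_X\}_X$ is a dinatural transformation from $\Phi(-, F G(-))$ to the constant functor $C_3$: after cancelling a common factor of the form $\Phi(\id, \varepsilon)$, the square to be checked reduces exactly to the dinaturality square of $i^{(3)}$, the naturality of $\varepsilon$ being what licenses that cancellation. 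Hence the universal property of $C_1 = \int^{X \in \mathcal{C}} \Phi(X, F G(X))$ produces a unique morphism $\int^{X \in \mathcal{C}} \Phi(\id_X, \varepsilon_X): C_1 \to C_3$ with $\bigl(\int^{X \in \mathcal{C}} \Phi(\id_X, \varepsilon_X)\bigr) \circ i^{(1)}_X = g_X$ for all $X$; this is the morphism named in the statement.

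It then remains only to observe that $\phi_{2 3} \circ \phi_{1 2}$ enjoys the very same property. But this is precisely the content of the computation preceding the lemma, which establishes $\phi_{2 3} \circ \phi_{1 2} \circ i^{(1)}_X = i^{(3)}_X \circ \Phi(\id_X, \varepsilon_X) = g_X$ for every $X \in \mathcal{C}$ (the last step there being the dinaturality of $i^{(3)}$ applied to $\varepsilon_X: F G(X) \to X$). By the uniqueness clause in the universal property of the coend $C_1$, we conclude $\phi_{2 3} \circ \phi_{1 2} = \int^{X \in \mathcal{C}} \Phi(\id_X, \varepsilon_X)$, as claimed.

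The argument is thus essentially formal, and the only point deserving real care is the verification that $\{g_X\}_X$ is dinatural \emph{as a wedge out of $\Phi(-, F G(-))$} rather than out of $\Phi(-,-)$ --- this is the single place where the dinaturality of $i^{(3)}$ and the naturality of $\varepsilon$ must be used together, and it is also what guarantees that the symbol $\int^{X \in \mathcal{C}} \Phi(\id_X, \varepsilon_X)$ refers to a well-defined morphism with source $C_1$. Everything else is a direct appeal to the universal properties of the three coends recalled in the excerpt.
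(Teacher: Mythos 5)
Your proposal is correct and takes essentially the same route as the paper: the displayed computation preceding the lemma shows $\phi_{2 3} \circ \phi_{1 2} \circ i^{(1)}_X = i^{(3)}_X \circ \Phi(\id_X, \varepsilon_X)$ for all $X$, and the uniqueness clause in the universal property of the coend $C_1$ then forces the two morphisms to coincide. Your explicit check that $\{\, i^{(3)}_X \circ \Phi(\id_X, \varepsilon_X) \,\}_X$ is a dinatural wedge out of $\Phi(-, F G(-))$ is just the standard well-definedness verification that the paper leaves implicit.
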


\subsection{Conjugation by the duality functor}
\label{subsec:adj-rig-mon}

For a functor $T: \mathcal{B} \to \mathcal{C}$ between rigid monoidal categories, we define $T^!: \mathcal{B} \to \mathcal{C}$ by
\begin{equation*}
  T^!(V) = T({}^* V)^*
  \quad (V \in \mathcal{B}).
\end{equation*}
A natural transformation $\alpha: S \to T$ between $S, T: \mathcal{B} \to \mathcal{C}$ induces
\begin{equation*}
  \alpha^!_V:
  T^!(V) = T({}^* V)^*  \xrightarrow{\quad (\alpha_{{}^*V})^* \quad} S({}^*V)^* = S^!(V)
  \quad (V \in \mathcal{B}).
\end{equation*}
The operation $(-)^!$ preserves the composition of functors between rigid monoidal categories and the horizontal composition of natural transformations, but reverses the vertical composition of natural transformations. Moreover, this operation is invertible: The inverse is given by $T \mapsto {}^{!}T$, where ${}^{!}T(V) = {}^*T(V^*)$. By the above observation, one easily proves:
\begin{enumerate}
\item $F: \mathcal{B} \to \mathcal{C}$ is a monoidal functor if and only if $F^!$ is comonoidal.
\item $T: \mathcal{C} \to \mathcal{C}$ is a monad on $\mathcal{C}$ if and only if $T^!$ is a comonad on $\mathcal{C}$.
  Moreover, if $T$ is a monad, then there is a category isomorphism between the categories of $T$-modules and of $T^!$-comodules.
\item Let $F: \mathcal{B} \to \mathcal{C}$ and $G: \mathcal{C} \to \mathcal{B}$ be functors. Then $F \dashv G$ ({\it i.e.}, $F$ is left adjoint to $G$) if and only if $G^! \dashv F^!$.
\end{enumerate}
Now suppose that $F: \mathcal{B} \to \mathcal{C}$ is a strong monoidal functor.
Since $F$ commutes with the duality functor, we have $F^! \cong F$.
Thus, for functors $L, R: \mathcal{C} \to \mathcal{B}$, we have
\begin{equation*}
  L \dashv F \iff F \dashv L^!
  \quad \text{and} \quad
  F \dashv R \iff R^! \dashv F.
\end{equation*}

\begin{remark}
  Some of the above results are found in \cite{MR2355605} and \cite{MR2869176}.
\end{remark}

\section{Internal characters}
\label{sec:internal-ch}

\subsection{The central Hopf monad}

Throughout this section, we let $\mathcal{C}$ be a rigid monoidal category such that the coend
\begin{equation}
  \label{eq:Hopf-monad-Z}
  Z(V) = \int^{X \in \mathcal{C}} X^* \otimes V \otimes X
\end{equation}
exists for all $V \in \mathcal{C}$. Day and Street \cite{MR2342829} showed that the functor $V \mapsto Z(V)$ has a structure of a monad such that the category ${}_Z\mathcal{C}$ of $Z$-modules is canonically isomorphic to the monoidal center $\mathcal{Z}(\mathcal{C})$. By Tannaka reconstruction, the monad $Z$ has a structure of a quasitriangular Hopf monad in the sense of \cite{MR2355605,MR2869176} such that the category isomorphism ${}_Z\mathcal{C} \cong \mathcal{Z}(\mathcal{C})$ is in fact an isomorphism of braided monoidal categories.

We first recall the definition of the Hopf monad $Z$ and the construction of the isomorphism ${}_Z\mathcal{C} \cong \mathcal{Z}(\mathcal{C})$. Let $i_{V;X}: X^* \otimes V \otimes X \to Z(V)$ be the universal dinatural transformation for the coend~\eqref{eq:Hopf-monad-Z}. The comonoidal structure
\begin{equation*}
  Z_0: Z(\unitobj) \to \unitobj
  \quad \text{and} \quad
  Z_2(V, W): Z(V \otimes W) \to Z(V) \otimes Z(W)
  \quad (V, W \in \mathcal{C})
\end{equation*}
of $Z$ is determined by $Z_{0} \circ i_{\unitobj; X} = \eval_X$ and
\begin{equation}
  \label{eq:Hopf-monad-como}
  Z_{2}(V, W) \circ i_{X; V \otimes W}
  = (i_{X;V} \otimes i_{X;W}) \circ (\id_{X^*} \otimes \id_{V} \otimes \coev_X \otimes \id_{W} \otimes \id_{X})
\end{equation}
for all $V, W, X \in \mathcal{C}$. The multiplication of $Z$ is determined by
\begin{equation}
  \label{eq:Hopf-monad-Z-mu}
  \mu_V \circ i^{(2)}_{V; X, Y} = i_{V; X \otimes Y}
\end{equation}
for all $V, X, Y \in \mathcal{C}$, where $i^{(2)}_{V; X, Y} = i_{Z(V); Y} \circ (\id_{Y^*} \otimes i_{V; X} \otimes \id_Y)$.
Finally, we define the unit $\eta: \id_{\mathcal{C}} \to Z$ of $Z$ by $\eta_V = i_{V; \unitobj}$ for $V \in \mathcal{C}$. We omit the description of the left antipode, the right antipode and the universal R-matrix of $Z$ since we will not use them in this paper. See \cite{MR2869176} for details, where, more generally, the quantum double of a Hopf monad is considered.

\begin{definition}
  We call $Z$ the {\em central Hopf monad} on $\mathcal{C}$.
\end{definition}

To establish the isomorphism ${}_Z \mathcal{C} \cong \mathcal{Z}(\mathcal{C})$, we define $\partial_{V,X}$ to be the morphism corresponding to $i_{V;X}$ under the canonical bijection
\begin{equation*}
  \Hom_{\mathcal{C}}(X^* \otimes V \otimes X, Z(V)) \cong \Hom_{\mathcal{C}}(V \otimes X, X \otimes Z(V)).
\end{equation*}
Given a $Z$-module $(M, a)$ with action $a: Z(M) \to M$, we define
\begin{equation*}
  \sigma_X: M \otimes X
  \xrightarrow{\quad \partial_{M,X} \quad}
  X \otimes Z(M)
  \xrightarrow{\quad \id_X \otimes a \quad}
  X \otimes M
  \quad (X \in \mathcal{C}).
\end{equation*}
The assignment $(M, a) \mapsto (M, \sigma)$ gives rise to an isomorphism ${}_Z \mathcal{C} \cong \mathcal{Z}(\mathcal{C})$ of braided monoidal categories.

\subsection{The adjoint algebra}
\label{subsec:adj-obj}

As we have seen, the monoidal center $\mathcal{Z}(\mathcal{C})$ can be identified with the category of modules over the central Hopf monad $Z$ on $\mathcal{C}$. Under the identification, the free $Z$-module functor
\begin{equation}
  \label{eq:Hopf-monad-Z-free}
  L: \mathcal{C} \to \mathcal{Z}(\mathcal{C}),
  \quad V \mapsto (Z(V), \mu_V)
  \quad (V \in \mathcal{C})
\end{equation}
is a left adjoint of the forgetful functor $U: \mathcal{Z}(\mathcal{C}) \to \mathcal{C}$. By the argument of \S\ref{subsec:adj-rig-mon}, the functor $U$ also has a right adjoint, say $R$, which is isomorphic to $L^!$.

By a {\em Hopf comonad} on $\mathcal{C}$, we mean a comonad $T$ on $\mathcal{C}$ endowed with a monoidal structure such that, in a word, $T^{\op}: \mathcal{C}^{\op} \to \mathcal{C}^{\op}$ is a Hopf monad on $\mathcal{C}^{\op}$.
Again by the argument of \S\ref{subsec:adj-rig-mon}, the functor $\bar{Z} := U R$ ($\cong Z^!$) has a structure of a Hopf comonad whose category of comodules is identical to ${}_Z \mathcal{C}$.

\begin{definition}
  \label{def:central-Hopf-comonad}
  We call $\bar{Z}$ and $A := \bar{Z}(\unitobj)$ the {\em central Hopf comonad} on $\mathcal{C}$ and the {\em adjoint algebra} in $\mathcal{C}$, respectively.
\end{definition}

The central Hopf comonad $\bar{Z}$ is more convenient for our applications than the central Hopf monad $Z$.
The reason why we have introduced $Z$ is just a technical one that some useful references, such as \cite{MR2342829,MR2355605}, deal with $Z$.

For simplicity, we assume $R = L^!$ (thus $\bar{Z} = Z^!$).
Then the universal dinatural transformation $i_{V;X}$ for the coend $Z(V)$ induces a morphism
\begin{equation}
  \label{eq:Hopf-comonad-Z-dinat}
  \pi_{V; X}: \bar{Z}(V) \xrightarrow{\quad (i_{{}^* \! V; \, {}^* \! X})^* \quad}
  ({}^*X^* \otimes {}^*V \otimes {}^*X)^* = X \otimes V \otimes X^*
\end{equation}
in $\mathcal{C}$ that is natural in $V \in \mathcal{C}$ and dinatural in $X \in \mathcal{V}$. Since the duality functor is an anti-equivalence, we have
\begin{equation}
  \label{eq:Hopf-comonad-Z}
  \bar{Z}(V) = \int_{X \in \mathcal{C}} X \otimes V \otimes X^*
\end{equation}
with the universal dinatural natural transformation given by~\eqref{eq:Hopf-comonad-Z-dinat}. Thus there is a natural bijection
\begin{equation}
  \label{eq:Hopf-comonad-Z-represent}
  \Hom_{\mathcal{C}}(V, \bar{Z}(W))
  \cong \Nat(V \otimes (-), (-) \otimes W),
\end{equation}
where $\Nat(F, G)$ is the set of natural transformations from $F$ to $G$.

The Hopf comonad structure of $\bar{Z}$ can be described in terms of $\pi$. By~\eqref{eq:Hopf-monad-Z-mu}, the comultiplication $\delta: \bar{Z} \to \bar{Z}^2$ is the unique natural transformation such that
\begin{equation}
  \label{eq:Hopf-comonad-Z-comult}
  (\id_X \otimes \pi_{V; Y} \otimes \id_{X^*}) \circ \pi_{\bar{Z}(V); X} \circ \delta_{V}
  = \pi_{V; X \otimes Y}
\end{equation}
for all $V, X, Y \in \mathcal{C}$. The counit is given by $\varepsilon_{V} = \pi_{V; \unitobj}$. We omit to describe the monoidal structure of $\bar{Z}$, but note that the multiplication $m$ and the unit $u$ of the adjoint algebra $A = \bar{Z}(\unitobj)$ are determined by
\begin{gather}
  \label{eq:adj-obj-unit}
  \pi_{\unitobj; X} \circ u = \coev_X, \\
  \label{eq:adj-obj-mult}
  \pi_{\unitobj; X} \circ m = (\id_X \otimes \eval_X \otimes \id_{X^*}) \circ (\pi_{\unitobj; X} \otimes \pi_{\unitobj; X})
\end{gather}
for all $X \in \mathcal{C}$, respectively.

\begin{definition}
  \label{def:can-act}
  We define the {\em canonical action} $\rho_X: A \otimes X \to X$ ($X \in \mathcal{C}$) to be the natural transformation corresponding to $\id_A$ via the bijection \eqref{eq:Hopf-comonad-Z-represent} with $V = A$ and $W = \unitobj$. More precisely, $\rho_X$ is given by the composition
  \begin{equation}
    \label{eq:can-act-by-pi}
    \rho_X: A \otimes X
    \xrightarrow{\quad \pi_{\unitobj; X} \otimes \id_X \quad}
    X \otimes X^* \otimes X
    \xrightarrow{\quad \id_X \otimes \eval_X \quad}
    X.
  \end{equation}
  By~\eqref{eq:adj-obj-unit} and~\eqref{eq:adj-obj-mult}, we see that $(X, \rho_X)$ is a left $A$-module in $\mathcal{C}$.
\end{definition}

\begin{remark}
  \label{rem:can-act-half-br}
  By definition, $A$ has a natural isomorphism $\sigma: A \otimes (-) \to (-) \otimes A$ such that $(A, \sigma)$ is an object of $\mathcal{Z}(\mathcal{C})$. Explicitly, it is given by
  \begin{equation*}
    \sigma_X = (\id_X \otimes \id_A \otimes \eval_X) \circ (\pi_{\bar{Z}(\unitobj); X} \otimes \id_X) \circ (\delta_{\unitobj} \otimes \id_X)
  \end{equation*}
  for $X \in \mathcal{C}$. By~\eqref{eq:Hopf-comonad-Z-comult}, the canonical action is expressed by $\sigma$ as follows:
  \begin{equation}
    \label{eq:can-act-by-half-br}
    \rho_X = (\id_X \otimes \varepsilon_{\unitobj}) \circ \sigma_X \quad (X \in \mathcal{C}).
  \end{equation}
  We have used \eqref{eq:can-act-by-pi} to define $\rho$, since it is convenient when we discuss the internal characters introduced in later.
  On the other hand, \eqref{eq:can-act-by-half-br} has an advantage that it does not involve the universal dinatural transformation $\pi$.
  Thus, by using~\eqref{eq:can-act-by-half-br}, we can define $A$ and $\rho$ whenever $U: \mathcal{Z}(\mathcal{C}) \to \mathcal{C}$ has a right adjoint even if $\mathcal{C}$ is not rigid (such as the case where $\mathcal{C}$ is the category of all left modules over a Hopf algebra with bijective antipode). In this paper, we do not discuss further generalizations in such a direction.
\end{remark}

\subsection{The monoid of central elements}
\label{subsec:CE}

Till the end of this section, $U$, $R$, $\bar{Z}$, $A$, $m$ and $u$ have the same meaning as in the previous subsection. We now consider the following set defined in terms of the adjoint algebra:

\begin{definition}
  $\CE(\mathcal{C}) := \Hom_{\mathcal{C}}(\unitobj, A)$ is called the set of {\em central elements}.
\end{definition}

Specializing \eqref{eq:Hopf-comonad-Z-represent} to $V = W = \unitobj$, we get a bijection
\begin{equation}
  \label{eq:CE-and-End-id}
  \psi: \CE(\mathcal{C}) \to \End(\id_{\mathcal{C}}),
  \quad \psi(a)_X = \rho_X(a \otimes \id_X)
  \quad (a \in \CE(\mathcal{C}), X \in \mathcal{C})
\end{equation}
({\it cf}. \cite[Proposition 5.2.5]{MR1862634}).
For $a, b \in \CE(\mathcal{C})$, we set $a \cdot b := m \circ (a \otimes b)$. Then the set $\CE(\mathcal{C})$ is a monoid with respect to this operation.
Moreover, the bijection \eqref{eq:CE-and-End-id} is in fact an isomorphism of monoids.

The following operator on $\CE(\mathcal{C})$ will be used in later:

\begin{definition}
  \label{def:antipodal}
  The {\em antipodal operator} is the operator $\mathfrak{S}$ on $\CE(\mathcal{C})$ induced by
  \begin{equation*}
    (-)^!: \End(\id_{\mathcal{C}}) \to \End(\id_{\mathcal{C}}),
    \quad \xi \mapsto \xi^!
  \end{equation*}
  via the bijection~\eqref{eq:CE-and-End-id}. See \S\ref{subsec:adj-rig-mon} for the definition of $\xi^!$.
\end{definition}

\subsection{The monoid of class functions}
\label{subsec:CF}

As we will see in later, the adjoint algebra generalizes the adjoint representation of a group.
Thus it is natural to consider the following set:

\begin{definition}
  $\CF(\mathcal{C}) := \Hom_{\mathcal{C}}(A, \unitobj)$ is called the set of {\em class functions}.
\end{definition}

For class functions $f$ and $g$, we define their product $f \star g \in \CF(\mathcal{C})$ by
\begin{equation}
  \label{eq:class-ft-prod}
  f \star g = f \circ \bar{Z}(g) \circ \delta_{\unitobj},
\end{equation}
where $\delta: \bar{Z} \to \bar{Z}{}^2$ is the comultiplication of $\bar{Z}$. This operation is nothing but the composition of morphisms in the co-Kleisli category of the comonad $\bar{Z}$. Thus $\CF(\mathcal{C})$ is a monoid with respect to $\star$.

The category of $\bar{Z}$-comodules can be identified with the category of $Z$-modules, and hence with $\mathcal{Z}(\mathcal{C})$. Since the co-Kleisli category of $\bar{Z}$ is equivalent to the category of free $\bar{Z}$-comodules, we have the following theorem:

\begin{theorem}
  \label{thm:class-ft-induc}
  The adjunction isomorphism
  \begin{equation}
    \label{eq:class-ft-adj-iso}
    \CF(\mathcal{C})
    = \Hom_{\mathcal{C}}(U R(\unitobj), \unitobj)
    \cong \End_{\mathcal{Z}(\mathcal{C})}(R(\unitobj)),
    \quad f \mapsto \bar{Z}(f) \circ \delta_{\unitobj}
  \end{equation}
  is an isomorphism of monoids.
\end{theorem}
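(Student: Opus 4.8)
The plan is to verify that the stated bijection $f \mapsto \bar{Z}(f) \circ \delta_{\unitobj}$ is both a bijection and multiplicative, using the general yoga of (co)monads and their co-Kleisli categories. First I would recall the standard fact that for any comonad $\bar{Z}$ on a category $\mathcal{C}$, the co-Kleisli category $\mathcal{C}_{\bar{Z}}$ — which has the same objects as $\mathcal{C}$ and $\Hom_{\mathcal{C}_{\bar{Z}}}(V,W) = \Hom_{\mathcal{C}}(\bar{Z}(V), W)$ with composition $g \diamond f = g \circ \bar{Z}(f) \circ \delta_V$ — embeds fully faithfully into the category $\mathcal{C}^{\bar{Z}}$ of $\bar{Z}$-comodules as the full subcategory of cofree (co-Eilenberg–Moore-free) comodules, the comparison functor sending $V$ to $(\bar{Z}(V), \delta_V)$ and a co-Kleisli morphism $f \colon \bar{Z}(V) \to W$ to $\bar{Z}(f) \circ \delta_V \colon \bar{Z}(V) \to \bar{Z}(W)$. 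By construction of $\star$ in \eqref{eq:class-ft-prod}, the monoid $(\CF(\mathcal{C}), \star)$ is exactly $\End_{\mathcal{C}_{\bar{Z}}}(\unitobj)$, so the comparison functor restricted to the one-object subcategory on $\unitobj$ gives a monoid isomorphism onto $\End_{\mathcal{C}^{\bar{Z}}}(\bar{Z}(\unitobj), \delta_{\unitobj})$.

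Next I would identify $(\bar{Z}(\unitobj), \delta_{\unitobj})$ as a $\bar{Z}$-comodule with $R(\unitobj)$ as an object of $\mathcal{Z}(\mathcal{C})$. Here I invoke the identifications already set up in the excerpt: $\bar{Z} = U R \cong Z^!$, so the category of $\bar{Z}$-comodules is isomorphic to the category of $Z$-modules (by property (2) of \S\ref{subsec:adj-rig-mon}), which in turn is isomorphic to $\mathcal{Z}(\mathcal{C})$ as shown in the discussion of the central Hopf monad. Under this chain of isomorphisms, the cofree $\bar{Z}$-comodule on $\unitobj$ corresponds precisely to $R(\unitobj)$: indeed $R = L^!$ and $L$ is the free $Z$-module functor, so $L^!(\unitobj)$ as a $Z^! = \bar{Z}$-comodule is the cofree one on $\unitobj$. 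Hence $\End_{\mathcal{C}^{\bar{Z}}}(\bar{Z}(\unitobj), \delta_{\unitobj}) \cong \End_{\mathcal{Z}(\mathcal{C})}(R(\unitobj))$ as algebras, and composing with the previous paragraph gives the desired monoid isomorphism.

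Finally I would check that the composite isomorphism agrees on the nose with the adjunction isomorphism displayed in \eqref{eq:class-ft-adj-iso}, namely that for $f \in \CF(\mathcal{C}) = \Hom_{\mathcal{C}}(UR(\unitobj), \unitobj)$ the mate $\tilde f \in \Hom_{\mathcal{Z}(\mathcal{C})}(R(\unitobj), R(\unitobj))$ under the adjunction $U \dashv R$ is given by $\tilde f = \bar{Z}(f) \circ \delta_{\unitobj}$ (viewing the right-hand side as a morphism in $\mathcal{Z}(\mathcal{C})$ via the identifications above). This is the general formula for the mate of a morphism into the unit under a comonad-induced adjunction: if $\varepsilon \colon UR \to \id$ and the comultiplication is $\delta = U \eta' R$ with $\eta'$ the unit of $U \dashv R$, then the mate of $f \colon UR(\unitobj) \to \unitobj$ is $R(f \circ \varepsilon^{-1}\text{-type data})$... more simply, $\tilde f$ is the unique $\mathcal{Z}(\mathcal{C})$-morphism with $\varepsilon_{\unitobj} \circ U(\tilde f) = f$, and one verifies $U(\bar{Z}(f)\circ\delta_{\unitobj}) = \bar{Z}(f) \circ \delta_{\unitobj}$ satisfies this using the counit axiom $\varepsilon_{\bar{Z}(\unitobj)} \circ \delta_{\unitobj} = \id$ together with naturality of $\varepsilon$.

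The main obstacle, and the only step requiring genuine care rather than bookkeeping, is the bridge in the second paragraph: making the three identifications (comodules over $\bar{Z}$, modules over $Z$, objects of $\mathcal{Z}(\mathcal{C})$) mutually compatible and checking that the cofree comodule on $\unitobj$ is carried to $R(\unitobj)$ rather than to some twisted or dual version of it. One must be vigilant about the dualities implicit in $(-)^!$ and about which side's adjunction ($U \dashv R$ versus $L \dashv U$) is being used, since $R \cong L^!$ and a careless application of $(-)^!$ can silently replace an object by its dual. Once that compatibility is pinned down, the rest is a routine unwinding of co-Kleisli composition and the mate correspondence.
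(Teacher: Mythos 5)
Your proposal is correct and follows essentially the same route as the paper, which deduces the theorem from the two observations that $\star$ is co-Kleisli composition for the comonad $\bar{Z}$ and that the co-Kleisli category embeds into the category of $\bar{Z}$-comodules (identified with $\mathcal{Z}(\mathcal{C})$) as the cofree comodules, with $\unitobj$ going to $R(\unitobj)$. Your extra verification that the comparison functor's formula $f \mapsto \bar{Z}(f)\circ\delta_{\unitobj}$ agrees with the mate under $U \dashv R$ is a detail the paper leaves implicit but is handled correctly.
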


\subsection{Internal characters}

We now suppose that the monoidal category $\mathcal{C}$ has a pivotal structure $j: \id_{\mathcal{C}} \to (-)^{**}$. We introduce a category-theoretical analogue of the notion of the character of a representation:

\begin{definition}
  The {\em internal character} of $X \in \mathcal{C}$ is the class function
  \begin{equation*}
    \mathsf{ch}(X) := \trace_{A, \unitobj}^X(\rho_X) \in \CF(\mathcal{C}).
  \end{equation*}
\end{definition}

By the definition of the canonical action, we have
\begin{equation}
  \label{eq:character-pi}
  \mathsf{ch}(X) = \widetilde{\eval}_X \circ \pi_{\unitobj; X},
\end{equation}
where $\widetilde{\eval}_X: X \otimes X^* \to \unitobj$ is the morphism defined by~\eqref{eq:eval-tilde-def}. Hence,
\begin{equation}
  \label{eq:character-unit-obj}
  \mathsf{ch}(\unitobj)
  = \widetilde{\eval}_{\unitobj} \circ \pi_{\unitobj; \unitobj}
  = \pi_{\unitobj; \unitobj}
  = \varepsilon_{\unitobj}.
\end{equation}

\begin{theorem}
  \label{thm:ch-mult}
  $\mathsf{ch}(X \otimes Y) = \mathsf{ch}(X) \star \mathsf{ch}(Y)$ for all $X, Y \in \mathcal{C}$.
\end{theorem}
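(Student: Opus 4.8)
The plan is to translate the multiplicativity of $\mathsf{ch}$ into a statement about the comonoidal structure $\bar{Z}_2$ of the central Hopf comonad, using the description \eqref{eq:character-pi} of the internal character in terms of the universal dinatural transformation $\pi$. Recall from \eqref{eq:class-ft-prod} that $\mathsf{ch}(X) \star \mathsf{ch}(Y) = \mathsf{ch}(X) \circ \bar{Z}(\mathsf{ch}(Y)) \circ \delta_{\unitobj}$. First I would rewrite the comultiplication $\delta_{\unitobj}: A \to \bar{Z}(A)$ via its defining property \eqref{eq:Hopf-comonad-Z-comult}, so that precomposing with $\pi_{\bar{Z}(\unitobj); X}$ and then $(\id_X \otimes \pi_{\unitobj; Y} \otimes \id_{X^*})$ recovers $\pi_{\unitobj; X \otimes Y}$. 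The point is that $\bar{Z}(\mathsf{ch}(Y)) \circ \delta_{\unitobj}$ is the co-Kleisli composite expressing how a class function on the "$Y$-slot" gets absorbed, and by naturality of $\pi$ in its first variable together with \eqref{eq:character-pi} applied to $Y$, this composite should be computable explicitly after hitting it with $\pi_{\unitobj;X}$.

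The key steps, in order. (1) Using the universal property of the end $\bar{Z}(\unitobj) = \int_X X \otimes X^*$ (from \eqref{eq:Hopf-comonad-Z}), it suffices to check that $\mathsf{ch}(X\otimes Y)$ and $\mathsf{ch}(X)\star\mathsf{ch}(Y)$ agree after composing with the family $\pi_{\unitobj; W}$ for all $W$; but actually both sides are already morphisms \emph{out of} $A$, so instead I would verify the identity by expanding $\mathsf{ch}(X)\star\mathsf{ch}(Y)$ directly and using \eqref{eq:Hopf-comonad-Z-comult} with the roles of $X$ and $Y$ in that display. (2) Substitute \eqref{eq:character-pi}: the right-hand side becomes $\widetilde{\eval}_X \circ \pi_{\unitobj;X} \circ \bar{Z}(\widetilde{\eval}_Y \circ \pi_{\unitobj;Y}) \circ \delta_{\unitobj}$. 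Use naturality of $\pi_{-;X}$ in the first variable to pull $\bar{Z}(\widetilde{\eval}_Y \circ \pi_{\unitobj;Y})$ through $\pi_{\unitobj;X}$, converting it into $\pi_{\bar{Z}(\unitobj);X}$ followed by $\id_X \otimes (\widetilde{\eval}_Y\circ\pi_{\unitobj;Y}) \otimes \id_{X^*}$. (3) Now apply \eqref{eq:Hopf-comonad-Z-comult}: the composite $(\id_X \otimes \pi_{\unitobj;Y} \otimes \id_{X^*}) \circ \pi_{\bar{Z}(\unitobj);X} \circ \delta_{\unitobj}$ equals $\pi_{\unitobj; X\otimes Y}$. (4) What remains is the purely diagrammatic identity $\widetilde{\eval}_X \circ (\id_X \otimes \widetilde{\eval}_Y \otimes \id_{X^*}) = \widetilde{\eval}_{X \otimes Y}$, i.e.\ compatibility of the modified evaluations with the strict monoidal identification $(X\otimes Y)^* = Y^*\otimes X^*$; this follows from the fact that the pivotal structure $j$ is monoidal together with the standard formula $\eval_{X\otimes Y} = \eval_Y \circ (\id_{Y^*}\otimes \eval_X \otimes \id_Y)$.

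I expect step (2)—the careful bookkeeping of naturality of $\pi$ in its first variable, and keeping the end-versus-coend conventions straight after passing through $(-)^!$—to be the main obstacle, since one must be sure that $\bar{Z}$ applied to a morphism $A \to \unitobj$ interacts with $\pi_{\unitobj;X}$ exactly as the dinaturality diagram predicts. Once that naturality square is set up correctly, step (3) is a direct invocation of \eqref{eq:Hopf-comonad-Z-comult} and step (4) is a short graphical calculus computation that I would not spell out in full. A clean way to organize the whole argument is to note that $f \mapsto \mathsf{ch}$ factors as $X \mapsto (\text{the class function } \widetilde{\eval}_X \circ \pi_{\unitobj;X})$, and that the assignment $X \mapsto \pi_{\unitobj;X}$ is compatible with $\bar Z_2$ by construction of the comonoidal structure; phrased this way, Theorem~\ref{thm:ch-mult} is essentially the statement that $\bar Z_2$ was built precisely so that partial pivotal traces multiply, and the proof is the unwinding of that tautology.
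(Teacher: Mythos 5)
Your proposal is correct and follows essentially the same route as the paper's proof: expand $\mathsf{ch}(X)\star\mathsf{ch}(Y)$ via \eqref{eq:class-ft-prod} and \eqref{eq:character-pi}, use naturality of $\pi$ in its first variable, invoke \eqref{eq:Hopf-comonad-Z-comult} to produce $\pi_{\unitobj;X\otimes Y}$, and finish with the identity $\widetilde{\eval}_{X\otimes Y}=\widetilde{\eval}_X\circ(\id_X\otimes\widetilde{\eval}_Y\otimes\id_{X^*})$ coming from monoidality of the pivotal structure. Your steps (2)--(4) are exactly the four displayed equalities in the paper's computation.
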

\begin{proof}
  Since $j_{X \otimes Y} = j_X \otimes j_Y$, we have
  \begin{equation}
    \label{eq:eval-tilde}
    \widetilde{\eval}_{X \otimes Y} = \widetilde{\eval}_X \circ (\id_X \otimes \widetilde{\eval}_Y \otimes \id_{X^*})
  \end{equation}
  for all $X, Y \in \mathcal{C}$. Hence,
  \begin{align*}
    \allowdisplaybreaks
    & \mathsf{ch}(X) \star \mathsf{ch}(Y) \\
    & = \widetilde{\eval}_X \circ \pi_{\unitobj; X} \circ \bar{Z}(\mathsf{ch}(Y)) \circ \delta_{\unitobj}
    & & \text{(by~\eqref{eq:class-ft-prod} and~\eqref{eq:character-pi})} \\
    & = \widetilde{\eval}_X \circ (\id_X \otimes \mathsf{ch}(Y) \otimes \id_{X^*}) \circ \pi_{\bar{Z}(\unitobj); X} \circ \delta_{\unitobj}
    & & \text{(by the naturality of $\pi$)} \\
    & = \widetilde{\eval}_X \circ (\id_X \otimes \widetilde{\eval}_Y \otimes \id_{X^*}) \\
    & \qquad \qquad \circ (\id_X \otimes \pi_{\unitobj; Y} \otimes \id_{X^*}) \circ \pi_{\bar{Z}(\unitobj); X} \circ \delta_{\unitobj}
    & & \text{(by \eqref{eq:character-pi})} \\
    & = \widetilde{\eval}_{X \otimes Y} \circ \pi_{\unitobj; X \otimes Y}
    & & \text{(by~\eqref{eq:Hopf-comonad-Z-comult} and \eqref{eq:eval-tilde})} \\
    & = \mathsf{ch}(X \otimes Y)
    & & \text{(by \eqref{eq:character-pi})}. \qedhere
  \end{align*}
\end{proof}

\begin{remark}
  The graphical calculus for the central Hopf monad $Z$, explained in \cite{2013arXiv1309.4539S}, would be helpful to understand the above computation.
\end{remark}

\begin{remark}
  \label{rem:coend-Hopf}
  The coend $F = \int^{X \in \mathcal{C}} X^* \otimes X$ has a structure of a coalgebra in $\mathcal{C}$ and every object of $\mathcal{C}$ has a natural structure of a right $F$-comodule, which we call the canonical coaction (see \cite[\S5.2.2]{MR1862634}). Taking the left partial pivotal trace of the canonical coaction, we obtain a morphism $\mathsf{ch}'(X): \unitobj \to F$.
  The morphism $\mathsf{ch}'(X)$ has been considered in the study of conformal field theories \cite{MR3289342,MR3146014}.
  If we realize the adjoint algebra $A$ by $A = F^*$, then we have
  \begin{equation*}
    \mathsf{ch}'(X) = {}^*\mathsf{ch}(X^*)
  \end{equation*}
  for all $X \in \mathcal{C}$.
  Thus the theory of $\mathsf{ch}$ and that of $\mathsf{ch}'$ are essentially same.
  We prefer to deal with $\mathsf{ch}(X)$ since it is more convenient for our applications.
\end{remark}

\subsection{The braided case}

Suppose that $\mathcal{C}$ has a braiding $\sigma$.
Then the coend $F$ in Remark~\ref{rem:coend-Hopf} is not only a coalgebra but a Hopf algebra in $\mathcal{C}$ \cite{MR1381692,MR1862634}.
Hence the adjoint algebra is also a Hopf algebra as the dual of $F$.
The comultiplication $\underline{\Delta}$ of $A$ is a unique morphism such that
\begin{equation}
  \label{eq:transmutation-comul}
  (\pi_{\unitobj; X} \otimes \pi_{\unitobj; Y}) \circ \underline{\Delta}
  = (\id_X \otimes \sigma_{Y \otimes Y^*, X^*}) \circ \pi_{\unitobj; X \otimes Y}
\end{equation}
for all $X, Y \in \mathcal{C}$, and the counit of $A$ is $\varepsilon_{\unitobj}$.
We omit the description of the antipode since it will not be used.

As we have mentioned in Remark~\ref{rem:coend-Hopf}, a construction of internal characters has been appeared in \cite{MR3289342,MR3146014}.
The product of characters is defined by using the multiplication of the Hopf algebra $F$ in these papers.
Translating its definition into our context, we introduce a binary operation $\tilde{\star}$ on $\CF(\mathcal{C})$ by
$f \mathop{\tilde{\star}} g := (f \otimes g) \circ \underline{\Delta}$.
Now there are two binary operations on $\CF(\mathcal{C})$. These operations coincide:

\begin{proposition}
  $f \star g = f \mathop{\tilde{\star}} g$ for all $f, g \in \CF(\mathcal{C})$.
\end{proposition}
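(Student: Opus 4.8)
The plan is to verify the identity by comparing both sides after composing with the universal dinatural transformation. The key point is that $\CF(\mathcal{C}) = \Hom_{\mathcal{C}}(A, \unitobj)$ and that a class function $h: A \to \unitobj$ is completely determined by the family of morphisms $h \circ \pi_{\unitobj; X}^\vee$ — more precisely, since $A = \bar{Z}(\unitobj) = \int_{X} X \otimes X^*$ has the universal dinatural transformation $\pi_{\unitobj; X}: A \to X \otimes X^*$, a morphism out of $A$ is determined by precomposing it with the $\pi_{\unitobj; X}$. Concretely, two class functions $f \star g$ and $f \mathop{\tilde\star} g$ agree if and only if, after tensoring appropriately and composing with $\pi_{\unitobj; X} \otimes \pi_{\unitobj; Y}$ (or with $\pi_{\unitobj; X \otimes Y}$), they give the same answer for all $X, Y \in \mathcal{C}$. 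Since both operations are already known to be associative monoid structures on $\CF(\mathcal{C})$, it actually suffices to prove the identity on internal characters, but in fact the direct computation below works for arbitrary $f, g$.

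First I would unwind the left-hand side. By~\eqref{eq:class-ft-prod}, $f \star g = f \circ \bar{Z}(g) \circ \delta_{\unitobj}$. Using the naturality of $\pi$ (as in the proof of Theorem~\ref{thm:ch-mult}), one computes
\begin{equation*}
  \pi_{\unitobj; X} \circ \bar{Z}(g) \circ \delta_{\unitobj}
  = (\id_X \otimes g \otimes \id_{X^*}) \circ \pi_{\bar{Z}(\unitobj); X} \circ \delta_{\unitobj},
\end{equation*}
and then composing with $\id_X \otimes \pi_{\unitobj; Y} \otimes \id_{X^*}$ and invoking the comultiplication formula~\eqref{eq:Hopf-comonad-Z-comult} turns the right-hand side into an expression built from $\pi_{\unitobj; X \otimes Y}$. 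This is exactly the manipulation already carried out in the proof of Theorem~\ref{thm:ch-mult}; the net effect is that $(f \star g) \circ \pi_{\unitobj; X \otimes Y}$, read through the coend structure, becomes $(f \otimes g)$ applied to the two ``halves'' of $\pi_{\unitobj; X \otimes Y}$ after a suitable insertion of $\coev$ and $\eval$.

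Next I would unwind the right-hand side. By definition $f \mathop{\tilde\star} g = (f \otimes g) \circ \underline{\Delta}$, where $\underline{\Delta}$ is the comultiplication of the braided Hopf algebra $A$ characterized by~\eqref{eq:transmutation-comul}:
\begin{equation*}
  (\pi_{\unitobj; X} \otimes \pi_{\unitobj; Y}) \circ \underline{\Delta}
  = (\id_X \otimes \sigma_{Y \otimes Y^*, X^*}) \circ \pi_{\unitobj; X \otimes Y}.
\end{equation*}
Composing $f \mathop{\tilde\star} g$ with $\pi_{\unitobj; X \otimes Y}$ and applying~\eqref{eq:transmutation-comul}, the goal reduces to checking that the ``co-Kleisli'' expression obtained from $f \star g$ matches $(f \otimes g) \circ (\id_X \otimes \sigma_{Y \otimes Y^*, X^*})$ precomposed with $\pi_{\unitobj; X \otimes Y}$. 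The main work is to see that the braiding $\sigma_{Y \otimes Y^*, X^*}$ appearing in~\eqref{eq:transmutation-comul} is precisely what the comonad comultiplication $\delta_{\unitobj}$ encodes when $\bar{Z}$ is the central Hopf comonad of a braided category — in other words, that the half-braiding built into $A \in \mathcal{Z}(\mathcal{C})$ (Remark~\ref{rem:can-act-half-br}) coincides with the categorical braiding $\sigma$ on the relevant objects. Once that compatibility is established, both sides reduce to the same morphism $X \otimes X^* \otimes Y \otimes Y^* \to \unitobj$, and cancelling the (epimorphic family of) dinatural maps $\pi_{\unitobj; X \otimes Y}$ finishes the proof.

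The main obstacle will be bookkeeping: matching the half-braiding $\sigma_X$ of $A$ coming from the comonad structure~\eqref{eq:Hopf-comonad-Z-comult} (via Remark~\ref{rem:can-act-half-br}) with the braiding $\sigma$ on $\mathcal{C}$ used in~\eqref{eq:transmutation-comul}, keeping careful track of which duality conventions and which order of tensor factors is in force. This is essentially a diagrammatic identity, and the graphical calculus for the central Hopf monad referred to in the remark after Theorem~\ref{thm:ch-mult} makes it transparent; written out in formulas it is a routine but somewhat lengthy chase through~\eqref{eq:Hopf-comonad-Z-comult}, \eqref{eq:adj-obj-mult}, and~\eqref{eq:transmutation-comul}. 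No genuinely new idea is needed beyond the universal property of the coend and the two defining formulas for $\delta_{\unitobj}$ and $\underline{\Delta}$.
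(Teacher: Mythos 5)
Your overall strategy has a genuine gap: you propose to verify $f \star g = f \mathop{\tilde{\star}} g$ by ``cancelling'' the family $\pi_{\unitobj; X \otimes Y}$, but $A = \bar{Z}(\unitobj)$ is an \emph{end}, so its structure maps $\pi_{\unitobj; X}\colon A \to X \otimes X^*$ point \emph{out of} $A$; the universal property controls morphisms \emph{into} $A$ and says nothing about when two morphisms $A \to \unitobj$ coincide. (Your phrase ``precomposing $h$ with $\pi_{\unitobj; X}$'' does not even typecheck, since both have domain $A$; the family $\pi_{\unitobj;X}$ is not jointly epic.) Relatedly, the computation of Theorem~\ref{thm:ch-mult} that you want to reuse works only because $\mathsf{ch}(Y)$ has the special form $\widetilde{\eval}_Y \circ \pi_{\unitobj; Y}$, which is exactly what allows \eqref{eq:Hopf-comonad-Z-comult} to be inserted; a general class function $g$ does not factor through any $\pi_{\unitobj; Y}$, so the step where you ``read $f \star g$ through the coend structure'' is unavailable. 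Finally, the fallback reduction to internal characters fails because the irreducible characters do not span $\CF(\mathcal{C})$ in general (cf.\ Theorem~\ref{thm:unimo-gr}), and the proposition concerns arbitrary class functions in an arbitrary braided rigid monoidal category with the relevant coends.

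The paper closes exactly this gap with one idea your outline is missing: the Hopf operator $\mathcal{H}^{(\ell)}_V\colon \bar{Z}(V) \to A \otimes V$ of \eqref{eq:comonad-Z-left-Hopf-oper}, which is \emph{natural in $V$}. The identity $\underline{\Delta} = \mathcal{H}^{(\ell)}_A \circ \delta_{\unitobj}$ is then legitimately checked by postcomposing with the family $\pi_{\unitobj; X} \otimes \pi_{\unitobj; Y}$ (these are maps \emph{into} $A \otimes A$, which is again an end, so the universal property does apply), and the naturality of $\mathcal{H}^{(\ell)}$ applied to the morphism $g\colon A \to \unitobj$ gives
\begin{equation*}
  (\id_A \otimes g) \circ \mathcal{H}^{(\ell)}_A = \mathcal{H}^{(\ell)}_{\unitobj} \circ \bar{Z}(g) = \bar{Z}(g),
\end{equation*}
whence $f \mathop{\tilde{\star}} g = f \circ \bar{Z}(g) \circ \delta_{\unitobj} = f \star g$. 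Some device of this kind for handling arbitrary $f, g$ is indispensable; without it your computation cannot be completed.
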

\begin{proof}
  For $V \in \mathcal{C}$, there is a unique morphism $\mathcal{H}_V^{(\ell)}: \bar{Z}(V) \to A \otimes V$ such that
  \begin{equation}
    \label{eq:comonad-Z-left-Hopf-oper}
    (\pi_{\unitobj; X} \otimes \id_V) \circ \mathcal{H}_V^{(\ell)}
    = (\id_X \otimes \sigma_{V,X^*}) \circ \pi_{V; X}
  \end{equation}
  for all $X \in \mathcal{C}$. By the naturality of $\sigma$ and $\pi$, we see that $\mathcal{H}^{(\ell)}_V$ is natural in the variable $V$. Let $\delta: \bar{Z} \to \bar{Z}{}^2$ be the comultiplication of $\bar{Z}$. Then we compute
  \begin{align*}
    \allowdisplaybreaks
    & (\pi_{\unitobj; X} \otimes \pi_{\unitobj; Y}) \circ \mathcal{H}_A^{(\ell)} \circ \delta_{\unitobj} \\
    & = (\id_X \otimes \id_{X^*} \otimes \pi_{\unitobj; Y}) \circ (\id_X \otimes \sigma_{A,X^*}) \circ \pi_{A; X} \circ \delta_{\unitobj}
    & & \text{(by \eqref{eq:comonad-Z-left-Hopf-oper})} \\
    & = (\id_X \otimes \sigma_{Y \otimes Y^*, X}) \circ (\id_X \otimes \pi_{\unitobj;Y} \otimes \id_{X^*}) \circ \pi_{A; X} \circ \delta_{\unitobj}
    & & \text{(by the naturality)} \\
    & = (\id_X \otimes \sigma_{Y \otimes Y^*, X}) \circ \pi_{\unitobj; X \otimes Y}
    & & \text{(by \eqref{eq:Hopf-comonad-Z-comult})} \\
    & = (\pi_{\unitobj; X} \otimes \pi_{\unitobj; Y}) \circ \underline{\Delta}
    & & \text{(by~\eqref{eq:transmutation-comul})}
  \end{align*}
  for all $X, Y \in \mathcal{C}$. Hence we obtain
  \begin{equation}
    \label{eq:transmutation-comul-Hopf-l}
    \underline{\Delta} = \mathcal{H}_A^{(\ell)} \circ \delta_{\unitobj}.
  \end{equation}
  Using this formula, we compute
  \begin{align*}
    \allowdisplaybreaks
    f \mathop{\tilde{\star}} g
    & = (f \otimes g) \circ \mathcal{H}_A^{(\ell)} \circ \delta_{\unitobj} \\
    & = f \circ \mathcal{H}^{(\ell)}_{\unitobj} \circ \bar{Z}(g) \circ \delta_{\unitobj}
    & & \text{(by the naturality of $\mathcal{H}^{(\ell)}$)} \\
    & = f \circ\bar{Z}(g) \circ \delta_{\unitobj}
    & & \text{(since $\mathcal{H}^{(\ell)}_{\unitobj}$ is the identity)} \\
    & = f \star g. & & \qedhere
  \end{align*}
\end{proof}

\begin{theorem}
  If $\mathcal{C}$ is braided, then the monoid $\CF(\mathcal{C})$ is commutative.
\end{theorem}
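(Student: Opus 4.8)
The plan is to reduce everything to the preceding Proposition, applied twice: once in $\mathcal{C}$ and once in the mirror braided category $\mathcal{C}^{\mir} = (\mathcal{C}, \otimes, \unitobj, \sigma^{\mir})$, where $\sigma^{\mir}_{X,Y} = \sigma^{-1}_{Y,X}$. By the Proposition we already have $\star = \tilde{\star}$, so $f \star g = (f \otimes g) \circ \underline{\Delta}$ for all $f, g \in \CF(\mathcal{C})$, and the assertion is equivalent to the identity $(f \otimes g) \circ \underline{\Delta} = (g \otimes f) \circ \underline{\Delta}$ for all $f, g \colon A \to \unitobj$. The idea is to produce a second formula for $f \star g$ by rerunning the same argument with the reversed braiding.

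First I would note that the product $\star$ defined by \eqref{eq:class-ft-prod} is built from the comonad $\bar{Z}$ alone, and $\bar{Z} = U R$ depends only on the monoidal structure of $\mathcal{C}$, not on a braiding; hence the monoid $(\CF(\mathcal{C}), \star)$ is literally unchanged if we replace $\sigma$ by $\sigma^{\mir}$. Applying the preceding Proposition to $\mathcal{C}^{\mir}$ (which is again a braided monoidal category in which the relevant coends exist) therefore gives $f \star g = (f \otimes g) \circ \underline{\Delta}^{\mir}$, where $\underline{\Delta}^{\mir} \colon A \to A \otimes A$ is the comultiplication of $A$ determined by \eqref{eq:transmutation-comul} with $\sigma$ replaced by $\sigma^{\mir}$. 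Comparing with the formula for $\mathcal{C}$, we get $(f \otimes g) \circ \underline{\Delta} = (f \otimes g) \circ \underline{\Delta}^{\mir}$ for all $f, g$.

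Next I would identify $\underline{\Delta}^{\mir}$ with the co-opposite of $\underline{\Delta}$, i.e.\ prove $\underline{\Delta}^{\mir} = \sigma_{A,A}^{-1} \circ \underline{\Delta}$. Composing both sides with $\pi_{\unitobj;X} \otimes \pi_{\unitobj;Y}$, the left-hand side unwinds directly from the definition to $(\id_X \otimes \sigma^{-1}_{X^*,\, Y \otimes Y^*}) \circ \pi_{\unitobj; X \otimes Y}$, while the right-hand side, using naturality of $\sigma$ and \eqref{eq:transmutation-comul} for the index order $(Y,X)$, becomes $\sigma^{-1}_{X \otimes X^*,\, Y \otimes Y^*} \circ (\id_Y \otimes \sigma_{X \otimes X^*, Y^*}) \circ \pi_{\unitobj; Y \otimes X}$; the two match after invoking the dinaturality of $\pi$ along $\sigma_{X,Y} \colon X \otimes Y \to Y \otimes X$ (to relate $\pi_{\unitobj;X\otimes Y}$ and $\pi_{\unitobj;Y\otimes X}$), one hexagon identity, and the formula expressing the transpose of a braiding through the braiding. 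Since the family $\{\pi_{\unitobj;X} \otimes \pi_{\unitobj;Y}\}_{X,Y}$ is jointly monic (the tensor product preserving the end $A = \int_X X \otimes X^*$ in each variable, which holds in the cases of interest, e.g.\ in a finite tensor category), this yields $\underline{\Delta}^{\mir} = \sigma_{A,A}^{-1} \circ \underline{\Delta}$. I expect this to be the only genuinely computational step; it is essentially the classical fact that Lyubashenko's coend Hopf algebra for the reversed braiding is the co-opposite of the one for the original braiding.

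Finally I would combine everything: the naturality of $\sigma^{-1}$ applied to $f \colon A \to \unitobj$ and $g \colon A \to \unitobj$ gives $(f \otimes g) \circ \sigma_{A,A}^{-1} = (g \otimes f)$ as morphisms $A \otimes A \to \unitobj$, whence
\[
  f \star g = (f \otimes g) \circ \underline{\Delta} = (f \otimes g) \circ \underline{\Delta}^{\mir} = (f \otimes g) \circ \sigma_{A,A}^{-1} \circ \underline{\Delta} = (g \otimes f) \circ \underline{\Delta} = g \star g',
\]
wait — $= g \star f$, so $\CF(\mathcal{C})$ is commutative. The hard part is the diagram chase identifying $\underline{\Delta}^{\mir}$ with $\sigma_{A,A}^{-1} \circ \underline{\Delta}$; everything else is formal manipulation of the adjunction and the braiding.
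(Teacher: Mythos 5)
Your proof is correct, and it reaches the conclusion by a genuinely different route from the paper's. The paper defines a second operator $\mathcal{H}^{(r)}_V: \bar{Z}(V)\to V\otimes A$ by $(\id_V\otimes\pi_{\unitobj;X})\circ\mathcal{H}^{(r)}_V=(\sigma_{X,V}\otimes\id_{X^*})\circ\pi_{V;X}$, proves the second factorization $\underline{\Delta}=\mathcal{H}^{(r)}_A\circ\delta_{\unitobj}$, and then slides $g$ through by naturality of $\mathcal{H}^{(r)}$ in $V$ to get $g\star f=(g\otimes f)\circ\underline{\Delta}=f\circ\bar{Z}(g)\circ\delta_{\unitobj}=f\star g$; you instead exploit the braiding-independence of $\star$ to run the preceding Proposition in $\mathcal{C}^{\mir}$ and identify $\underline{\Delta}^{\mir}$ with $\sigma_{A,A}^{-1}\circ\underline{\Delta}$, after which naturality of $\sigma$ swaps $f$ and $g$. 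The two arguments share their computational core: your verification that $\underline{\Delta}^{\mir}=\sigma_{A,A}^{-1}\circ\underline{\Delta}$ reduces, after the hexagon cancellation you describe, to the identity $(\sigma_{X,Y\otimes Y^*}\otimes\id_{X^*})\circ\pi_{\unitobj;X\otimes Y}=(\id_Y\otimes\sigma_{X\otimes X^*,Y^*})\circ\pi_{\unitobj;Y\otimes X}$, which (up to relabelling $X\leftrightarrow Y$) is exactly the display the paper computes via dinaturality of $\pi$ along $\sigma_{Y,X}$ plus one hexagon before extracting $\mathcal{H}^{(r)}$. What your version buys is the clean conceptual statement that the transmutation coalgebra for the mirror braiding is the co-opposite of the original one, which makes the commutativity transparent; what the paper's version buys is that it never changes the ambient braided category and needs no joint monicity of $\{\pi_{\unitobj;X}\otimes\pi_{\unitobj;Y}\}$ beyond what is already implicit in the uniqueness clause of \eqref{eq:transmutation-comul} (a point you correctly flag and which does hold in the finite setting, so it is not a gap). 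The final ``$g\star g'$'' is just a slip you already caught; the chain ends correctly at $g\star f$.
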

\begin{proof}
  We define $\mathcal{H}_V^{(r)}: \bar{Z}(V) \to V \otimes A$ by
  \begin{equation}
    \label{eq:comonad-Z-right-Hopf-oper}
    (\id_V \otimes \pi_{\unitobj; X}) \circ \mathcal{H}_V^{(r)}
    = (\sigma_{X,V} \otimes \id_{X^*}) \circ \pi_{V; X}
  \end{equation}
  for $X \in \mathcal{C}$ ({\it cf}. \eqref{eq:comonad-Z-left-Hopf-oper}). By the definition of a braiding, we have
  \begin{equation*}
    \sigma_{Y \otimes Y^*, X^*}
    = (\sigma_{Y,X^*} \otimes \id_{Y^*}) \circ (\id_Y \otimes \sigma_{Y^*,X^*})
    = (\sigma_{Y,X^*} \otimes \id_{Y^*}) \circ (\id_Y \otimes \sigma_{Y,X}^*)
  \end{equation*}
  for all $X, Y \in \mathcal{C}$ (see \cite[XIII--XIV]{MR1321145}). Hence,
  \begin{align*}
    & (\pi_{\unitobj; X} \otimes \pi_{\unitobj; Y}) \circ \underline{\Delta} \\
    & = (\id_X \otimes \sigma_{Y,X^*} \otimes \id_{Y^*}) \circ (\id_X \otimes \id_Y \otimes \sigma_{Y,X}^*) \circ \pi_{\unitobj; X \otimes Y}
    & & \text{(by~\eqref{eq:transmutation-comul})} \\
    & = (\id_X \otimes \sigma_{Y,X^*} \otimes \id_{Y^*}) \circ (\sigma_{Y,X} \otimes \id_{X^*} \otimes \id_{Y^*}) \circ \pi_{\unitobj; Y \otimes X}
    & & \text{(by the dinaturality)} \\
    & = (\sigma_{Y, X \otimes X^*} \otimes \id_{Y^*}) \circ \pi_{\unitobj; Y \otimes X}
  \end{align*}
  for all $X, Y \in \mathcal{C}$.
  We obtain $\underline{\Delta} = \mathcal{H}^{(r)}_{A} \circ \delta_{\unitobj}$ in a similar way as~\eqref{eq:transmutation-comul-Hopf-l} in the proof of the previous lemma but by using this expression instead of \eqref{eq:transmutation-comul}. Thus,
  \begin{align*}
    g \star f
    & = (g \otimes f) \circ \underline{\Delta} \\
    & = (g \otimes f) \circ \mathcal{H}^{(r)}_A \circ \delta_{\unitobj} \\
    & = f \circ \mathcal{H}^{(r)}_{\unitobj} \circ \bar{Z}(g) \circ \delta_{\unitobj}
    & & \text{(by the naturality of $\mathcal{H}^{(r)}$)} \\
    & = f \circ \bar{Z}(g) \circ \delta_{\unitobj}
    & & \text{(since $\mathcal{H}^{(r)}_{\unitobj}$ is the identity)} \\
    & = f \star g
  \end{align*}
  for all class functions $f$ and $g$.
\end{proof}

\subsection{The case of Hopf algebras}
\label{subsec:Hopf-alg}

Let $H$ be a finite-dimensional Hopf algebra over a field $k$ with comultiplication $\Delta$, counit $\varepsilon$ and antipode $S$.
We will use the Sweedler notation such as
\begin{equation*}
  \Delta(h) = h_{(1)} \otimes h_{(2)}
  \text{\ and \ }
  \Delta(h_{(1)}) \otimes h_{(2)} = h_{(1)} \otimes h_{(2)} \otimes h_{(3)}
  = h_{(1)} \otimes \Delta(h_{(2)})
\end{equation*}
for $h \in H$. To conclude this section, we explain what our results mean if $\mathcal{C}$ is the representation category of $H$.

Recall that a (left-left-){\em Yetter-Drinfeld $H$-module} is a left $H$-module $V$ endowed with a left $H$-comodule structure, denoted by $v \mapsto v_{(-1)} \otimes v_{(0)}$, such that
\begin{equation*}
  (h \cdot v)_{(-1)} \otimes (h \cdot v)_{(0)} = h_{(1)} v_{(-1)} S(h_{(3)}) \otimes h_{(2)} v_{(0)}
\end{equation*}
holds for all $h \in H$ and $v \in V$. A Yetter-Drinfeld module $V$ over $H$ becomes an object of the center of the category $H\mbox{-{\sf Mod}}$ of left $H$-modules by
\begin{equation}
  \label{eq:YD-half-br}
  \sigma_{V,X}: V \otimes X \to X \otimes V,
  \quad v \otimes x \mapsto v_{(-1)} x \otimes v_{(0)}
\end{equation}
for $X \in H\mbox{-{\sf Mod}}$. This construction establishes an isomorphism between the category ${}^H_H \mathcal{YD}$ of Yetter-Drinfeld $H$-modules and $\mathcal{Z}(H\mbox{-{\sf Mod}})$.

If $\mathcal{C} = H\mbox{-{\sf mod}}$ is the category of finite-dimensional left $H$-modules, then $\mathcal{Z}(\mathcal{C})$ can be identified with the category ${}^H_H \mathcal{YD}_{\text{\sf fd}}$ of finite-dimensional Yetter-Drinfeld $H$-modules. Under the identification, the forgetful functor $U: \mathcal{Z}(\mathcal{C}) \to \mathcal{C}$ corresponds to the functor forgetting the left $H$-comodule structure. A right adjoint $R$ of $U$ is given as follows: As a vector space, $R(V) = H \otimes_k V$ for $V \in \mathcal{C}$. The action and the coaction of $H$ on $R(V)$ are given by
\begin{equation*}
  h \cdot (a \otimes v) = h_{(1)} a S(h_{(3)}) \otimes h_{(2)} v
  \quad \text{and} \quad
  a \otimes v \mapsto a_{(1)} \otimes a_{(2)} \otimes v
\end{equation*}
for $a, h \in H$ and $v \in V$. Thus $A = U R(k)$, where $k$ is the trivial $H$-module, can be identified with $H$ as a vector space. The action of $H$ on $A = H$ is the adjoint action $\triangleright$ given by $h \triangleright a = h_{(1)} a S(h_{(2)})$ for $h \in H$ and $a \in A$, and the algebra structure of $A$ is the same as $H$ (see, {\it e.g.}, \cite{2013arXiv1309.4539S} and \cite{2014arXiv1402.3482S} for details). By~\eqref{eq:YD-half-br} and~\eqref{eq:can-act-by-half-br}, the canonical action is given by
\begin{equation}
  \label{eq:can-act-Hopf}
  \rho_X(a \otimes x) = a_{(1)} x \otimes \varepsilon(a_{(2)}) = a x
\end{equation}
for $a \in A$ and $x \in X \in H\mbox{-{\sf mod}}$.

Under the identification $\Hom_k(k, A) \cong A$, the vector space $\CE(\mathcal{C}) \subset A$ coincides with the center of $H$. The bijection \eqref{eq:CE-and-End-id} corresponds to the well-known fact that the center of $H$ is canonically isomorphic to the endomorphism algebra of $\id_{\mathcal{C}}$. By the definition of the duality functor on $\mathcal{C}$, the antipodal map is given by
\begin{equation*}
  \mathfrak{S}(a) = S^{-1}(a) \quad (a \in \CE(\mathcal{C})).
\end{equation*}

An element of $\CF(\mathcal{C})$ is an $H$-linear map $A \to k$. Thus,
\begin{align*}
  \CF(\mathcal{C}) & = \{ f \in H^* \mid \text{$f(h_{(1)} a S(h_{(2)})) = \varepsilon(h) f(a)$ for all $a, h \in H$} \} \\
  & = \{ f \in H^* \mid \text{$f(b a) = f(a S^2(b))$ for all $a, b \in H$} \}.
\end{align*}
The product of $\CF(\mathcal{C})$ is given by the convolution product. To see this, we note that the comultiplication of the comonad $\bar{Z} = U R$ is given by
\begin{equation*}
  \delta_V: \bar{Z}(V) \to \bar{Z}{}^2(V),
  \quad a \otimes v \mapsto a_{(1)} \otimes a_{(2)} \otimes v
  \quad (a \in H, v \in V).
\end{equation*}
Thus, for all $f, g \in \CF(\mathcal{C})$ and $a \in A$, we have
\begin{equation*}
  \langle f \star g, a \rangle
  = \langle f \circ \bar{Z}(g), a_{(1)} \otimes a_{(2)} \rangle
  = \langle f, a_{(1)} \rangle \langle g, a_{(2)} \rangle.
\end{equation*}

Now we suppose that $H$ has a {\em pivotal element} $g \in H$, {\it i.e.}, an invertible element such that $\Delta(g) = g \otimes g$ and $S^2(h) = g h g^{-1}$ for all $h \in H$. Then $\mathcal{C}$ is a pivotal monoidal category with pivotal structure
\begin{equation*}
  j_X: X \to X^{**} \quad (X \in \mathcal{C}),
  \quad \langle j_X(x), p \rangle = \langle p, g x \rangle
  \quad (x \in X, p \in X^*).
\end{equation*}
By~\eqref{eq:can-act-Hopf}, the internal character of $X \in \mathcal{C}$ is given by
\begin{equation*}
  \langle \mathsf{ch}(X), a \rangle
  = \Trace_X(g a)
  \quad (a \in A),
\end{equation*}
where $\Trace_X(h)$ is the trace of the action of $h \in H$ on $X$.

\begin{example}[the Taft algebra]
  \label{ex:Taft}
  Let $N > 1$ be an integer, and let $\omega$ be a primitive $N$-th root of unity.
  The {\em Taft algebra} $T(\omega)$ is the algebra generated by $g$ and $x$ subject to the relations $g^N = 1$, $x^N = 0$ and $g x = \omega x g$.
  The algebra $T(\omega)$ has a Hopf algebra structure determined by
  \begin{gather*}
    \Delta(g) = g \otimes g, \quad
    \Delta(x) = x \otimes g + 1 \otimes x, \\
    \varepsilon(g) = 1, \quad
    \varepsilon(x) = 0, \quad 
    S(g) = g^{-1}, \quad
    S(x) = - x g^{-1}.
  \end{gather*}
  The set $\{ g^i x^j \mid i, j = 0, \dotsc, N - 1 \}$ is a basis of $T(\omega)$.
  If $f: T(\omega) \to k$ is a class function, then we have
  \begin{equation*}
    \langle f, g^i x^j \rangle
    = \langle f, g g^i x^j g^{-1}\rangle
    = \omega^{-j} \langle f, g^i x^j \rangle
  \end{equation*}
  for $i, j = 0, \dotsc, N - 1$. Thus $\langle f, g^i x^j \rangle = 0$ whenever $j > 0$.
  Conversely, a linear functional $f$ on $T(\omega)$ satisfying this property is a class function.
  Thus, in conclusion, the set of class functions on $T(\omega)$ is given by
  \begin{equation*}
    \CF(T(\omega)\mbox{-{\sf mod}})
    = \mathrm{span}_k \, \{ \alpha_i \mid i = 0, \dotsc, N - 1 \},
  \end{equation*}
  where $\alpha_i: T(\omega) \to k$ is the algebra map such that $\langle \alpha_i, g \rangle = \omega^i$ and $\langle \alpha_i, x \rangle = 0$.
\end{example}

\section{Linear independence of irreducible characters}
\label{sec:indep}

\subsection{Finite tensor categories}
\label{subsec:FTC}

Throughout this section, we work over an algebraically closed field $k$.
A {\em finite abelian category} (over $k$) is a $k$-linear category that is equivalent to $A\mbox{-{\sf mod}}$ for some finite-dimensional $k$-algebra $A$.
A {\em finite tensor category} \cite{MR2119143} is a rigid monoidal category $\mathcal{C}$ such that
\begin{enumerate}
\item $\mathcal{C}$ is a finite abelian category,
\item the tensor product $\otimes: \mathcal{C} \times \mathcal{C} \to \mathcal{C}$ is $k$-linear in each variable, and
\item $\End_{\mathcal{C}}(\unitobj) \cong k$ as algebras.
\end{enumerate}
In view of (3), we often identify $\End_{\mathcal{C}}(\unitobj)$ with $k$.
Thus, in particular, the pivotal trace and the pivotal dimension in a pivotal finite tensor category are regarded as elements of $k$.

For a finite abelian category $\mathcal{A}$, we denote by $\Gr(\mathcal{A})$ its Grothendieck group and set
$\Gr_k(\mathcal{A}) = k \otimes_{\mathbb{Z}} \Gr(\mathcal{A})$. It is well-known that the isomorphism classes of simple objects of $\mathcal{A}$ is a basis of $\Gr_k(\mathcal{A})$.
If $\mathcal{C}$ is a finite tensor category, then $\Gr_k(\mathcal{C})$ is an algebra with respect to the multiplication $[V] \cdot [W] = [V \otimes W]$.
Thus, if this is the case, we call $\Gr_k(\mathcal{C})$ the {\em Grothendieck algebra}.

Following \cite[\S5]{MR1862634}, the coend \eqref{eq:Hopf-monad-Z} exists for all $V \in \mathcal{C}$ if $\mathcal{C}$ is a finite tensor category (see also the discussion in \S\ref{subsec:integ-coref}).
In the rest of this paper, we study the internal characters in a finite tensor category and, especially, its relation to the Grothendieck algebra.

\begin{notation*}
  For convenience, we gather notations used in \S\ref{sec:internal-ch}.
  First, we denote by $U: \mathcal{Z}(\mathcal{C}) \to \mathcal{C}$ and $R$ the forgetful functor and its right adjoint, respectively.
  The functor $\bar{Z} = U R$ is a Hopf comonad on $\mathcal{C}$.
  Since $\bar{Z}$ can be given by \eqref{eq:Hopf-comonad-Z},
  there is a universal dinatural transformation
  \begin{equation*}
    \pi_{V; X}: \bar{Z}(V) \to X \otimes V \otimes X^* \quad (V, X \in \mathcal{C}).
  \end{equation*}
  The comultiplication $\delta$, the counit $\varepsilon$ and the monoidal structure
  \begin{equation*}
    \bar{Z}_0: \unitobj \to \bar{Z}(\unitobj)
    \quad \text{and}
    \quad \bar{Z}_2(X, Y): \bar{Z}(X) \otimes \bar{Z}(Y) \to \bar{Z}(X \otimes Y)
    \quad (X, Y \in \mathcal{C})
  \end{equation*}
  of $\bar{Z}$ are expressed in terms of the universal dinatural transformation in a similar way as the central Hopf monad; see \eqref{eq:Hopf-comonad-Z-comult}--\eqref{eq:adj-obj-mult}.

  The object $A := \bar{Z}(\unitobj)$ is an algebra in $\mathcal{C}$, called the {\em adjoint algebra},
  with multiplication $m = \bar{Z}_2(\unitobj, \unitobj)$
  and unit $u = \bar{Z}_0$.
  The morphism $\pi_{\unitobj; X}$ induces the {\em canonical action} $\rho_X: A \otimes X \to X$ for $X \in \mathcal{C}$,
  and the {\em internal character} $\mathsf{ch}(X): A \to \unitobj$ of $X$ is defined to be the partial pivotal trace of $\rho_X$ (if $\mathcal{C}$ is pivotal).

  Finally, we set $\CE(\mathcal{C}) = \Hom_{\mathcal{C}}(\unitobj, A)$ and $\CF(\mathcal{C}) = \Hom_{\mathcal{C}}(A, \unitobj)$.
  It is obvious that the multiplications of $\CE(\mathcal{C})$ and $\CF(\mathcal{C})$ are $k$-linear.
  Thus these monoids are in fact finite-dimensional algebras.
  The algebra $\CE(\mathcal{C})$ is always commutative, while $\CF(\mathcal{C})$ may not.
\end{notation*}

\subsection{Linear independence of irreducible characters}

Let $\mathcal{C}$ be a finite tensor category over $k$ endowed with a pivotal structure $j: \id_{\mathcal{C}} \to (-)^{**}$, and let $\{ V_0, \dotsc, V_m \}$ be a complete set of representatives of isomorphism classes of simple objects of $\mathcal{C}$.
An element of the set
\begin{equation*}
  \mathrm{Irr}(\mathcal{C}) := \{ \mathsf{ch}(V_0), \dotsc, \mathsf{ch}(V_m) \} \subset \CF(\mathcal{C})
\end{equation*}
is called an {\em irreducible character}. The main result of this section is:

\begin{theorem}
  \label{thm:irr-ch-indep}
  The set $\mathrm{Irr}(\mathcal{C})$ is linearly independent in $\CF(\mathcal{C})$.
\end{theorem}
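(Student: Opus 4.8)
The plan is to mimic the classical proof of linear independence of irreducible characters of a finite-dimensional Hopf algebra, but carried out internally in $\mathcal{C}$. The essential point is that internal characters vanish on a suitable ``radical'' part of $A$, so that testing them against minimal idempotents of a semisimple quotient detects each $V_i$ separately. Concretely, I would first construct an algebra $A^{\mathrm{ss}}$ in $\mathcal{C}$ together with an epimorphism $q\colon A\to A^{\mathrm{ss}}$ of algebras such that $A^{\mathrm{ss}}$, viewed via the canonical action, acts on each simple object $V_i$ through a map $\rho^{\mathrm{ss}}_{V_i}\colon A^{\mathrm{ss}}\otimes V_i\to V_i$ with $\rho_{V_i}=\rho^{\mathrm{ss}}_{V_i}\circ(q\otimes\id_{V_i})$. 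When $\mathcal{C}=H\text{-}\mathsf{mod}$ this $A^{\mathrm{ss}}$ should be $H/\mathrm{Rad}(H)$ with its adjoint action; in general I expect $A^{\mathrm{ss}}$ to be built from the coend description $A=\int_{X}X\otimes X^{*}$ by restricting the (di)natural transformation $\pi_{\unitobj;X}$ to semisimple $X$, or equivalently as the image of $A$ under an idempotent endomorphism coming from projecting onto the semisimplification. Granting such a $q$, every internal character factors as $\mathsf{ch}(X)=\mathsf{ch}^{\mathrm{ss}}(X)\circ q$ for a class function $\mathsf{ch}^{\mathrm{ss}}(X)\colon A^{\mathrm{ss}}\to\unitobj$, because the partial pivotal trace of $\rho_X$ only sees the action, which already factors through $q$.

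Next I would analyze $A^{\mathrm{ss}}$ directly. Since $k$ is algebraically closed and $\mathcal{C}$ is finite, the semisimple quotient should decompose as an internal direct sum indexed by the simple objects: there is an isomorphism of algebras $A^{\mathrm{ss}}\cong\bigoplus_{i=0}^{m}\,{}^{*}V_i\otimes\,^{\phantom{*}}\!V_i$ or, more precisely, $A^{\mathrm{ss}}$ should carry orthogonal idempotents $e_0,\dots,e_m\in\Hom_{\mathcal{C}}(\unitobj,A^{\mathrm{ss}})$ with $e_i$ acting as the identity on $V_i$ and as zero on $V_j$ for $j\ne i$ (this is the internal analogue of the central primitive idempotents of $H/\mathrm{Rad}(H)\cong\prod_i\mathrm{End}_k(V_i)$). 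The existence of such $e_i$ follows from the block decomposition of the finite-dimensional algebra underlying $\mathcal{C}$ together with the fact that $A^{\mathrm{ss}}$ represents the functor $X\mapsto\End(\id)$ on the semisimplification. Pairing $\mathsf{ch}^{\mathrm{ss}}(V_i)$ with $e_j$ then gives $\langle\mathsf{ch}^{\mathrm{ss}}(V_i),e_j\rangle=\dim(V_i)\,\delta_{ij}\in k=\End_{\mathcal{C}}(\unitobj)$, since the partial pivotal trace of the identity action on $V_i$ is exactly the pivotal dimension $\dim(V_i)$.

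Finally, suppose $\sum_{i=0}^{m}\lambda_i\,\mathsf{ch}(V_i)=0$ in $\CF(\mathcal{C})$ with $\lambda_i\in k$. Composing with $q$ is injective on the span of the internal characters (they already factor through $q$), so $\sum_i\lambda_i\,\mathsf{ch}^{\mathrm{ss}}(V_i)=0$ as a map $A^{\mathrm{ss}}\to\unitobj$. Evaluating on $e_j$ yields $\lambda_j\dim(V_j)=0$ for each $j$. Because the pivotal dimensions $\dim(V_j)$ of the simple objects of a pivotal finite tensor category over an algebraically closed field are all nonzero — this is the standard fact that a nonzero object in such a category has nonzero (``quantum'') dimension, equivalently the evaluation/coevaluation for $V_j$ cannot be trivial since $V_j$ is invertible in the semisimplified Grothendieck setting at the level of traces — we conclude $\lambda_j=0$ for all $j$, proving linear independence.

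\textbf{Main obstacle.} The nontrivial part is the construction of $A^{\mathrm{ss}}$ and the epimorphism $q$, together with the verification that $q$ is a morphism of algebras compatible with all the canonical actions and that it is surjective with the expected block/idempotent structure; this requires working carefully with the coend $A=\int_{X}X\otimes X^{*}$ and the dinatural family $\pi_{\unitobj;X}$, isolating the contribution of semisimple objects. A secondary point, needed only at the very end, is the nonvanishing of $\dim(V_j)$; while this is standard for fusion categories, in the merely pivotal finite tensor category setting one should either invoke it as a known fact or restrict attention to the simple objects whose dimension is nonzero and note that $\mathsf{ch}(V_j)$ with $\dim(V_j)=0$ still must be handled — but in fact the idempotent $e_j$ argument shows $\lambda_j\dim(V_j)=0$, and one separately argues that the characters of dimension-zero simples are still linearly independent from the others by a refinement using the full matrix units $e^{(j)}_{kl}$ rather than just the block idempotent $e_j=\sum_k e^{(j)}_{kk}$, pairing instead against a single matrix unit so that the relevant scalar is $1$ rather than $\dim(V_j)$. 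I expect the author to take the latter, more robust route, so that the hypothesis reduces to pivotality alone.
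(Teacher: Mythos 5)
Your overall strategy --- factor every irreducible character through an epimorphism $q: A \to A^{\mathrm{ss}}$ onto a ``semisimple quotient'' built from the semisimple objects --- is exactly the paper's, and your identification of the construction of $q$ and the proof that it is epic as the main obstacle is accurate: that is indeed the technical heart of the argument (the paper proves it by realizing the socle functor $t_{\mathcal{S}}$ inside $\LEX(\mathcal{C}) \simeq \mathcal{C} \boxtimes \mathcal{C}$, noting that the inclusion $t_{\mathcal{S}} \hookrightarrow \id_{\mathcal{C}}$ is monic there, and transporting this through the exact action of $\mathcal{C} \boxtimes \mathcal{C}^{\rev}$ and the duality functor to get that $q$ is epic). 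However, your concluding step is genuinely flawed. Pairing with the block idempotents $e_j \in \Hom_{\mathcal{C}}(\unitobj, A^{\mathrm{ss}})$ only yields $\lambda_j \dim(V_j) = 0$, and the ``standard fact'' you invoke --- that simple objects of a pivotal finite tensor category over an algebraically closed field have nonzero pivotal dimension --- is false. It holds for pivotal fusion categories in characteristic zero, but fails in positive characteristic and for non-semisimple categories (e.g.\ the Steinberg module of a small quantum group has quantum dimension zero); the paper's own care with ``non-degenerate'' fusion categories in Section 6 reflects precisely this. Your proposed repair via individual matrix units $e^{(j)}_{kl}$ does not transfer to the categorical setting: $\Hom_{\mathcal{C}}(\unitobj, A^{\mathrm{ss}})$ consists only of the \emph{invariant} (``central'') elements of $A^{\mathrm{ss}}$, so a single non-central matrix unit is simply not available as a morphism $\unitobj \to A^{\mathrm{ss}}$ to pair against.

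The correct finish avoids evaluating against elements altogether. Once one knows $A^{\mathrm{ss}} = \bigoplus_{i=0}^{m} V_i \otimes V_i^*$ with the universal dinatural transformation given by the projections, one has
\begin{equation*}
  \Hom_{\mathcal{C}}(A^{\mathrm{ss}}, \unitobj) \cong \bigoplus_{i=0}^{m} \Hom_{\mathcal{C}}(V_i \otimes V_i^*, \unitobj),
\end{equation*}
each summand one-dimensional by Schur's lemma, and $\mathsf{ch}(V_i)$ is the image under $\Hom_{\mathcal{C}}(q, \unitobj)$ of $\widetilde{\eval}_{V_i}$, which is a \emph{nonzero} element of the $i$-th summand (it is the composite of the pivotal isomorphism $j_{V_i}$ with an evaluation, hence nonzero even when its trace $\dim(V_i)$ vanishes). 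Linear independence in $\Hom_{\mathcal{C}}(A^{\mathrm{ss}}, \unitobj)$ is then immediate from the direct sum decomposition, and the epimorphy of $q$ makes $\Hom_{\mathcal{C}}(q, \unitobj)$ injective, transferring independence to $\CF(\mathcal{C})$. So your argument needs two repairs: supply the proof that $q$ is epic, and replace the idempotent/dimension pairing by the Schur-lemma decomposition of the Hom space.
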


We give some immediate consequences of this theorem.
Recall that the internal character $\mathsf{ch}(X)$ of $X \in \mathcal{C}$ is defined to be the partial pivotal trace of $\rho_X$.
Since $\rho_X$ is natural in $X$, and since the partial pivotal trace is additive with respect to exact sequences, the linear map
\begin{equation*}
  \mathsf{ch}: \Gr_k(\mathcal{C}) \to \CF(\mathcal{C}),
  \quad [X] \mapsto \mathsf{ch}(X)
  \quad (X \in \mathcal{C})
\end{equation*}
is well-defined. Moreover, Theorem~\ref{thm:ch-mult} implies that this map is a homomorphism of algebras.
Since the set $\{ [V_0], \dotsc, [V_m] \}$ is a basis of $\Gr_k(\mathcal{C})$, we have:

\begin{corollary}
  \label{cor:irr-ch-indep-cor-1}
  The map $\mathsf{ch}: \Gr_k(\mathcal{C}) \to \CF(\mathcal{C})$ is an injective $k$-algebra map.
\end{corollary}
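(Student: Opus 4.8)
The plan is to mimic the classical argument for the linear independence of irreducible characters of a finite-dimensional algebra (or Hopf algebra), using the canonical action $\rho$ as a substitute for the regular representation. The key difficulty is that $\mathcal{C}$ need not be semisimple, so the adjoint algebra $A$ is too big to detect simple objects directly; I would first replace it by a suitable semisimple quotient. Concretely, I would introduce an algebra $A^{\mathrm{ss}} \in \mathcal{C}$, playing the role of $H/\mathrm{Rad}(H)$ in the Hopf case, together with an epimorphism $q \colon A \to A^{\mathrm{ss}}$ of algebras in $\mathcal{C}$. The object $A^{\mathrm{ss}}$ should be characterized so that every morphism $A \to \unitobj$ factoring through the ``radical'' vanishes, and, more importantly, so that each simple $A$-module structure on $V_i$ factors through $q$; the canonical action $\rho_{V_i} \colon A \otimes V_i \to V_i$ is such a structure by Definition~\ref{def:can-act}, so $\mathsf{ch}(V_i)$ factors as $\overline{\mathsf{ch}}(V_i) \circ q$ for a class function $\overline{\mathsf{ch}}(V_i)$ on $A^{\mathrm{ss}}$.

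Next I would show that the induced actions $\overline{\rho}_{V_i}$ exhibit $A^{\mathrm{ss}}$, as an object of $\mathcal{C}$ acted on by itself, as a product $A^{\mathrm{ss}} \cong \bigoplus_{i} \iEnd(V_i)$ in an appropriate sense — more precisely, that the morphisms $A^{\mathrm{ss}} \to \iEnd(V_i)$ coming from the actions assemble into an isomorphism. This is where semisimplicity of the relevant endomorphism algebra enters: because $k$ is algebraically closed and $\End_{\mathcal{C}}(\unitobj) = k$, the $k$-algebra $\End_{\mathcal{C}}(\mathcal{P})$ of a projective generator $\mathcal{P}$ is finite-dimensional, and the image of $A$ in the endomorphism algebra of any module is a quotient algebra; the ``$\mathrm{ss}$'' construction takes $A$ to its image in $\bigoplus_i \End_{\mathcal{C}}(V_i\text{-part})$, forcing the product decomposition by the Artin–Wedderburn theorem applied internally. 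Having this, the class functions $\overline{\mathsf{ch}}(V_i)$ are, up to the identification above, the (pivotal) traces on the distinct matrix blocks, which are manifestly linearly independent — a nonzero linear combination $\sum_i \lambda_i \overline{\mathsf{ch}}(V_i)$ evaluated suitably on the $i$-th block picks out $\lambda_i \cdot \dim(V_i)$ or, more robustly, $\lambda_i \neq 0$ gives a nonzero morphism — so the $\mathsf{ch}(V_i) = \overline{\mathsf{ch}}(V_i)\circ q$ are linearly independent in $\CF(\mathcal{C})$ because $q$ is epi.

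I expect the main obstacle to be the rigorous construction of $A^{\mathrm{ss}}$ and the proof that it is a product of ``internal matrix algebras,'' purely in categorical terms: one must make sense of $\iEnd(V_i)$, show the canonical action descends, and verify the internal analogue of semisimplicity. The cleanest route is probably to work with a projective generator $\mathcal{P} = \bigoplus_i P_i$ and the finite-dimensional $k$-algebra $E = \End_{\mathcal{C}}(\mathcal{P})$, translating statements about $\mathcal{C}$ into statements about $E$-modules via $\Hom_{\mathcal{C}}(\mathcal{P}, -)$; then $A$ corresponds to a module-algebra over $E$ and $A^{\mathrm{ss}}$ to its semisimplification, where the classical linear independence argument runs directly. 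The pivotal structure is needed only at the very end, to guarantee that the trace functionals $\overline{\mathsf{ch}}(V_i)$ are nonzero and pairwise distinct; this should follow because partial pivotal trace is a faithful-enough operation on the matrix blocks, as $\dim(V_i) \neq 0$ need not hold but the block-diagonal structure still separates the functionals.
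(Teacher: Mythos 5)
Your overall strategy coincides with the paper's: both reduce the injectivity of $\mathsf{ch}$ to the linear independence of the irreducible characters, and both establish that independence by factoring each $\mathsf{ch}(V_i)$ through an epimorphism $q: A \to A^{\mathrm{ss}}$ onto a semisimple quotient $A^{\mathrm{ss}} \cong \bigoplus_{i} V_i \otimes V_i^*$, where the induced functionals $\widetilde{\eval}_{V_i}$ lie in distinct one-dimensional summands $\Hom_{\mathcal{C}}(V_i \otimes V_i^*, \unitobj)$ and hence are independent; since $q$ is epic, $\Hom_{\mathcal{C}}(q,\unitobj)$ is injective and the independence is preserved. Your closing observation that the pivotal structure is needed only to make $\widetilde{\eval}_{V_i}$ nonzero, not to make $\dim(V_i)$ nonzero, is also exactly right.

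The genuine gap is the step you flag and then wave past: the existence of the epimorphism $q$, i.e.\ the surjectivity of the morphism $A \to \bigoplus_i V_i \otimes V_i^*$ assembled from the actions $\rho_{V_i}$. ``Artin--Wedderburn applied internally'' is not an available theorem: $A$ is an algebra in $\mathcal{C}$, not a $k$-algebra, and its action on $V_i$ is a morphism $A \otimes V_i \to V_i$ in $\mathcal{C}$ rather than a $k$-algebra map $A \to \End_k(V_i)$, so no density-type argument can be quoted. Your fallback via a projective generator $\mathcal{P}$ and $E = \End_{\mathcal{C}}(\mathcal{P})$ does not repair this, because the equivalence between $\mathcal{C}$ and the category of $E$-modules is only $k$-linear and does not transport the tensor product, so $A$ does not become an ordinary $k$-algebra on which ``the classical argument runs directly.'' Note also that defining $A^{\mathrm{ss}}$ as the \emph{image} of $A$ in $\bigoplus_i V_i \otimes V_i^*$ makes $q$ epic for free but destroys the argument: a combination $\sum_i \lambda_i \widetilde{\eval}_{V_i}$ can vanish on a proper subobject, so surjectivity onto the full direct sum is genuinely needed. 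The paper supplies this in Lemmas~\ref{lem:integ-coref-subcat-1}--\ref{lem:integ-coref-subcat-3}, by realizing $A^{\mathrm{ss}}$ as the end $\int_{X \in \mathcal{S}} X \otimes X^*$ over the subcategory $\mathcal{S}$ of semisimple objects and dualizing a monomorphism of coends obtained from the inclusion $t_{\mathcal{S}} \hookrightarrow \id_{\mathcal{C}}$ in $\LEX(\mathcal{C})$ together with the exactness of the Deligne tensor action; some argument of this kind must be supplied for your proof to close. Finally, a smaller omission: the corollary also asserts that $\mathsf{ch}$ is an algebra map and is well defined on $\Gr_k(\mathcal{C})$; both are needed and follow from Theorem~\ref{thm:ch-mult} and the additivity of the partial pivotal trace on exact sequences, but your proposal does not address them.
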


By Theorem~\ref{thm:class-ft-induc}, we also have the following corollary:

\begin{corollary}
  \label{cor:irr-ch-indep-cor-2}
  The algebra $\Gr_k(\mathcal{C})$ can be embedded into $\End_{\mathcal{Z}(\mathcal{C})}(R(\unitobj))$.
\end{corollary}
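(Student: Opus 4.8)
The plan is to obtain the desired embedding simply by composing two facts already in hand: the injective algebra map $\mathsf{ch}$ of Corollary~\ref{cor:irr-ch-indep-cor-1} and the isomorphism of Theorem~\ref{thm:class-ft-induc}. Concretely, write $\Theta$ for the map
\[
  \Theta: \CF(\mathcal{C}) \to \End_{\mathcal{Z}(\mathcal{C})}(R(\unitobj)),
  \quad f \mapsto \bar{Z}(f) \circ \delta_{\unitobj},
\]
and consider the composite $\Theta \circ \mathsf{ch}: \Gr_k(\mathcal{C}) \to \End_{\mathcal{Z}(\mathcal{C})}(R(\unitobj))$. I would argue that this composite is an injective homomorphism of $k$-algebras, which is precisely the assertion of the corollary.

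First I would invoke Corollary~\ref{cor:irr-ch-indep-cor-1} to record that $\mathsf{ch}$ is an injective homomorphism of $k$-algebras from $\Gr_k(\mathcal{C})$ to $\CF(\mathcal{C})$. Next, Theorem~\ref{thm:class-ft-induc} gives that $\Theta$ is an isomorphism of monoids, where $\CF(\mathcal{C})$ carries the product $\star$ and $\End_{\mathcal{Z}(\mathcal{C})}(R(\unitobj))$ carries composition of morphisms. Both sides are finite-dimensional $k$-algebras: the former as noted in the Notation collecting the results of \S\ref{sec:internal-ch}, and the latter because it is an endomorphism space of an object in a $k$-linear category, made into an algebra under composition.

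The one point that needs verification --- and the only place where anything must be checked --- is that $\Theta$ is not merely a monoid isomorphism but an isomorphism of $k$-algebras, that is, that $\Theta$ is $k$-linear. This is immediate once one observes that $\bar{Z} = U R$ is a $k$-linear functor, being the composite of the $k$-linear functors $U$ and $R$ between the $k$-linear categories $\mathcal{C}$ and $\mathcal{Z}(\mathcal{C})$. Consequently $f \mapsto \bar{Z}(f)$ is $k$-linear, and post-composing by the fixed morphism $\delta_{\unitobj}$ preserves $k$-linearity; hence $\Theta$ is a $k$-linear monoid isomorphism, i.e.\ an isomorphism of $k$-algebras.

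Finally, composing the injective $k$-algebra map $\mathsf{ch}$ with the $k$-algebra isomorphism $\Theta$ yields an injective homomorphism of $k$-algebras $\Gr_k(\mathcal{C}) \hookrightarrow \End_{\mathcal{Z}(\mathcal{C})}(R(\unitobj))$, which establishes the corollary. I expect no genuine obstacle: the substance is entirely contained in Theorem~\ref{thm:irr-ch-indep} (through Corollary~\ref{cor:irr-ch-indep-cor-1}) and in Theorem~\ref{thm:class-ft-induc}, and the corollary is a formal consequence of gluing these together once the routine $k$-linearity of $\Theta$ has been noted.
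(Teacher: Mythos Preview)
Your proposal is correct and is exactly the paper's approach: the corollary is stated immediately after Corollary~\ref{cor:irr-ch-indep-cor-1} with the single sentence ``By Theorem~\ref{thm:class-ft-induc}, we also have the following corollary,'' so the embedding is precisely the composite $\Theta \circ \mathsf{ch}$ you describe. Your explicit check of the $k$-linearity of $\Theta$ is a routine detail the paper leaves implicit, but otherwise the arguments coincide.
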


Before giving a proof of Theorem~\ref{thm:irr-ch-indep}, we observe two cases that the linear independence of the irreducible characters is relatively obvious.

\begin{example}
  \label{ex:indep-irr-ch-fusion}
  A {\em fusion category} \cite{MR2183279} is a semisimple finite tensor category.
  If $\mathcal{C}$ in the above is a fusion category, then, by the argument of \cite[\S5.1.3]{MR1862634},
  \begin{equation*}
    \bar{Z}(V) = \bigoplus_{i = 0}^m V_i \otimes V \otimes V_i^*
  \end{equation*}
  as an object in $\mathcal{C}$ and the morphism $\pi_{V; X}$ is just the projection if $X$ is one of $V_0, \dotsc, V_m$. Hence, by~\eqref{eq:character-pi}, $\mathsf{ch}(V_i) \in \CF(\mathcal{C})$ corresponds to $\widetilde{\eval}_{V_i}$ via
  \begin{equation*}
    \CF(\mathcal{C}) = \Hom_{\mathcal{C}}(A, \unitobj) \cong \bigoplus_{i = 0}^m \Hom_{\mathcal{C}}(V_i \otimes V_i^*, \unitobj).
  \end{equation*}
  By Schur's lemma, $\Hom_{\mathcal{C}}(V_i \otimes V_i^*, \unitobj)$ is one-dimensional. Thus the set $\mathrm{Irr}(\mathcal{C})$ is not only linearly independent, but also a basis of $\CF(\mathcal{C})$.
\end{example}

\begin{remark}
  \label{rem:Ost-and-NY}
  Thus, for a pivotal fusion category $\mathcal{C}$, we have an isomorphism
  \begin{equation*}
    \Gr_k(\mathcal{C}) \cong \End_{\mathcal{Z}(\mathcal{C})}(R(\unitobj))
  \end{equation*}
  of algebras by Theorem~\ref{thm:class-ft-induc}.
  We shall note some related results:
  First, in the case where $k$ is of characteristic zero, the existence of such an isomorphism has been showed by Ostrik \cite[Corollary 2.16]{2013arXiv1309.4822O}.
  His method cannot be applied in the case of positive characteristic.
  Our construction of the isomorphism is somewhat more canonical and also works in such a case.
  We also note that Neshveyev and Yamashita \cite{2015arXiv150107390N} have established such an isomorphism for $C^*$-tensor categories possibly with infinitely many isomorphism classes of simple objects.
  Our approach is basically same as \cite{2015arXiv150107390N} if it is limited to $C^*$-fusion categories.
\end{remark}

\begin{example}
  \label{ex:indep-irr-ch-Hopf}
  Let $H$ be a finite-dimensional Hopf algebra over $k$ with pivotal element $g$,
  and let $V_0, \dotsc, V_m$ be the complete set of representatives of isomorphism classes of simple left $H$-modules.
  As explained in \S\ref{subsec:Hopf-alg}, then $\mathcal{C} = H\mbox{-{\sf mod}}$ is a pivotal finite tensor category.
  Since $g$ is invertible, Theorem~\ref{thm:irr-ch-indep} is equivalent to that
  \begin{equation}
    \label{eq:indep-irr-ch-Hopf}
    \text{$\{ \Trace_{V_i} \}_{i = 0}^m$ is linearly independent in $H^*$}
  \end{equation}
  in this particular case. This statement is well-known to be true and can be proved as follows: Set $H^{\mathrm{ss}} = H/J$, where $J$ is the Jacobson radical of $H$. By the Artin-Wedderburn theorem, there is an isomorphism
  \begin{equation}
    \label{eq:ss-quot-decomp}
    H^{\mathrm{ss}} \cong \End_k(V_0) \oplus \dotsb \oplus \End_k(V_m)
  \end{equation}
  of algebras. Since $\Trace_{V_i}$ coincides with the composition
  \begin{equation}
    \label{eq:ss-quot-ch}
    H \xrightarrow{\ \text{quotient} \ }
    H^{\mathrm{ss}}
    \cong \bigoplus_{i = 0}^m \End_k(V_i)
    \xrightarrow{\ \text{projection} \ }
    \End_k(V_i)
    \xrightarrow{\ \text{trace} \ } k,
  \end{equation}
  the assertion~\eqref{eq:indep-irr-ch-Hopf} follows.
\end{example}

Our proof of Theorem~\ref{thm:irr-ch-indep} basically follows the above scheme.
More precisely, we will introduce an object $A^{\mathrm{ss}} \in \mathcal{C}$,
which can be considered as a category-theoretical analogue of $H^{\mathrm{ss}}$ in Example~\ref{ex:indep-irr-ch-Hopf}.
We then show that $A^{\mathrm{ss}}$ is a quotient of $A$ of the form like~\eqref{eq:ss-quot-decomp}
and every irreducible character factors through the quotient morphism $A \twoheadrightarrow A^{\mathrm{ss}}$ like~\eqref{eq:ss-quot-ch}.
Unlike Examples~\ref{ex:indep-irr-ch-fusion} and~\ref{ex:indep-irr-ch-Hopf}, there are some technical difficulties in the general setting.
To prove Theorem~\ref{thm:irr-ch-indep}, we need to go back to the proof of the existence of certain (co)ends including \eqref{eq:Hopf-monad-Z}.

\subsection{Existence of certain (co)ends}
\label{subsec:integ-coref}

Let $\mathcal{C}$ be a finite tensor category over $k$.
For $X \in \mathcal{C}$ and $K \in k\mbox{-{\sf mod}}$, their tensor product $K \cdot X \in \mathcal{C}$ (also called the copower) is defined by
\begin{equation}
  \label{eq:copower}
  \Hom_{\mathcal{C}}(K \cdot X, Y) \cong \Hom_k(K, \Hom_{\mathcal{C}}(X, Y))
\end{equation}
for $Y \in \mathcal{C}$. Let $\LEX(\mathcal{C})$ be the category of $k$-linear left exact endofunctors on $\mathcal{C}$. As explained in \cite{2014arXiv1402.3482S}, the Eilenberg-Watts theorem implies that the functor
\begin{equation}
  \label{eq:equiv-Phi}
  \Phi: \mathcal{C} \boxtimes \mathcal{C} \to \LEX(\mathcal{C}),
  \quad X \boxtimes Y \mapsto \Hom_{\mathcal{C}}(X^*, -) \cdot Y
  \quad (X, Y \in \mathcal{C})
\end{equation}
is an equivalence of $k$-linear categories, where $\boxtimes$ means the Deligne tensor product of $k$-linear abelian categories \cite[\S5]{MR1106898}. As shown in \cite{2014arXiv1402.3482S}, a quasi-inverse of $\Phi$, which we denote by $\overline{\Phi}$, is given by
\begin{equation}
  \label{eq:coend-Phi-inv}
  \overline{\Phi}(F) = \int^{X \in \mathcal{C}} {}^* \! X \boxtimes F(X)
\end{equation}
for $F \in \LEX(\mathcal{C})$. Thus, there are natural isomorphisms
\begin{equation}
  \label{eq:coend-Yoneda}
  F \cong \Phi \overline{\Phi}(F)
  \cong \int^{X \in \mathcal{C}} \Phi({}^* \! X \boxtimes F(X))
  = \int^{X \in \mathcal{C}} \Hom_{\mathcal{C}}(X, -) \cdot F(X)
\end{equation}
for $F \in \LEX(\mathcal{C})$.

Now let $\mathcal{D}$ be a full subcategory of $\mathcal{C}$ closed under taking subobjects, quotient objects and finite direct sums.
Then $\mathcal{D}$ is an abelian category such that the inclusion functor $i_{\mathcal{D}}: \mathcal{D} \hookrightarrow \mathcal{C}$ is exact.
For every $X \in \mathcal{C}$, there is the largest subobject $t_{\mathcal{D}}(X)$ of $X$ belonging to $\mathcal{D}$.
The assignment $X \mapsto t_{\mathcal{D}}(X)$ extends to a $k$-linear left exact functor from $\mathcal{C}$ to $\mathcal{D}$.
Moreover, we have
\begin{equation*}
  \Hom_{\mathcal{C}}(i_{\mathcal{D}}(V), X)
  = \Hom_{\mathcal{D}}(V, t_{\mathcal{D}}(X))
\end{equation*}
for all $V \in \mathcal{D}$ and $X \in \mathcal{C}$.
In what follows, we often omit writing $i_{\mathcal{D}}$ and regard $t_{\mathcal{D}}$ as an endofunctor on $\mathcal{C}$ for simplicity of notation.
By applying Lemma~\ref{lem:coend-adj-2} to the adjunction $i_{\mathcal{D}} \dashv t_{\mathcal{D}}$, we have
\begin{equation}
  \label{eq:coend-SS}
  t_{\mathcal{D}}
  \xrightarrow[\cong]{\ \text{\eqref{eq:coend-Yoneda}} \ }
  \int^{X \in \mathcal{C}} \Hom_{\mathcal{C}}(X, -) \cdot t_{\mathcal{D}}(X)
  \xrightarrow[\cong]{\ \eqref{eq:coend-iso-1} \ }
  \int^{X \in \mathcal{D}} \Hom_{\mathcal{C}}(X, -) \cdot X.
\end{equation}

\begin{lemma}
  \label{lem:integ-coref-subcat-1}
  There is a commutative diagram
  \begin{equation*}
    \minCDarrowwidth100pt
    \begin{CD}
      t_{\mathcal{D}}
      @>{\text{\eqref{eq:coend-SS}}}>>
      \int^{X \in \mathcal{D}} \Hom_{\mathcal{C}}(X, -) \cdot X \\
      @V{\iota}VV @VV{\phi}V \\
      \id_{\mathcal{C}}
      @>>{\text{\rm \eqref{eq:coend-Yoneda} with $F = \id_{\mathcal{C}}$}}>
      \int^{X \in \mathcal{C}} \Hom_{\mathcal{C}}(X, -) \cdot X
    \end{CD}
  \end{equation*}
  in $\LEX(\mathcal{C})$, where $\iota: t_{\mathcal{D}} \to \id_{\mathcal{C}}$ is the counit of $i_{\mathcal{D}} \dashv t_{\mathcal{D}}$ and $\phi$ is the canonical morphism obtained by the universal property of the coend.
\end{lemma}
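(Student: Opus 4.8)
The plan is to verify the commutativity of the square by composing with the universal dinatural transformations of the coends appearing in the two corners on the right, thereby reducing the identity to a statement about morphisms out of the objects $\Hom_{\mathcal{C}}(X,-)\cdot X$ for $X\in\mathcal{D}$. Let me write $j^{(\mathcal{C})}_X\colon \Hom_{\mathcal{C}}(X,-)\cdot X \to \int^{Y\in\mathcal{C}}\Hom_{\mathcal{C}}(Y,-)\cdot Y$ and $j^{(\mathcal{D})}_X\colon \Hom_{\mathcal{C}}(X,-)\cdot X \to \int^{Y\in\mathcal{D}}\Hom_{\mathcal{C}}(Y,-)\cdot Y$ for the respective universal dinatural transformations (the first ranging over $X\in\mathcal{C}$, the second over $X\in\mathcal{D}$). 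By definition $\phi$ is the unique morphism with $\phi\circ j^{(\mathcal{D})}_X = j^{(\mathcal{C})}_X$ for all $X\in\mathcal{D}$, so for the lower-left path it suffices to identify the composite $\iota\circ(\text{\eqref{eq:coend-SS}})$ after precomposition with $j^{(\mathcal{D})}_X$; since both paths are morphisms of functors, checking equality after precomposing with a generating family of dinatural transformations determines them.

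The key step is to unwind the isomorphism \eqref{eq:coend-SS}. It is the composite of the Eilenberg--Watts/Yoneda isomorphism \eqref{eq:coend-Yoneda} for $F=t_{\mathcal{D}}$ with the adjunction isomorphism \eqref{eq:coend-iso-1} coming from $i_{\mathcal{D}}\dashv t_{\mathcal{D}}$. By Lemma~\ref{lem:coend-adj-2}, applied to this very adjunction with $\Phi(A,B)=\Hom_{\mathcal{C}}(A,-)\cdot B$ (so that $FG$ is $t_{\mathcal{D}}$ viewed as an endofunctor of $\mathcal{C}$ and $\varepsilon$ is the counit $\iota$), the composite
\[
  \int^{X\in\mathcal{C}} \Hom_{\mathcal{C}}(X,-)\cdot t_{\mathcal{D}}(X)
  \xrightarrow{\ \eqref{eq:coend-iso-1}\ } \int^{X\in\mathcal{D}} \Hom_{\mathcal{C}}(X,-)\cdot X
  \xrightarrow{\ \phi\ } \int^{X\in\mathcal{C}} \Hom_{\mathcal{C}}(X,-)\cdot X
\]
equals $\int^{X\in\mathcal{C}}\Hom_{\mathcal{C}}(\id_X,\iota_X)$, that is, the map induced on coends by postcomposing with the counit $\iota_X\colon t_{\mathcal{D}}(X)\to X$. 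Thus the right-hand column $\phi$, precomposed with the top isomorphism, is exactly the functoriality of \eqref{eq:coend-Yoneda} in the "$F$" variable applied to the natural transformation $\iota\colon t_{\mathcal{D}}\to\id_{\mathcal{C}}$. On the other hand, the Yoneda-type isomorphism \eqref{eq:coend-Yoneda} is natural in $F\in\LEX(\mathcal{C})$ — this is part of its construction via the equivalences $\Phi,\overline{\Phi}$ — so the square formed by \eqref{eq:coend-Yoneda} for $F=t_{\mathcal{D}}$, \eqref{eq:coend-Yoneda} for $F=\id_{\mathcal{C}}$, the vertical $\iota$ on the left, and the induced map $\int^{X}\Hom_{\mathcal{C}}(\id_X,\iota_X)$ on the right commutes. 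Combining the two displayed facts gives precisely the asserted commutative diagram.

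The main obstacle I anticipate is purely bookkeeping: one must be careful that the "$\int^{X}\Hom_{\mathcal{C}}(\id_X,\iota_X)$" appearing from Lemma~\ref{lem:coend-adj-2} and the "functoriality of \eqref{eq:coend-Yoneda} in $F$" genuinely refer to the same morphism, which amounts to checking that the equivalence $\Phi$ (hence the Yoneda isomorphism it produces) is natural in the evident way and that the universal dinatural transformations are normalized consistently in \eqref{eq:coend-SS} and in \eqref{eq:coend-Yoneda}. Once the naturality of \eqref{eq:coend-Yoneda} in its functor argument is pinned down — which follows from the Eilenberg--Watts equivalence being an honest equivalence of categories — there is no further content, and the diagram commutes by uniqueness in the universal property of $\int^{X\in\mathcal{D}}\Hom_{\mathcal{C}}(X,-)\cdot X$.
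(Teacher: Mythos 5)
Your proposal is correct and follows essentially the same route as the paper: the square is split into a left square that commutes by naturality of \eqref{eq:coend-Yoneda} in the functor variable applied to $\iota$, and a right square handled by Lemma~\ref{lem:coend-adj-2} applied to $i_{\mathcal{D}} \dashv t_{\mathcal{D}}$ with $\Phi(A,B) = \Hom_{\mathcal{C}}(A,-)\cdot B$, which identifies $\phi$ composed with \eqref{eq:coend-iso-1} with the map induced by the counit. The extra bookkeeping you flag (consistency of the normalizations and of ``$\int^{X}\Hom_{\mathcal{C}}(\id_X,\iota_X)$'' in the two roles) is exactly what the paper's two-square diagram records, so nothing is missing.
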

\begin{proof}
  We consider the following diagram:
  \begin{equation*}
    \begin{CD}
      t_{\mathcal{D}}
      @>{\text{\eqref{eq:coend-Yoneda}}}>>
      \int^{X \in \mathcal{C}} \Hom_{\mathcal{C}}(X, -) \cdot t_{\mathcal{D}}(X)
      @>{\eqref{eq:coend-iso-1}}>>
      \int^{X \in \mathcal{D}} \Hom_{\mathcal{C}}(X, -) \cdot X\phantom{.} \\
      @V{\iota}VV @VV{\int^{X \in \mathcal{C}} \Hom_{\mathcal{C}}(\id_X, -) \cdot \iota_X}V @VV{\phi}V \\
      \id_{\mathcal{C}}
      @>{\text{\eqref{eq:coend-Yoneda}}}>>
      \int^{X \in \mathcal{C}} \Hom_{\mathcal{C}}(X, -) \cdot X
      @=
      \int^{X \in \mathcal{C}} \Hom_{\mathcal{C}}(X, -) \cdot X.
    \end{CD}
  \end{equation*}
  This diagram commutes by the naturality of \eqref{eq:coend-Yoneda} and Lemma~\ref{lem:coend-adj-2}.
  The composition along the top row is \eqref{eq:coend-SS}. Thus the proof is done.
\end{proof}

There is a right action $\triangleleft$ of $\mathcal{C} \boxtimes \mathcal{C}^{\rev}$ on $\mathcal{C}$ given by
$V \triangleleft (X \boxtimes Y) = Y \otimes V \otimes X$
for $V, X, Y \in \mathcal{C}$.
Using this action, we define the functor $\Psi: \LEX(\mathcal{C}) \times \mathcal{C} \to \mathcal{C}$ by
\begin{equation*}
  \Psi(F, V) = V \triangleleft \overline{\Phi}(F)
  \quad (F \in \LEX(\mathcal{C}), V \in \mathcal{C}).
\end{equation*}
Since the base field $k$ is algebraically closed, the action $\triangleleft$ is exact in each variable \cite[Proposition 5.13]{MR1106898}.
Thus we set
$Z(\mathcal{D}; V) = \Psi(t_{\mathcal{D}}, V)$
for $\mathcal{D}$ as above and $V \in \mathcal{C}$. Then, by~\eqref{eq:coend-Yoneda} and~\eqref{eq:coend-SS}, we have, symbolically,
\begin{equation}
  \label{eq:coend-Z-D-V}
  Z(\mathcal{D}; V) = \int^{X \in \mathcal{D}} X \otimes V \otimes {}^* \! X.
\end{equation}

\begin{lemma}
  \label{lem:integ-coref-subcat-2}
  The morphism $\phi_V: Z(\mathcal{D}; V) \to Z(\mathcal{C}; V)$ in $\mathcal{C}$ obtained by the universal property of the coend \eqref{eq:coend-Z-D-V} is monic.
\end{lemma}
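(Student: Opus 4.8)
The goal is to show that the canonical morphism $\phi_V \colon Z(\mathcal{D}; V) \to Z(\mathcal{C}; V)$ is monic. The strategy is to reduce this to a statement about the functor $t_{\mathcal{D}}$ and the equivalence $\Phi$, and then exploit exactness of the action $\triangleleft$. First I would recall that, by construction, $Z(\mathcal{D}; V) = \Psi(t_{\mathcal{D}}, V) = V \triangleleft \overline{\Phi}(t_{\mathcal{D}})$ and $Z(\mathcal{C}; V) = V \triangleleft \overline{\Phi}(\id_{\mathcal{C}})$, and that $\phi_V$ is the morphism obtained by applying $V \triangleleft \overline{\Phi}(-)$ to the counit $\iota \colon t_{\mathcal{D}} \to \id_{\mathcal{C}}$ — this is precisely what Lemma~\ref{lem:integ-coref-subcat-1} records (the coend map $\phi$ there is $\overline{\Phi}(\iota)$ up to the identifications in play). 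So it suffices to show that $\overline{\Phi}(\iota)$ is monic and that $V \triangleleft (-)$ preserves monics.

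Next, $\iota \colon t_{\mathcal{D}} \to \id_{\mathcal{C}}$ is a monomorphism in $\LEX(\mathcal{C})$: for each $X \in \mathcal{C}$, the component $\iota_X \colon t_{\mathcal{D}}(X) \hookrightarrow X$ is the inclusion of the largest subobject of $X$ lying in $\mathcal{D}$, hence monic in $\mathcal{C}$, and a natural transformation of $k$-linear functors is monic exactly when all its components are. Since $\overline{\Phi} = \Phi^{-1}$ is an equivalence of $k$-linear abelian categories (it is the quasi-inverse of the equivalence $\Phi$ of~\eqref{eq:equiv-Phi}), it is exact and in particular sends the monic $\iota$ to a monic $\overline{\Phi}(\iota) \colon \overline{\Phi}(t_{\mathcal{D}}) \to \overline{\Phi}(\id_{\mathcal{C}})$ in $\mathcal{C} \boxtimes \mathcal{C}$. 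Finally, the right action $\triangleleft$ of $\mathcal{C} \boxtimes \mathcal{C}^{\rev}$ on $\mathcal{C}$ is exact in each variable — this is the cited consequence of \cite[Proposition 5.13]{MR1106898}, valid because $k$ is algebraically closed — so $V \triangleleft (-) \colon \mathcal{C} \boxtimes \mathcal{C}^{\rev} \to \mathcal{C}$ is exact, hence preserves monomorphisms. Applying it to $\overline{\Phi}(\iota)$ gives that $\phi_V = V \triangleleft \overline{\Phi}(\iota)$ is monic.

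**The main obstacle.** The genuinely delicate point is the bookkeeping in the first step: one must verify that the morphism $\phi_V$ defined abstractly by the universal property of the coend~\eqref{eq:coend-Z-D-V} really does coincide with $V \triangleleft \overline{\Phi}(\iota)$ under the chain of identifications~\eqref{eq:coend-Yoneda}, \eqref{eq:coend-SS}, and the definition $\Psi(F, V) = V \triangleleft \overline{\Phi}(F)$. This is exactly the content packaged into Lemma~\ref{lem:integ-coref-subcat-1}, whose commuting square identifies the coend map $\phi$ with the image of $\iota$; once that square is invoked, the argument is just "equivalence preserves monics" followed by "an exact functor preserves monics," both routine. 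So in the write-up I would state the reduction, cite Lemma~\ref{lem:integ-coref-subcat-1} to handle the identification, note that $\iota$ is componentwise monic hence monic in $\LEX(\mathcal{C})$, and conclude by the exactness of $\overline{\Phi}$ and of $V \triangleleft (-)$.
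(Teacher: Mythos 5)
Your proposal is correct and follows essentially the same route as the paper: the paper's proof likewise identifies $\phi_V$ with $\Psi(\iota,\id_V)$ (via Lemma~\ref{lem:integ-coref-subcat-1} and the construction of $Z(\mathcal{D};V)$), notes that $\iota$ is monic in $\LEX(\mathcal{C})$, and concludes from the exactness of $\Psi$ in each variable. Your version merely unpacks that exactness into the two constituent facts --- $\overline{\Phi}$ is an exact equivalence and $V\triangleleft(-)$ is exact --- which is a harmless elaboration.
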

\begin{proof}
  By the above construction of $Z(\mathcal{D}; V)$, we have $\phi_V = \Psi(\iota, \id_V)$, where $\iota$ is the counit of $i_{\mathcal{D}} \dashv t_{\mathcal{D}}$.
  We note that $\iota$ is the inclusion $t_{\mathcal{D}} \hookrightarrow \id_{\mathcal{C}}$ and, in particular, is monic in $\LEX(\mathcal{C})$.
  Since $\Psi$ is exact in each variable, $\phi_V$ is monic.
\end{proof}

We package the above results toward the proof of Theorem~\ref{thm:irr-ch-indep}.

\begin{lemma}
  \label{lem:integ-coref-subcat-3}
  Let $\mathcal{D}$ be a full subcategory of $\mathcal{C}$ closed under subobjects, quotient objects and direct sums.
  Then the morphism
  \begin{equation*}
    q: \int_{X \in \mathcal{C}} X \otimes X^* \to \int_{X \in \mathcal{D}} X \otimes X^*
  \end{equation*}
  obtained by the universal property of the right end is an epimorphism.
\end{lemma}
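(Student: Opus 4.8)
The plan is to deduce the statement from Lemma~\ref{lem:integ-coref-subcat-2} by transporting it along the right duality functor ${}^*(-)$. Since $\mathcal{C}$ is rigid, ${}^*(-)$ is an equivalence; forgetting monoidal structure, it is an equivalence of the form $\mathcal{C}^{\op} \xrightarrow{\ \sim\ } \mathcal{C}$, and in particular it is exact and interchanges monomorphisms with epimorphisms. The key formal point is that a contravariant equivalence turns ends into coends. Writing the right end $A = \bar{Z}(\unitobj) = \int_{X \in \mathcal{C}} X \otimes X^*$ (see~\eqref{eq:Hopf-comonad-Z}) as the end of the functor $(X, Y) \mapsto Y \otimes X^*$ on $\mathcal{C}^{\op} \times \mathcal{C}$, and using that ${}^*(-)$ is strict monoidal, so that ${}^*(X \otimes X^*) = {}^*(X^*) \otimes {}^*X = X \otimes {}^*X$, one sees that ${}^*(-)$ carries this end to the left coend $\int^{X \in \mathcal{C}} X \otimes {}^*X$, which by~\eqref{eq:coend-Z-D-V} (with $\mathcal{D} = \mathcal{C}$ and $V = \unitobj$) is exactly $Z(\mathcal{C}; \unitobj)$; likewise, ${}^*(-)$ carries $\int_{X \in \mathcal{D}} X \otimes X^*$ to $\int^{X \in \mathcal{D}} X \otimes {}^*X = Z(\mathcal{D}; \unitobj)$.

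Next I would verify that, under these identifications, the morphism $q$ corresponds to the canonical morphism $\phi_{\unitobj} \colon Z(\mathcal{D}; \unitobj) \to Z(\mathcal{C}; \unitobj)$ of Lemma~\ref{lem:integ-coref-subcat-2} (with its direction reversed, since ${}^*(-)$ is contravariant). Both morphisms are characterized by compatibility with universal (di)natural transformations: $q$ is the unique morphism with $p^{\mathcal{D}}_X \circ q = p^{\mathcal{C}}_X$ for $X \in \mathcal{D}$, where $p^{\mathcal{C}}$ and $p^{\mathcal{D}}$ are the universal cones of the two right ends (so $p^{\mathcal{C}}_X = \pi_{\unitobj; X}$), while $\phi_{\unitobj}$ is the unique morphism satisfying the dual equations for the universal cocones of $Z(\mathcal{D}; \unitobj)$ and $Z(\mathcal{C}; \unitobj)$. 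Applying ${}^*(-)$ to the family $p^{\mathcal{C}}$ (resp. $p^{\mathcal{D}}$) produces precisely the universal cocone of $Z(\mathcal{C}; \unitobj)$ (resp. $Z(\mathcal{D}; \unitobj)$), so the relation $p^{\mathcal{D}}_X \circ q = p^{\mathcal{C}}_X$ becomes, after applying ${}^*(-)$, the defining relation of $\phi_{\unitobj}$; hence ${}^*(q) = \phi_{\unitobj}$ under the identifications above. This is a routine diagram chase with the formulas recalled in \S\ref{subsec:integ-coref}, but it is the step where the bookkeeping of variances must be carried out with care, and I expect it to be the only real obstacle in the argument.

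With these two points in hand the conclusion is immediate. By Lemma~\ref{lem:integ-coref-subcat-2}, $\phi_{\unitobj}$ is a monomorphism. Applying the inverse equivalence $(-)^*$, which is again an exact contravariant equivalence $\mathcal{C}^{\op} \xrightarrow{\ \sim\ } \mathcal{C}$, and hence sends monomorphisms to epimorphisms, we obtain that $q$ (identified with $(-)^*$ applied to $\phi_{\unitobj}$) is an epimorphism, as desired.
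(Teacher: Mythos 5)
Your argument is correct and is essentially the paper's own proof: both identify the right end $\int_{X} X \otimes X^*$ with the dual of the coend $Z(\mathcal{D};\unitobj)$ via the (anti-equivalent, mutually inverse) duality functors, match $q$ with the dual of $\phi_{\unitobj}$ from Lemma~\ref{lem:integ-coref-subcat-2}, and conclude that $q$ is epic because $\phi_{\unitobj}$ is monic. The only cosmetic difference is that you apply ${}^*(-)$ to the end while the paper applies $(-)^*$ to the coend.
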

\begin{proof}
  The universal dinatural transformation $j_{V; X}: X \otimes V \otimes {}^* \! X \to Z(\mathcal{D}; V)$ of the coend \eqref{eq:coend-Z-D-V} induces a family
  \begin{equation*}
    Z(\mathcal{D}; \unitobj)^* \xrightarrow{\quad (j_{\unitobj; X})^* \quad}
    (X \otimes \unitobj \otimes {}^* \! X)^* = X \otimes X^*
  \end{equation*}
  of morphisms in $\mathcal{C}$ that is dinatural in $X \in \mathcal{D}$.
  Since the left duality functor is an anti-equivalence, we have, symbolically,
  \begin{equation*}
    \int_{X \in \mathcal{D}} X \otimes X^* = Z(\mathcal{D}; \unitobj)^*
    \quad \text{and} \quad
    \int_{X \in \mathcal{C}} X \otimes X^* = Z(\mathcal{C}; \unitobj)^*.
  \end{equation*}
  Thus $q = \phi_{\unitobj}^*$ with the notation of Lemma~\ref{lem:integ-coref-subcat-2}.
  Since $\phi_{\unitobj}$ is monic, $q$ is epic.
\end{proof}

\subsection{Proof of Theorem~\ref{thm:irr-ch-indep}}

We now give a proof of Theorem~\ref{thm:irr-ch-indep}. Let $\mathcal{C}$ be a pivotal finite tensor category over $k$, and let $\mathcal{S}$ be the full subcategory of $\mathcal{C}$ consisting of all semisimple objects.
As a categorical analogue of $H^{\mathrm{ss}}$ in Example~\ref{ex:indep-irr-ch-Hopf}, we propose to consider the following object:

\begin{definition}
  $\displaystyle A^{\mathrm{ss}} := \int_{X \in \mathcal{S}} X \otimes X^*$.
\end{definition}

As before, we let $\{ V_0, \dotsc, V_m \}$ be the complete set of representatives of isomorphism classes of simple objects of $\mathcal{C}$.
By \cite[\S5.1.3]{MR1862634}, we may assume that
\begin{equation*}
  A^{\mathrm{ss}} = \bigoplus_{i = 0}^m V_i \otimes V_i^*
\end{equation*}
and the universal dinatural transformation $\pi_X^{\mathrm{ss}}: A^{\mathrm{ss}} \to X \otimes X^*$ ($X \in \mathcal{S}$) is just the projection if $X$ is one of $V_0, \dotsc, V_m$.

\begin{proof}[Proof of Theorem~\ref{thm:irr-ch-indep}]
  By the universal property, there exists a unique morphism $q: A \to A^{\mathrm{ss}}$ such that $\pi_{\unitobj; X} = \pi_X^{\mathrm{ss}} \circ q$ for all $X \in \mathcal{S}$. Thus, by~\eqref{eq:character-pi},
  \begin{equation*}
    \mathsf{ch}(V_i) = \widetilde{\eval}_{V_i} \circ \pi_{\unitobj; V_i} = \widetilde{\eval}_{V_i} \circ \pi_{V_i}^{\mathrm{ss}} \circ q
  \end{equation*}
  for $i = 0, \dotsc, m$. This means that $\mathsf{ch}(V_i)$ is obtained as the image of $\widetilde{\eval}_{V_i}$ under
  \begin{equation*}
    \bigoplus_{i = 0}^m \Hom_{\mathcal{C}}(V_i \otimes V_i^*, \unitobj)
    \cong \Hom_{\mathcal{C}}(A^{\mathrm{ss}}, \unitobj)
    \xrightarrow{\quad \Hom_{\mathcal{C}}(q, \unitobj) \quad}
    \Hom_{\mathcal{C}}(A, \unitobj) = \CF(\mathcal{C}).
  \end{equation*}
  It is easy to see that $\mathcal{D} = \mathcal{S}$ satisfies the assumption of Lemma~\ref{lem:integ-coref-subcat-3}.
  Thus $q$ is epic by that lemma, and hence $\Hom_{\mathcal{C}}(q, \unitobj)$ is injective.
  This implies that the set $\mathrm{Irr}(\mathcal{C})$ is linearly independent.
\end{proof}

\subsection{Further remarks on coends}

The equivalence $\Phi$, introduced in \S\ref{subsec:integ-coref}, seems to be an essential tool to study properties of certain coends.
Before closing this section, we add another application of the equivalence $\Phi$.
Let $\mathcal{C}$ and $\{ V_0, \dotsc, V_m \}$ be as above (but now $\mathcal{C}$ is not necessarily pivotal).
With motivation coming from logarithmic conformal field theory, the character of a certain coend has been studied in \cite{MR3289342,MR3146014}.
In relation to this, we give the following formula of the composition factors of such a coend:

\begin{theorem}
  \label{thm:comp-factor-coends}
  For all $k$-linear left exact functor $F: \mathcal{C} \to \mathcal{C}$, we have
  \begin{equation}
    \label{eq:compo-factor-coend}
    \left[ \int^{X \in \mathcal{C}} {}^* \! X \boxtimes F(X) \right]
    = \sum_{i, j = 0}^m [F(P_i^*):V_j] \cdot [V_i \boxtimes V_j]
  \end{equation}
  in $\Gr(\mathcal{C} \boxtimes \mathcal{C})$, where $P_i$ is the projective cover of $V_i$ and $[X : V_i]$ is the multiplicity of $V_i$ in the composition series of $X$.
\end{theorem}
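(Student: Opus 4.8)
The plan is to compute the class of the coend by passing through the equivalence $\Phi: \mathcal{C} \boxtimes \mathcal{C} \to \LEX(\mathcal{C})$ of~\eqref{eq:equiv-Phi}, whose quasi-inverse $\overline{\Phi}$ is exactly the coend functor~\eqref{eq:coend-Phi-inv} appearing on the left-hand side of~\eqref{eq:compo-factor-coend}. Since $\Phi$ is an equivalence of $k$-linear abelian categories, it induces an isomorphism $\Gr(\mathcal{C} \boxtimes \mathcal{C}) \cong \Gr(\LEX(\mathcal{C}))$ of Grothendieck groups, and computing $\bigl[\overline{\Phi}(F)\bigr]$ is the same as computing $[F]$ in $\Gr(\LEX(\mathcal{C}))$ after identifying the two via $\Phi$. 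So the statement reduces to: expressing the class of an arbitrary left exact endofunctor $F$ in terms of the basis of $\Gr(\LEX(\mathcal{C}))$ corresponding, under $\Phi$, to the simple objects $V_i \boxtimes V_j$ of $\mathcal{C} \boxtimes \mathcal{C}$.

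First I would recall that the simple objects of $\mathcal{C} \boxtimes \mathcal{C}$ are precisely the $V_i \boxtimes V_j$, and that under $\Phi$ the object $V_i \boxtimes V_j$ corresponds to the functor $\Hom_{\mathcal{C}}(V_i^*, -) \cdot V_j$. Next, the key observation is that for a left exact functor $F$, the coefficient of $[V_i \boxtimes V_j]$ in $[F]$ can be read off by a suitable Hom- or evaluation pairing: left exact functors on a finite abelian category are determined by their values on injectives (equivalently on the $P_i^*$, which are the injective hulls of the $V_i^*$), and the multiplicity of the ``elementary'' functor $\Hom_{\mathcal{C}}(V_i^*,-)\cdot V_j$ as a composition factor of $F$ should equal $[F(P_i^*) : V_j]$. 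I would make this precise by using that $\Hom_{\mathcal{C}}(V_i^*, -)\cdot V_j$ evaluated at $P_{i'}^*$ is $\Hom_{\mathcal{C}}(V_i^*, P_{i'}^*)\cdot V_j$, which is $V_j$ if $i = i'$ and $0$ otherwise (since $P_{i'}^*$ has simple socle $V_{i'}^*$, so $\Hom_{\mathcal{C}}(V_i^*, P_{i'}^*) = \Kdelta_{i,i'} k$); hence evaluation at $P_i^*$ picks out exactly the ``$i$-th block'' and the multiplicity count of $V_j$ there. Equivalently, one argues on the $\mathcal{C}\boxtimes\mathcal{C}$ side: writing $[\overline{\Phi}(F)] = \sum_{i,j} n_{ij} [V_i \boxtimes V_j]$, apply the exact functor $(-) \triangleleft (P_i \boxtimes \unitobj)$-type evaluation, or more cleanly use that $\overline{\Phi}(F) \triangleleft (X \boxtimes \unitobj)$ recovers $F$-like data, to isolate $n_{ij} = [F(P_i^*):V_j]$.

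The main obstacle I expect is the bookkeeping needed to justify that the pairing ``$F \mapsto [F(P_i^*):V_j]$'' is additive on short exact sequences in $\LEX(\mathcal{C})$ and genuinely computes the composition multiplicities — i.e.\ that a short exact sequence of left exact functors gives, after evaluation at the injective object $P_i^*$, a short exact sequence in $\mathcal{C}$. This is where left exactness and the injectivity of $P_i^*$ are essential: evaluation at an injective is exact on the subcategory of left exact functors, so the assignment descends to a well-defined group homomorphism $\Gr(\LEX(\mathcal{C})) \to \mathbb{Z}$ for each pair $(i,j)$, and checking its value on the basis elements $\Phi(V_i \boxtimes V_j)$ gives $\Kdelta$ as computed above, which pins down the coefficients. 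Once this is in place, substituting back through $\overline{\Phi} = \Phi^{-1}$ and the description of $\overline{\Phi}$ as the coend in~\eqref{eq:coend-Phi-inv} yields~\eqref{eq:compo-factor-coend} directly. A minor point to handle carefully is the appearance of $P_i^*$ rather than $P_i$: this is forced by the ${}^*\!X$ (versus $X$) in the coend~\eqref{eq:coend-Phi-inv} and by the fact that $\Phi(V_i \boxtimes -)$ involves $\Hom_{\mathcal{C}}(V_i^*, -)$, so the duality is consistently tracked throughout.
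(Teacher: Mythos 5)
Your plan is correct and arrives at the formula by a mechanism that is dual to, but genuinely different from, the paper's. The paper extracts the coefficient of $[V_i \boxtimes V_j]$ by pairing $[\overline{\Phi}(F)]$ with $\dim_k \Hom_{\mathcal{C}\boxtimes\mathcal{C}}(P_i \boxtimes P_j, -)$, which is automatically additive because $P_i \boxtimes P_j$ is projective, and then computes
$\Hom_{\mathcal{C}\boxtimes\mathcal{C}}(P_i \boxtimes P_j, \overline{\Phi}(F)) \cong \Nat(\Phi(P_i\boxtimes P_j), F) \cong \Hom_{\mathcal{C}}(P_j, F(P_i^*))$
by writing $\Nat$ as an end, using the copower adjunction \eqref{eq:copower} and the Yoneda lemma; the theorem then follows from $[X:V_j] = \dim_k \Hom_{\mathcal{C}}(P_j, X)$. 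You instead evaluate at the injective object $P_i^*$ and count the multiplicity of $V_j$, which requires the additivity of $G \mapsto [G(P_i^*):V_j]$ on short exact sequences in $\LEX(\mathcal{C})$ — the point you rightly flag as the main obstacle. To close it cleanly, observe that $\mathrm{ev}_{P_i^*} \circ \Phi: \mathcal{C}\boxtimes\mathcal{C} \to \mathcal{C}$ is induced by the bifunctor $(X, Y) \mapsto \Hom_{\mathcal{C}}(X^*, P_i^*)\cdot Y$, which is exact in each variable since $(-)^*$ is exact and $P_i^*$ is injective (it is the image of the projective $P_i$ under the duality anti-equivalence, hence the injective hull of $V_i^*$); exactness of the induced functor on the Deligne product then follows from the same result of Deligne that the paper invokes for the exactness of $\triangleleft$. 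Your evaluation $\Hom_{\mathcal{C}}(V_i^*, P_{i'}^*) = \Kdelta_{i,i'}\, k$ via the simple socle is correct and pins down the coefficients as $[F(P_i^*):V_j]$. The two routes are of comparable length: the paper's avoids the exactness lemma at the cost of the end/Yoneda manipulation, while yours makes more transparent why it is $P_i^*$ rather than $P_i$ that appears in the answer.
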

\begin{proof}
  Let $\mathcal{A}$ be a finite abelian category over $k$. We first recall that if $P$ is the projective cover of a simple object $V \in \mathcal{A}$, then there holds
  \begin{equation}
    \label{eq:compo-factor-pf-1}
    [X:V] = \dim_k \Hom_{\mathcal{A}}(P, X)
  \end{equation}
  for all $X \in \mathcal{A}$.

  Let $\Phi: \mathcal{C} \boxtimes \mathcal{C} \to \LEX(\mathcal{C})$ be the equivalence given by~\eqref{eq:equiv-Phi}, and let $\overline{\Phi}$ be the quasi-inverse of $\Phi$ given by~\eqref{eq:coend-Phi-inv}. The left-hand side of~\eqref{eq:compo-factor-coend} is $[\overline{\Phi}(F)]$. Since every simple object of $\mathcal{C} \boxtimes \mathcal{C}$ is of the form $V_i \boxtimes V_j$, and since $P_i \boxtimes P_j$ is the projective cover of $V_i \boxtimes V_j$, we have
  \begin{equation}
    \label{eq:compo-factor-pf-2}
    [\overline{\Phi}(F)] = \sum_{i, j = 0}^m \dim_k(\Hom_{\mathcal{C} \boxtimes \mathcal{C}}(P_i \boxtimes P_j, \overline{\Phi}(F))) \cdot [V_i \boxtimes V_j]
  \end{equation}
  for all $F \in \LEX(\mathcal{C})$ in $\Gr(\mathcal{C} \boxtimes \mathcal{C})$ by~\eqref{eq:compo-factor-pf-1}. For all $i, j = 0, \dotsc, m$, we compute
  \begin{align*}
    \Hom_{\mathcal{C}}(P_i \boxtimes P_j, \overline{\Phi}(F))
    & \cong \Nat(\Phi(P_i \boxtimes P_j), F) \\
    & \cong \textstyle \int_{X \in \mathcal{C}}
    \Hom_{\mathcal{C}}(\Hom_{\mathcal{C}}(P_i^*, X) \cdot P_j, F(X)) \\
    & \cong \textstyle \int_{X \in \mathcal{C}}
    \Hom_{k}(\Hom_{\mathcal{C}}(P_i^*, X), \Hom_{\mathcal{C}}(P_j, F(X))) \\
    & \cong \Nat(\Hom_{\mathcal{C}}(P_i^*, -), \Hom_{\mathcal{C}}(P_j, F(-))) \\
    & \cong \Hom_{\mathcal{C}}(P_j, F(P_i^*))
    \quad (\text{by the Yoneda lemma}).
  \end{align*}
  The claim now follows from \eqref{eq:compo-factor-pf-1} and \eqref{eq:compo-factor-pf-2}.
\end{proof}

\section{Integrals and Fourier transform}
\label{sec:integral}

\subsection{Unimodular finite tensor categories}
\label{subsec:unimoftc}

Throughout this section, we work over an algebraically closed field $k$. Etingof, Nikshych and Ostrik \cite{MR2097289} introduced the {\em distinguished invertible object} $D \in \mathcal{C}$ of a finite tensor category $\mathcal{C}$ over $k$. The object $D$ is a category-theoretical analogue of the modular function (also called the distinguished grouplike element) of a finite-dimensional Hopf algebra, and therefore we say that $\mathcal{C}$ is {\em unimodular} if $D \cong \unitobj$.

The aim of this section is to introduce an {\em integral}, a {\em cointegral} and the {\em Fourier transform} for a unimodular finite tensor category.
We do not recall the definition of the distinguished invertible object. Instead, we will use the following characterization of the unimodularity given in \cite{2014arXiv1402.3482S}.

\begin{theorem}
  \label{thm:unimo}
  Let $\mathcal{C}$ be a finite tensor category over $k$, and use the set of notations in \S\ref{subsec:FTC}. Then the following assertions are equivalent:
  \begin{enumerate}
  \item $\mathcal{C}$ is unimodular.
  \item $U$ is a Frobenius functor, i.e., $R$ is also left adjoint to $U$.
  \item $\Hom_{\mathcal{Z}(\mathcal{C})}(R(\unitobj), \unitobj) \ne 0$.
  \end{enumerate}
\end{theorem}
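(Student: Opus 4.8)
The plan is to deduce all of (1), (2), (3) from the single statement $D \cong \unitobj$ --- which is the \emph{definition} of unimodularity --- using as the one non-formal ingredient a natural isomorphism
\begin{equation*}
  R(V) \cong L(V \otimes D) \qquad (V \in \mathcal{C})
\end{equation*}
expressing the right adjoint $R$ of $U$ as the left adjoint $L$ twisted by $-\otimes D$ (the exact placement of $D$, and whether $D$ or one of its duals occurs, is immaterial below because $D$ is invertible). Granting this isomorphism, conditions (2) and (3), which a priori concern the adjoint triple $L \dashv U \dashv R$, become formal consequences.

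For (1) $\Leftrightarrow$ (2): since $L \dashv U$ always holds and left adjoints are unique up to isomorphism, ``$R$ is left adjoint to $U$'' is equivalent to $R \cong L$, hence by the displayed isomorphism to $L(-\otimes D) \cong L(-)$. Two functors are isomorphic iff their right adjoints are, and the right adjoint of $L(-\otimes D)$ is $W \mapsto U(W) \otimes {}^* \! D$ while that of $L$ is $U$; comparing these and evaluating at the unit object of $\mathcal{Z}(\mathcal{C})$ yields ${}^* \! D \cong \unitobj$, i.e.\ $D \cong \unitobj$, and the converse is immediate. For (1) $\Leftrightarrow$ (3): from $R(\unitobj) \cong L(D)$ and $L \dashv U$ one gets
\begin{equation*}
  \Hom_{\mathcal{Z}(\mathcal{C})}(R(\unitobj), \unitobj)
  \cong \Hom_{\mathcal{Z}(\mathcal{C})}(L(D), \unitobj)
  \cong \Hom_{\mathcal{C}}(D, U(\unitobj))
  = \Hom_{\mathcal{C}}(D, \unitobj);
\end{equation*}
an invertible object is simple and $\unitobj$ is simple, so this space is nonzero precisely when $D \cong \unitobj$. (One can also see (2) $\Rightarrow$ (3) with no input at all: if $R \dashv U$ then $\Hom_{\mathcal{Z}(\mathcal{C})}(R(\unitobj), \unitobj) \cong \Hom_{\mathcal{C}}(\unitobj, U(\unitobj)) = \End_{\mathcal{C}}(\unitobj) \cong k \neq 0$.)

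The whole weight of the theorem thus rests on the isomorphism $R(V) \cong L(V \otimes D)$, which I would import from \cite{2014arXiv1402.3482S} or re-derive, and this is where I expect the genuine difficulty. Its proof combines the relation, for a finite tensor category, between the Nakayama (Serre-type) functor and the distinguished invertible object --- a double-dual functor twisted by $D$, in the spirit of \cite{MR2097289} --- with the standard principle that the right adjoint of an exact functor between finite abelian categories is the left adjoint conjugated by the source and target Nakayama functors, applied to $U$; and, crucially, with the fact that \textbf{the Drinfeld center $\mathcal{Z}(\mathcal{C})$ is unimodular}, which makes its Nakayama functor reduce to the bare double dual, so that after using the strong monoidality of $U$ the two double-dual twists cancel and only the $D$-twist survives. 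I regard the unimodularity of $\mathcal{Z}(\mathcal{C})$ as the main obstacle; I would prove it by the coend techniques of Section~\ref{sec:indep}, working inside $\mathcal{Z}(\mathcal{C})$ with the object $R(\unitobj)$ --- whose underlying object is $A = \int_{X} X \otimes X^*$ with its canonical half-braiding --- and comparing it with $L(\unitobj)$, in parallel with the classical fact that the Drinfeld double of a finite-dimensional Hopf algebra carries a two-sided integral and is therefore unimodular.
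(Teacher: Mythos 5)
The paper offers no proof of this theorem to compare against: it is imported verbatim from \cite{2014arXiv1402.3482S}, precisely so that the definition of the distinguished invertible object $D$ need not be recalled here. Your argument is nonetheless sound in structure and, in fact, follows the same strategy as the cited reference. Granting the natural isomorphism $R(V) \cong L(V \otimes D)$ (up to replacing $D$ by a dual or moving it to the other side, which is harmless since $D$ is invertible), your deductions of $(1)\Leftrightarrow(2)$ and $(1)\Leftrightarrow(3)$ are correct: uniqueness of adjoints, passage to right adjoints (the right adjoint of $L(-\otimes D)$ is $U(-)\otimes D^{*}$ rather than $U(-)\otimes{}^{*}D$, but as you say this is immaterial), evaluation at the unit object, and Schur's lemma applied to the simple objects $D$ and $\unitobj$ all go through, as does the unconditional implication $(2)\Rightarrow(3)$. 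The whole weight does rest on $R(V)\cong L(V\otimes D)$, which is the main technical theorem of \cite{2014arXiv1402.3482S}; there it is obtained by applying the categorical Radford $S^{4}$ formula of \cite{MR2097289} directly to the explicit (co)end presentations $L(V)=\int^{X}X^{*}\otimes V\otimes X$ and $R(V)=\int_{X}X\otimes V\otimes X^{*}$, rather than by the Nakayama-functor conjugation you sketch. Your alternative route is viable, but the obstacle you single out---the unimodularity of $\mathcal{Z}(\mathcal{C})$---does not need to be re-derived by coend techniques: it is already available in the literature (it is one of the applications of the $S^{4}$ formula in \cite{MR2097289}). In short: your proof is correct modulo the imported isomorphism, and is no less complete than the paper itself, which imports the entire statement.
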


We also note that $\mathbf{A} := R(\unitobj)$ is an algebra in $\mathcal{Z}(\mathcal{C})$ as the image of $\unitobj \in \mathcal{C}$ under a monoidal functor.
Since the adjoint algebra is given by $A = U(\mathbf{A})$, the structure morphisms of $\mathbf{A}$ are expressed by the same symbol as those of $A$.
It is known that the algebra $\mathbf{A}$ is commutative in the sense that
\begin{equation}
  \label{eq:adj-alg-comm}
  m \circ \sigma_{\mathbf{A}, \mathbf{A}} = m,
\end{equation}
where $\sigma$ is the braiding of $\mathcal{Z}(\mathcal{C})$; see, {\it e.g.}, \cite{2014arXiv1402.3482S}.

Now suppose that $\mathcal{C}$ is unimodular. Then we have
\begin{equation}
  \label{eq:Fb-tr-uniq}
  \Hom_{\mathcal{Z}(\mathcal{C})}(\mathbf{A}, \unitobj_{\mathcal{Z}(\mathcal{C})})
  \cong \Hom_{\mathcal{C}}(\unitobj, U(\unitobj_{\mathcal{Z}(\mathcal{C})}))
  = \Hom_{\mathcal{C}}(\unitobj, \unitobj)
  \cong k
\end{equation}
by Theorem~\ref{thm:unimo}.
Thus there is a non-zero morphism $\lambda: \mathbf{A} \to \unitobj$ in $\mathcal{Z}(\mathcal{C})$ and such a morphism is unique up to scalar multiple.
It is also shown in \cite{2014arXiv1402.3482S} that $\mathbf{A}$ is a Frobenius algebra in $\mathcal{Z}(\mathcal{C})$ with trace $\lambda$ in the sense that
\begin{equation}
  \label{eq:Fb-iso}
  \mathbf{A} \xrightarrow{\quad \id_\mathbf{A} \otimes \coev_\mathbf{A} \quad}
  \mathbf{A} \otimes \mathbf{A} \otimes \mathbf{A}^*
  \xrightarrow{\quad m \otimes \id_{\mathbf{A}^*} \quad}
  \mathbf{A} \otimes \mathbf{A}^*
  \xrightarrow{\quad \lambda \otimes \id_{\mathbf{A}^*} \quad}
  \mathbf{A}^*
\end{equation}
is an isomorphism in $\mathcal{Z}(\mathcal{C})$.

For later use, we note some consequences of the Frobenius property of $\mathbf{A}$ (see, {\it e.g.}, \cite{MR2500035} for the general theory of Frobenius algebras in a rigid monoidal category).
Fix a non-zero morphism $\lambda: \mathbf{A} \to \unitobj$ in $\mathcal{Z}(\mathcal{C})$ and set
\begin{equation}
  \label{eq:Fb-dual-basis}
  e_{\lambda} = \lambda \circ m
  \quad \text{and} \quad
  d_{\lambda} = (\id_{\mathbf{A}} \otimes \phi_{\lambda}^{-1}) \circ \coev_{\mathbf{A}},
\end{equation}
where $\phi_{\lambda}: \mathbf{A} \to \mathbf{A}^*$ is the isomorphism given by~\eqref{eq:Fb-iso}.
Then the triple $(\mathbf{A}, e_{\lambda}, d_{\lambda})$ is a left dual object of $\mathbf{A}$.
The $\mathbf{A}$-linearity of $\phi_{\lambda}$ implies
\begin{equation}
  \label{eq:Fb-comul}
  (\id_{\mathbf{A}} \otimes m) \circ (d_{\lambda} \otimes \id_{\mathbf{A}})
  = (m \otimes \id_{\mathbf{A}}) \circ (\id_{\mathbf{A}} \otimes d_{\lambda}).
\end{equation}
Let $\Delta: \mathbf{A} \to \mathbf{A} \otimes \mathbf{A}$ be the left-hand of \eqref{eq:Fb-comul}. Then $(\mathbf{A}, \Delta, \lambda)$ is a coalgebra in $\mathcal{Z}(\mathcal{C})$. Namely, the following equations hold:
\begin{gather*}
  (\Delta \otimes \id_{\mathbf{A}}) \circ \Delta
  = (\id_{\mathbf{A}} \otimes \Delta) \circ \Delta, \quad
  (\lambda \otimes \id_{\mathbf{A}}) \circ \Delta
  = \id_{\mathbf{A}} =
  (\id_{\mathbf{A}} \otimes \lambda) \circ \Delta.
\end{gather*}

\subsection{Characterization of trivial objects}

Let $\mathcal{C}$ be a finite tensor category over $k$ (the unimodularity and the pivotality are not needed for a while).
We say that an object $X \in \mathcal{C}$ is {\em trivial} if it is isomorphic to the direct sum of finitely many (possibly zero) copies of the unit object.
Before we introduce the notions of integrals and cointegrals, we provide the following characterization of trivial objects:

\begin{proposition}
  \label{prop:triv-obj}
  An object $X \in \mathcal{C}$ is trivial if and only if
  \begin{equation}
    \label{eq:triv-obj-rho}
    \rho_X = \varepsilon_{\unitobj} \otimes \id_X.
  \end{equation}
\end{proposition}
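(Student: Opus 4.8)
The statement to be proved is a characterization: $X \in \mathcal{C}$ is trivial (i.e.\ a finite direct sum of copies of $\unitobj$) if and only if the canonical action $\rho_X \colon A \otimes X \to X$ factors through the counit $\varepsilon_{\unitobj} \colon A \to \unitobj$ in the simplest possible way, namely $\rho_X = \varepsilon_{\unitobj} \otimes \id_X$. One direction is essentially formal: if $X = \unitobj^{\oplus n}$, then by naturality of $\rho$ in $X$ and the fact that $\rho$ respects direct sums, it suffices to check $\rho_{\unitobj} = \varepsilon_{\unitobj}$; but this is exactly \eqref{eq:character-unit-obj} together with the description \eqref{eq:can-act-by-pi} of $\rho_{\unitobj}$ via $\pi_{\unitobj;\unitobj}$, or more directly the identity $\rho_{\unitobj} = (\id_{\unitobj} \otimes \varepsilon_{\unitobj}) \circ \sigma_{\unitobj}$ from \eqref{eq:can-act-by-half-br} together with the fact that $\sigma_{\unitobj}$ is the identity. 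So the content lies in the converse.

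For the converse, the plan is to reinterpret condition \eqref{eq:triv-obj-rho} through the adjunction $U \dashv R$. Recall that an object $X$ equipped with the canonical $A$-action is the image $U(\text{something})$ precisely when it lifts to $\mathcal{Z}(\mathcal{C})$; more usefully, by \eqref{eq:can-act-by-half-br} the canonical action is determined by the half-braiding data coming from $A = \bar Z(\unitobj)$, and the condition $\rho_X = \varepsilon_{\unitobj} \otimes \id_X$ should be equivalent to saying that the associated half-braiding on $X$ (pulled back along $\delta_{\unitobj}$ or along the universal property \eqref{eq:Hopf-comonad-Z-represent}) is the trivial one, $\sigma_Y^X = \id_{Y \otimes X}$ reorganized. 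Concretely, under the bijection $\Hom_{\mathcal{C}}(A \otimes X, X) \cong \Nat(\,?\,)$ coming from \eqref{eq:Hopf-comonad-Z-represent} with $V = A \otimes X$ — or better, one uses that $\rho$ corresponds to $\id_A$ and asks what $\rho_X = \varepsilon_{\unitobj} \otimes \id_X$ forces. The cleanest route: consider the morphism $X \to \bar Z(X)$, or rather use that $\bar Z(X) = \int_{Y} Y \otimes X \otimes Y^*$ and that $X$ being trivial is equivalent to the canonical morphism (the "coaction") $X \to \bar Z(X)$ being split by something, equivalently to $X$ lying in the image of the forgetful functor from $\mathcal{Z}(\mathcal{C})$ equipped with the trivial half-braiding. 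I would phrase it as: \eqref{eq:triv-obj-rho} holds iff the pair $(X, \text{trivial half-braiding})$ — i.e.\ the object $\unitobj^{\oplus \dim?}$... no — iff $X$ together with $\sigma_Y = \id$ (using that $\unitobj \otimes X = X = X \otimes \unitobj$ and transporting) is a well-defined object of $\mathcal{Z}(\mathcal{C})$, and such objects are exactly those $X$ for which the identity natural transformation $X \otimes (-) \Rightarrow (-) \otimes X$ satisfies the hexagon — which, since $\mathcal{C}$'s unit constraints are the only canonical isos $X \otimes \unitobj \cong \unitobj \otimes X$, forces... Actually the right statement: $(X,\sigma)$ with $\sigma$ arbitrary lies over $X$; the \emph{specific} $\sigma$ dictated by $\rho_X = \varepsilon \otimes \id$ is $\sigma_Y^X \colon X \otimes Y \to Y \otimes X$; I claim \eqref{eq:triv-obj-rho} is equivalent to this $\sigma$ being a natural isomorphism $X \otimes (-) \cong (-) \otimes X$ that is "central and trivial", and then one invokes that the only objects admitting a half-braiding whose component at a simple object $V$ is compatible with $\End(\id)$ acting trivially are the trivial ones. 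The decisive input will be a Schur-type / Hom-space computation: using \eqref{eq:Hopf-comonad-Z-represent}, $\Hom_{\mathcal{C}}(X, \bar Z(\unitobj) \otimes ?) $...

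More economically, here is the argument I would actually write. By \eqref{eq:can-act-by-half-br}, $\rho_X = (\id_X \otimes \varepsilon_{\unitobj}) \circ \sigma_X$, where $\sigma_X \colon A \otimes X \to X \otimes A$ comes from the central structure of $A \in \mathcal{Z}(\mathcal{C})$. Precompose with $u \otimes \id_X \colon X \to A \otimes X$ (abusing $\unitobj \otimes X = X$): since $u$ is the unit of $A$ and $\sigma$ is a half-braiding, $\sigma_X \circ (u \otimes \id_X) = \id_X \otimes u$, so $\rho_X \circ (u \otimes \id_X) = (\id_X \otimes \varepsilon_{\unitobj}) \circ (\id_X \otimes u) = \id_X$ always — consistent but uninformative. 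So instead I test \eqref{eq:triv-obj-rho} against a splitting: the key observation is that \eqref{eq:triv-obj-rho} says $X$, as a module over the algebra $A$ via $\rho_X$, is the restriction along $u \colon \unitobj \to A$ of the trivial action, equivalently $X$ is "induced up" trivially; dually via the comonad $\bar Z$, it says the coaction $X \to \bar Z(X)$ equals $\bar Z_0$-twist of $\id$. Then: an object $X$ for which the $\bar Z$-coaction is "trivial" in this sense is a $\bar Z$-comodule lying in the image of the trivial embedding $\mathcal{C} \to {}_Z\mathcal{C} \simeq \mathcal{Z}(\mathcal{C})$... but there is no monoidal section $\mathcal{C} \to \mathcal{Z}(\mathcal{C})$ in general. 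The resolution, and the \textbf{main obstacle}, is to show that condition \eqref{eq:triv-obj-rho} forces the half-braiding $\sigma_X^Y \colon X \otimes Y \to Y \otimes X$ to coincide with the structure of an object $(X,\beta)\in\mathcal Z(\mathcal C)$, and then to compute $\Hom_{\mathcal{Z}(\mathcal{C})}((X,\beta), R(\unitobj)) = \Hom_{\mathcal{C}}(X,\unitobj)$ versus $\Hom_{\mathcal{Z}(\mathcal{C})}(L(X), \text{?})$; combining with the rigidity/duality and the fact that $\unitobj$ is the unique simple object of $\mathcal{C}$ admitting such a lift in a connected component — i.e.\ invoking that $\mathrm{soc}$ and $\FPdim$ arguments pin down trivial objects — yields $X \cong \unitobj^{\oplus n}$. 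I expect that the honest proof the authors give is shorter: probably one shows \eqref{eq:triv-obj-rho} implies $\mathsf{ch}(X) = \dim(X)\cdot \varepsilon_{\unitobj}$ is "of rank one" and then uses the already-proven linear independence of irreducible characters (Theorem~\ref{thm:irr-ch-indep}) to conclude $X$ has only $\unitobj$ as composition factor, plus a semisimplicity-of-the-$\unitobj$-block or a $\mathrm{Ext}$ vanishing to get the splitting. So my plan would be: (i) dispatch the easy direction via \eqref{eq:character-unit-obj}; (ii) assuming \eqref{eq:triv-obj-rho}, apply $\mathrm{Hom}_{\mathcal{C}}(-,X)$ and the universal property to see the $\bar Z$-coaction is trivial, hence $X$ together with the trivial half-braiding is an object of $\mathcal{Z}(\mathcal{C})$; (iii) use that in $\mathcal{Z}(\mathcal{C})$ the unit $\unitobj$ with trivial braiding has endomorphism ring $k$ and that any $(X,\text{triv})$ decomposes accordingly, forcing each composition factor of $X$ to be $\unitobj$; (iv) finally upgrade "all composition factors trivial $+$ trivial half-braiding" to an actual direct sum decomposition by the rigidity of the situation (the trivial half-braiding constraint kills nonsplit extensions). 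Step (iv) — turning the filtration into a splitting — is the part I expect to require the most care.
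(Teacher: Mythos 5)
Your easy direction is fine, but the converse -- which is the entire content of the proposition -- is never actually proved; what you offer is a sequence of candidate strategies, each of which breaks down at the same point. The central confusion is the repeated claim that \eqref{eq:triv-obj-rho} endows $X$ with a ``trivial half-braiding'' and hence realizes $X$ as an object of $\mathcal{Z}(\mathcal{C})$. The hypothesis is a condition on the action of $A$ on $X$ (equivalently, on the component $\pi_{\unitobj;X}:A\to X\otimes X^*$ of the universal dinatural transformation, which it forces to factor as $\coev_X\circ\varepsilon_{\unitobj}$); it does not produce any natural isomorphism $X\otimes(-)\to(-)\otimes X$, and indeed ``the trivial half-braiding on $X$'' is not a well-defined notion unless $X$ is already trivial. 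Even granting some lift of $X$ to $\mathcal{Z}(\mathcal{C})$, that would say nothing about triviality, since every object of $\mathcal{Z}(\mathcal{C})$ carries a half-braiding. Steps (iii) and (iv) of your plan then rest on this non-existent structure, and (iv) in particular (``the trivial half-braiding constraint kills nonsplit extensions'') is an assertion, not an argument. Your fallback idea -- compute $\mathsf{ch}(X)=\dim(X)\,\mathsf{ch}(\unitobj)$ and invoke Theorem~\ref{thm:irr-ch-indep} -- also cannot work here: the proposition is stated for a finite tensor category without a pivotal structure (the paper explicitly notes pivotality is not needed at this point), and even with one, linear independence only gives $[X:V_i]=0$ \emph{in $k$}, which fails to pin down the integer multiplicities in positive characteristic.

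The paper's actual argument is quite different and short. It uses Lyubashenko's theorem that the functor $\mathcal{C}\boxtimes\mathcal{C}\to{}_A\mathcal{C}$ (left $A$-modules in $\mathcal{C}$) sending $V\boxtimes W$ to $(V\otimes W,\rho_V\otimes\id_W)$ is an equivalence. Under this equivalence $X\boxtimes\unitobj$ goes to $(X,\rho_X)$ and $\unitobj\boxtimes X$ goes to $(X,\varepsilon_{\unitobj}\otimes\id_X)$ (since $\rho_{\unitobj}=\varepsilon_{\unitobj}$), so hypothesis \eqref{eq:triv-obj-rho} gives $\unitobj\boxtimes X\cong X\boxtimes\unitobj$ in $\mathcal{C}\boxtimes\mathcal{C}$; applying the exact functor $V\boxtimes W\mapsto\Hom_{\mathcal{C}}(\unitobj,V)\cdot W$ yields $X\cong\Hom_{\mathcal{C}}(\unitobj,X)\cdot\unitobj$, i.e.\ $X$ is trivial. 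This single input (full faithfulness of the Deligne-tensor-to-$A$-modules functor) is exactly the ``decisive input'' your outline is missing; without it, or some equally strong substitute, the filtration-and-splitting programme you sketch cannot be completed.
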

\begin{proof}
  It is easy to see that an object $X \in \mathcal{C}$ satisfies \eqref{eq:triv-obj-rho} if $X$ is trivial.  To prove the converse, we recall that $F := \int^{X \in \mathcal{C}} X^* \otimes X$ has the coalgebra structure such that $F^* \cong A$ as algebras in $\mathcal{C}$ (see Remark~\ref{rem:coend-Hopf}). By Lyubashenko's result on the category of $F$-comodules \cite[\S2.7]{MR1625495}, we see that the functor
  \begin{equation*}
    \mathcal{C} \boxtimes \mathcal{C} \to \text{(the category of left $A$-modules in $\mathcal{C}$)},
    \quad V \boxtimes W \mapsto (V \otimes W, \rho_V \otimes \id_W)
  \end{equation*}
  is an equivalence of categories. Now suppose that $X \in \mathcal{C}$ satisfies~\eqref{eq:triv-obj-rho}. By the above equivalence, we have $\unitobj \boxtimes X \cong X \boxtimes \unitobj$ in $\mathcal{C} \boxtimes \mathcal{C}$. Applying the functor
  \begin{equation*}
    \mathcal{C} \boxtimes \mathcal{C} \to \mathcal{C},
    \quad V \boxtimes W \mapsto \Hom_{\mathcal{C}}(\unitobj, V) \cdot W
  \end{equation*}
  to both sides, we get $X \cong \Hom_{\mathcal{C}}(\unitobj, X) \cdot \unitobj$. Hence $X$ is trivial. The proof is done.
\end{proof}

\subsection{Integral theory}
\label{subsec:int-theory}

We introduce an integral and a cointegral in a unimodular finite tensor category. From now on, we suppose that $\mathcal{C}$ is unimodular and use the set of notation given in \S\ref{subsec:FTC}.

\begin{definition}
  \label{def:integ}
  An {\em integral} in $\mathcal{C}$ is a morphism $\Lambda: \unitobj \to A$ in $\mathcal{C}$ such that
  \begin{equation}
    \label{eq:integ-def}
    m \circ (\id_A \otimes \Lambda) = \varepsilon_{\unitobj} \otimes \Lambda.
  \end{equation}
  A {\em cointegral} in $\mathcal{C}$ is a morphism $\lambda: A \to \unitobj$ such that
  \begin{equation}
    \label{eq:coint-def}
    \bar{Z}(\lambda) \circ \delta_{\unitobj} = u \circ \lambda.
  \end{equation}
\end{definition}

\begin{remark}
  Definition~\ref{def:integ} makes sense even if $\mathcal{C}$ is not unimodular. The unimodularity will be used to show that non-zero (co)integrals exist. We suspect that a non-zero (co)integral in a finite tensor category $\mathcal{A}$ exists only if $\mathcal{A}$ is unimodular ({\it cf}. the case of Hopf algebras, \S\ref{subsec:Hopf-alg-Fourier}). The above definition should be modified to a form involving the distinguished invertible object for the non-unimodular case.
\end{remark}

If we identify $\mathcal{Z}(\mathcal{C})$ with the category of $\bar{Z}$-comodules, then a right adjoint of $U$ is the free comodule functor $V \mapsto (\bar{Z}(V), \delta_{V})$.
Equation \eqref{eq:coint-def} says that a cointegral is a morphism $R(\unitobj) \to \unitobj$ of $\bar{Z}$-comodules.
Hence, by~\eqref{eq:Fb-tr-uniq}, a non-zero cointegral in $\mathcal{C}$ exists and such a cointegral is unique up to scalar multiple.

It is easy to see that $\varepsilon_{\unitobj}: A \to \unitobj$ is a morphisms of algebras in $\mathcal{C}$. If we regard $\unitobj \in \mathcal{C}$ as a left $A$-module by $\varepsilon_{\unitobj}$, then an integral is precisely a morphism $\unitobj \to A$ of left $A$-modules. Since $A \cong A^*$ as right $A$-modules, we have
\begin{equation}
  \label{eq:Fb-integ-pf}
  \Hom_{A|-}(\unitobj, A)
  \cong \Hom_{-|A}(A^*, \unitobj^*)
  \cong \Hom_{-|A}(A, \unitobj)
  \cong \Hom_{\mathcal{C}}(\unitobj, \unitobj)
  \cong k,
\end{equation}
where $\Hom_{A|-}(X,Y)$ and $\Hom_{-|A}(X,Y)$ are the sets of morphisms of left and right $A$-modules, respectively.
This means that a non-zero integral in $\mathcal{C}$ exists and, moreover, such an integral is unique up to scalar multiple.

In the theory of finite-dimensional Hopf algebras, it is well-known that the pairing between cointegrals and integrals is non-degenerate.
To formulate an analogous fact in our setting, we consider a paring
\begin{equation}
  \langle \, , \, \rangle: \CF(\mathcal{C}) \times \CE(\mathcal{C}) \to k,
  \quad \langle f, a \rangle \, \id_{\unitobj} = f \circ a.
\end{equation}

Now we fix a non-zero cointegral $\lambda$ and define $\phi_{\lambda}$, $e_{\lambda}$ and $d_{\lambda}$ as in \S\ref{subsec:unimoftc}.

\begin{proposition}
  \label{prop:integ-non-degen}
  $\Lambda := (\id_A \otimes \varepsilon_{\unitobj}) \circ d_{\lambda}$ is an integral such that $\langle \lambda, \Lambda \rangle = 1$.
\end{proposition}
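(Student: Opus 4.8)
The plan is to read off both assertions from the Frobenius data on $\mathbf{A}$ recalled in \S\ref{subsec:unimoftc} --- the relation~\eqref{eq:Fb-comul} and the coalgebra structure $(\mathbf{A},\Delta,\lambda)$ with $\Delta=(\id_A\otimes m)\circ(d_\lambda\otimes\id_A)$ --- together with the fact (noted at the start of \S\ref{subsec:int-theory}) that $\varepsilon_{\unitobj}\colon A\to\unitobj$ is a morphism of algebras. All the relations involved hold in $\mathcal{Z}(\mathcal{C})$, but since the forgetful functor $U$ is strong monoidal they descend to $\mathcal{C}$, where the computation is carried out. I shall use the strictness identifications $\unitobj\otimes X=X=X\otimes\unitobj$ without comment; in particular, for $f\colon\unitobj\to A$ and $g\colon A\to\unitobj$ the morphisms $f\otimes g$ and $g\otimes f$, viewed as endomorphisms of $A$, both equal $f\circ g$.

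For the integral property, I would unfold the definition of $\Lambda$ and use bifunctoriality of $\otimes$ to get
\[
  m\circ(\id_A\otimes\Lambda)
  =(\id_A\otimes\varepsilon_{\unitobj})\circ(m\otimes\id_A)\circ(\id_A\otimes d_\lambda).
\]
Relation~\eqref{eq:Fb-comul} replaces $(m\otimes\id_A)\circ(\id_A\otimes d_\lambda)$ by $(\id_A\otimes m)\circ(d_\lambda\otimes\id_A)$, so the right-hand side becomes $(\id_A\otimes(\varepsilon_{\unitobj}\circ m))\circ(d_\lambda\otimes\id_A)$. Since $\varepsilon_{\unitobj}$ is an algebra map, $\varepsilon_{\unitobj}\circ m=\varepsilon_{\unitobj}\otimes\varepsilon_{\unitobj}$, and one further application of bifunctoriality identifies this with $\Lambda\otimes\varepsilon_{\unitobj}$, which equals $\varepsilon_{\unitobj}\otimes\Lambda$ after the identifications above. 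This is precisely~\eqref{eq:integ-def}, so $\Lambda$ is an integral.

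For the normalization I would first note $\Delta\circ u=d_\lambda$, which follows from the definition of $\Delta$ and the unit axiom $m\circ(\id_A\otimes u)=\id_A$. Then
\[
  \lambda\circ\Lambda
  =(\lambda\otimes\varepsilon_{\unitobj})\circ d_\lambda
  =(\lambda\otimes\varepsilon_{\unitobj})\circ\Delta\circ u
  =\varepsilon_{\unitobj}\circ(\lambda\otimes\id_A)\circ\Delta\circ u
  =\varepsilon_{\unitobj}\circ u
  =\id_{\unitobj},
\]
using bifunctoriality for the first and third equalities, the counit axiom $(\lambda\otimes\id_A)\circ\Delta=\id_A$ for the fourth, and that $\varepsilon_{\unitobj}$ is unital for the last. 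Hence $\langle\lambda,\Lambda\rangle=1$; in particular $\Lambda\neq0$, consistently with the uniqueness of integrals up to scalar established above.

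I do not foresee a genuine obstacle: the argument is a short diagram chase. The one point deserving care is the bookkeeping of the unit-object identifications, so that both sides of~\eqref{eq:integ-def} are correctly compared as endomorphisms of $A$ and so that $\lambda\circ\Lambda$ is correctly interpreted as a scalar in $k=\End(\unitobj)$. Conceptually, $\Lambda=(\id_A\otimes\varepsilon_{\unitobj})\circ d_\lambda$ is the internal analogue of the element $\sum_i\varepsilon(e^i)e_i$ formed from a pair of dual bases $\{e_i\}$, $\{e^i\}$ of a Frobenius Hopf algebra with cointegral $\lambda$, and the proposition is the categorical counterpart of the classical fact that this element is a (two-sided, by unimodularity) integral that pairs to $1$ with $\lambda$.
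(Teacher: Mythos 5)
Your proof is correct. The normalization computation is exactly the paper's: both of you use $\Delta\circ u=d_{\lambda}$ and the counit property of $(\mathbf{A},\Delta,\lambda)$ to get $\lambda\circ\Lambda=(\lambda\otimes\varepsilon_{\unitobj})\circ\Delta\circ u=\varepsilon_{\unitobj}\circ u=\id_{\unitobj}$. For the integral property the paper argues slightly differently: it observes that $\Lambda=\theta(\varepsilon_{\unitobj})$, where $\theta\colon\Hom_{-|A}(A,\unitobj)\to\Hom_{A|-}(\unitobj,A)$, $f\mapsto(\id_A\otimes f)\circ d_{\lambda}$, is the composite of the inverses of the first two isomorphisms in \eqref{eq:Fb-integ-pf}, and that $\varepsilon_{\unitobj}$ is a right $A$-module map; the conclusion is then immediate from the identification of integrals with left $A$-module maps $\unitobj\to A$. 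You instead verify \eqref{eq:integ-def} by a direct diagram chase from the Frobenius relation \eqref{eq:Fb-comul} and the multiplicativity of $\varepsilon_{\unitobj}$. The two arguments are equivalent in substance — your computation is essentially the proof that $\theta$ carries right-module maps to left-module maps, specialized to $f=\varepsilon_{\unitobj}$ — so you trade the paper's one-line appeal to an already-established isomorphism for a self-contained calculation. Your handling of the unit-object identifications (identifying $\Lambda\otimes\varepsilon_{\unitobj}$ and $\varepsilon_{\unitobj}\otimes\Lambda$ as the endomorphism $\Lambda\circ\varepsilon_{\unitobj}$ of $A$) is the right way to reconcile the direct computation with the form of \eqref{eq:integ-def}.
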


\begin{proof}
  The isomorphism $\theta: \Hom_{-|A}(A, \unitobj) \to \Hom_{A|-}(\unitobj, A)$ obtained by composing the inverses of the first two isomorphisms in \eqref{eq:Fb-integ-pf} is given by
  \begin{align*}
    \theta: \quad f \mapsto f \circ \phi_{\lambda}^{-1}
    \mapsto (\id_A \otimes (f \circ \phi_{\lambda}^{-1})) \circ \coev_A
    = (\id_A \otimes f) \circ d_{\lambda}
  \end{align*}
  for $f \in \Hom_{-|A}(A, \unitobj)$.
  Since $\varepsilon_{\unitobj} \in \Hom_{-|A}(A, \unitobj)$, $\Lambda = \theta(\varepsilon_{\unitobj})$ is an integral.
  To show $\langle \lambda, \Lambda \rangle = 1$, we recall that $A$ is a coalgebra with the comultiplication $\Delta$ given by~\eqref{eq:Fb-comul} and the counit $\lambda$. Hence,
  \begin{equation*}
    \lambda \circ \Lambda
    = (\lambda \otimes \varepsilon_{\unitobj}) \circ d_{\lambda}
    = (\lambda \otimes \varepsilon_{\unitobj}) \circ \Delta \circ u
    = \varepsilon_{\unitobj} \circ u = \id_{\unitobj}.
    \qedhere
  \end{equation*}
\end{proof}

The equivalence $(1) \Leftrightarrow (4)$ of the following proposition can be thought of as a generalization of the Maschke theorem on the semisimplicity of finite group algebras and finite-dimensional Hopf algebras.

\begin{proposition}
  \label{prop:integ-Maschke}
  Let $\Lambda \in \CE(\mathcal{C})$ be a non-zero integral in the unimodular finite tensor category $\mathcal{C}$. Then the following assertions are equivalent:
  \begin{enumerate}
  \item $\mathcal{C}$ is a semisimple abelian category.
  \item $\End(\id_{\mathcal{C}})$ is a semisimple algebra.
  \item The integral $\Lambda \in \CE(\mathcal{C})$ is not a nilpotent element.
  \item $\langle \varepsilon_{\unitobj}, \Lambda \rangle \ne 0$, where $\varepsilon$ is the counit of the central Hopf comonad on $\mathcal{C}$.
  \end{enumerate}
\end{proposition}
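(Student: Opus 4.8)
The plan is to prove the four conditions equivalent by the cycle $(1)\Rightarrow(2)\Rightarrow(3)\Rightarrow(4)\Rightarrow(1)$, of which only the last implication requires real work. For $(1)\Rightarrow(2)$, if $\mathcal{C}$ is semisimple then, since $k$ is algebraically closed, $\End(\id_{\mathcal{C}})$ is isomorphic to a product of copies of $k$ indexed by the simple objects, hence is semisimple. For $(2)\Rightarrow(3)$, recall from \eqref{eq:CE-and-End-id} that the algebra $\CE(\mathcal{C})$ is isomorphic to $\End(\id_{\mathcal{C}})$ and that $\CE(\mathcal{C})$ is commutative; a finite-dimensional commutative semisimple $k$-algebra is reduced, so the non-zero element $\Lambda$ cannot be nilpotent. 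For $(3)\Rightarrow(4)$ I argue by contraposition: using the integral equation \eqref{eq:integ-def} one computes $\Lambda\cdot\Lambda = m\circ(\id_A\otimes\Lambda)\circ(\Lambda\otimes\id_{\unitobj}) = (\varepsilon_{\unitobj}\otimes\Lambda)\circ(\Lambda\otimes\id_{\unitobj}) = \langle\varepsilon_{\unitobj},\Lambda\rangle\,\Lambda$ in $\CE(\mathcal{C})$, so that $\langle\varepsilon_{\unitobj},\Lambda\rangle = 0$ forces $\Lambda^2 = 0$, i.e.\ $\Lambda$ is nilpotent.

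It remains to prove $(4)\Rightarrow(1)$. Assume $\langle\varepsilon_{\unitobj},\Lambda\rangle\neq 0$ and rescale $\Lambda$ so that $\langle\varepsilon_{\unitobj},\Lambda\rangle = 1$; by the computation just made, $\Lambda$ is then an idempotent of $\CE(\mathcal{C})$. Transporting it through the algebra isomorphism $\psi$ of \eqref{eq:CE-and-End-id} yields an idempotent natural transformation $P := \psi(\Lambda)\colon\id_{\mathcal{C}}\to\id_{\mathcal{C}}$ with $P_X = \rho_X\circ(\Lambda\otimes\id_X)$. Since $\mathcal{C}$ is a finite abelian category it is idempotent-complete, so there is a natural splitting $X\cong\operatorname{im}(P_X)\oplus\ker(P_X)$. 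The heart of the argument is the claim that $\operatorname{im}(P_X)$ is a trivial object for every $X$. Writing $X\xrightarrow{\,r\,}\operatorname{im}(P_X)\xrightarrow{\,j\,}X$ for the canonical retraction and section (so $jr = P_X$, $rj = \id$), naturality of $\rho$ gives $\rho_{\operatorname{im}(P_X)} = r\circ\rho_X\circ(\id_A\otimes j)$, and a short diagram chase — composing with the mono $j$ on the left and the epi $r$ on the right — shows that the criterion of Proposition~\ref{prop:triv-obj} for $\operatorname{im}(P_X)$ is equivalent to the single identity $P_X\circ\rho_X\circ(\id_A\otimes P_X) = P_X\circ(\varepsilon_{\unitobj}\otimes\id_X)$. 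This identity I verify directly: expanding the left-hand side by associativity of the canonical action, then applying the integral equation \eqref{eq:integ-def} in the form $m\circ(\id_A\otimes\Lambda) = \varepsilon_{\unitobj}\otimes\Lambda$, and finally the idempotency $m\circ(\Lambda\otimes\Lambda) = \Lambda$, one arrives at $P_X\circ(\varepsilon_{\unitobj}\otimes\id_X)$. Note that the two occurrences of $P_X$ (rather than one) are exactly what lets the one-sided integral equation of Definition~\ref{def:integ} suffice; no two-sided integral is needed.

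Granting the claim, the proof finishes as follows. For a trivial object $Y$ we have $\rho_Y = \varepsilon_{\unitobj}\otimes\id_Y$ by Proposition~\ref{prop:triv-obj}, so $P_Y = \rho_Y\circ(\Lambda\otimes\id_Y) = \langle\varepsilon_{\unitobj},\Lambda\rangle\,\id_Y = \id_Y$. Hence, if $T_X\hookrightarrow X$ denotes the maximal trivial subobject of $X$, naturality of $P$ gives $T_X\subseteq\operatorname{im}(P_X)$, while the claim gives $\operatorname{im}(P_X)\subseteq T_X$; therefore $\operatorname{im}(P_X) = T_X$, and being the image of an idempotent it is a direct summand of $X$, for every $X$. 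Applying this to the injective hull $I$ of $\unitobj$: the unit object is simple and essential in $I$, so $\operatorname{soc}(I) = \unitobj$ and thus $T_I = \unitobj$; an essential subobject that is also a direct summand must be all of $I$, so $\unitobj = I$, i.e.\ $\unitobj$ is injective. Since $X\cong X\otimes\unitobj$ and $X\otimes(-)$ preserves injectives (its left adjoint ${}^*X\otimes(-)$ being exact), every object of $\mathcal{C}$ is injective, so every short exact sequence in $\mathcal{C}$ splits and $\mathcal{C}$ is semisimple.

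The main obstacle is the claim that $\operatorname{im}(P_X)$ is trivial, and within it the reduction to the identity $P_X\circ\rho_X\circ(\id_A\otimes P_X) = P_X\circ(\varepsilon_{\unitobj}\otimes\id_X)$. The diagram chase identifying this identity with the criterion of Proposition~\ref{prop:triv-obj} is elementary but must be carried out carefully with the mono $j$ and epi $r$; the verification of the identity itself is a genuine, if short, computation with the multiplication and counit of the adjoint algebra and the module axioms for $\rho$, and one must make sure the normalization $\langle\varepsilon_{\unitobj},\Lambda\rangle = 1$ is used precisely where idempotency of $\Lambda$ enters.
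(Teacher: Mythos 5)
Your proposal is correct and takes essentially the same route as the paper: the chain $(1)\Rightarrow(2)\Rightarrow(3)\Rightarrow(4)$ is identical, and for $(4)\Rightarrow(1)$ both arguments normalize $\Lambda$ to an idempotent, pass to the corresponding idempotent endomorphism of $\id_{\mathcal{C}}$, and use Proposition~\ref{prop:triv-obj} to see that its image is a trivial direct summand of every object. The only real divergence is the last step --- the paper splits an arbitrary epimorphism onto $\unitobj$ to show $\unitobj$ is projective, while you apply the splitting to the injective hull of $\unitobj$ to show $\unitobj$ is injective --- and your explicit verification that $\rho$ restricted to the image of $P_X$ equals $\varepsilon_{\unitobj}\otimes\id$ correctly fills in what the paper asserts with only ``by the definition of an integral.''
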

\begin{proof}
  If $\mathcal{C}$ is semisimple, then $\End(\id_{\mathcal{C}})$ is isomorphic to a finite direct product of the base field $k$. Thus (1) implies (2). The implication (2) $\Rightarrow$ (3) follows from the fact that $\CE(\mathcal{C})$ is isomorphic to the commutative algebra $\End(\id_{\mathcal{C}})$. Now suppose that $\Lambda$ is not nilpotent. Then we have $\langle \varepsilon_{\unitobj}, \Lambda \rangle \, \Lambda = \Lambda \cdot \Lambda \ne 0$ and thus $\langle \varepsilon_{\unitobj}, \Lambda \rangle \ne 0$. Hence (3) implies (4).

  Finally, we prove that (4) implies (1). Suppose $\langle \varepsilon_{\unitobj}, \Lambda \rangle \ne 0$.
  By normalizing the given integral, we may assume that $\langle \varepsilon_{\unitobj}, \Lambda \rangle = 1$ so that $\Lambda$ is an idempotent.
  Recall that $\mathcal{C}$ is semisimple if and only if the unit object $\unitobj \in \mathcal{C}$ is projective.
  Thus, to show that $\mathcal{C}$ is semisimple, it is enough to show that every epimorphism $p: X \to \unitobj$ in $\mathcal{C}$ splits.
  Let $e$ denote the natural transformation corresponding to $\Lambda$.
  Since $e$ is an idempotent, we have $X = T \oplus K$, where $T$ and $K$ are the image and the kernel of $e_X: X \to X$, respectively.
  By the definition of an integral, we have $\rho_T = \varepsilon_{\unitobj} \otimes \id_T$.
  Hence, by Proposition~\ref{prop:triv-obj}, $T$ is a trivial object.
  On the other hand, if $f: K \to \unitobj$ is a morphism in $\mathcal{C}$, then
  $f = e_{\unitobj} \circ f = f \circ e_K = 0$ by the naturality of $e$. Thus $p = \bar{p} \oplus 0: T \oplus K \to \unitobj$ for some $\bar{p}: T \to \unitobj$.
  Since $T$ is a trivial object, $\bar{p}$ splits. This implies that $p$ also splits.
\end{proof}

\subsection{Trace formula}

In the theory of finite-dimensional Hopf algebras, there are some trace formulas involving integrals.
Here we formulate and prove one of such formulas in the setting of unimodular finite tensor categories.

For simplicity, we assume that $R(V) = (\bar{Z}(V), \delta_V)$ is the free $\bar{Z}$-comodule functor
and set $\mathbf{A} = R(\unitobj)$ as in \S\ref{subsec:unimoftc}.
For $f \in \CF(\mathcal{C})$, we denote by $\tilde{f}: \mathbf{A} \to \mathbf{A}$ the morphism in $\mathcal{Z}(\mathcal{C})$ corresponding to $f$ via the isomorphism of Theorem~\ref{thm:class-ft-induc}.
Recall that the {\em Drinfeld isomorphism} in $\mathcal{Z}(\mathcal{C})$ is given by
\begin{equation*}
  \psi_{\mathbf{X}}: \mathbf{X}
  \xrightarrow{\  \id \otimes \coev \ }
  \mathbf{X} \otimes \mathbf{X}^* \otimes \mathbf{X}^{**}
  \xrightarrow{\  \sigma \otimes \id \ }
  \mathbf{X}^* \otimes \mathbf{X} \otimes \mathbf{X}^{**}
  \xrightarrow{\  \eval \otimes \id \ }
  \mathbf{X}^{**}
\end{equation*}
for $\mathbf{X} \in \mathcal{Z}(\mathcal{C})$, where $\sigma$ is the braiding of $\mathcal{Z}(\mathcal{C})$.
By \cite[\S7]{2013arXiv1309.4539S}, $\psi_{\mathbf{A}} = j_A$ if $\mathcal{C}$ has a pivotal structure $j$.
Thus, by abuse of notation, we define $\trace(\xi)$ for $\xi: A \to A$ in $\mathcal{C}$ to be the element of $k$ corresponding to the morphism
\begin{equation*}
  \unitobj
  \xrightarrow{\  \coev \ } A \otimes A^*
  \xrightarrow{\  \xi \otimes \id_A \ } A \otimes A^*
  \xrightarrow{\  \psi_{\mathbf{A}} \otimes \id_{A^*} \ } A^{**} \otimes A^{*}
  \xrightarrow{\  \eval \ } \unitobj
\end{equation*}
via the canonical isomorphism $k \cong \End_{\mathcal{C}}(\unitobj)$.

Now let $\lambda$ be a cointegral in $\mathcal{C}$, and let $\Lambda$ be the integral such that $\langle \lambda, \Lambda \rangle = 1$ (which exists by Proposition~\ref{prop:integ-non-degen}).
The following proposition can be thought of as an analogue of Radford's trace formula \cite[Proposition 2]{MR1265853}; see \S\ref{subsec:Hopf-alg-Fourier}.

\begin{proposition}
  \label{prop:trace-formula}
  $\trace(\tilde{f}) = \langle f, \Lambda \rangle \, \langle \lambda, u \rangle$ for all $f \in \CF(\mathcal{C})$.
\end{proposition}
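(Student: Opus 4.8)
The plan is to transport everything into the braided category $\mathcal{Z}(\mathcal{C})$, where $\mathbf{A} = R(\unitobj)$ is a commutative Frobenius algebra with Frobenius form $\lambda$ (see \S\ref{subsec:unimoftc}), and to exploit the fact that $\tilde{f}$ is, by construction, a morphism in $\mathcal{Z}(\mathcal{C})$, i.e.\ a $\bar{Z}$-comodule morphism. Since $U\colon\mathcal{Z}(\mathcal{C})\to\mathcal{C}$ is strong monoidal and commutes with the duality functors, and since $\psi_{\mathbf{A}}$ is the Drinfeld isomorphism of $\mathcal{Z}(\mathcal{C})$, the scalar $\trace(\tilde{f})$ is nothing but the right (quantum) trace in $\mathcal{Z}(\mathcal{C})$ of $\tilde{f}\in\End_{\mathcal{Z}(\mathcal{C})}(\mathbf{A})$, computed with the Drinfeld isomorphism in the role of a pivotal structure; in particular it may be evaluated with the help of the universal dinatural transformation $\pi$ and the Hopf comonad structure of $\bar{Z}$.

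The first step is to rewrite this trace using the Frobenius structure. The isomorphism $\phi_{\lambda}\colon\mathbf{A}\to\mathbf{A}^{*}$ of \eqref{eq:Fb-iso} identifies the chosen left dual $(\mathbf{A}^{*},\eval,\coev)$ with the dual pair $(\mathbf{A},e_{\lambda},d_{\lambda})$ of \eqref{eq:Fb-dual-basis}, so substituting this into the definition of $\trace$ replaces the loop over $\mathbf{A}^{*}$ by a loop over $\mathbf{A}$ and gives an expression of the shape $\trace(\tilde{f}) = E\circ(\tilde{f}\otimes\id_{\mathbf{A}})\circ d_{\lambda}$, where $E\colon\mathbf{A}\otimes\mathbf{A}\to\unitobj$ is obtained from the Drinfeld-twisted evaluation of $\mathbf{A}$ by composing with $\id\otimes\phi_{\lambda}$. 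The crucial point here---which I expect to be the main obstacle---is to identify $E$ with the Frobenius pairing $\lambda\circ m$, up to a braiding that is harmless by the commutativity \eqref{eq:adj-alg-comm} of $\mathbf{A}$: this is precisely the assertion that the Drinfeld isomorphism of $\mathbf{A}$ compensates for the Nakayama automorphism of the Frobenius algebra $(\mathbf{A},\lambda)$, which I would check from the explicit description of the Drinfeld isomorphism on $\mathbf{A}$ recalled in \S\ref{subsec:int-theory} together with \eqref{eq:adj-alg-comm} and the Frobenius relation \eqref{eq:Fb-comul}. (One should not expect $E=\lambda\circ m$ to yield the correct value for an \emph{arbitrary} endomorphism of $A$: it is the combination of this identity with the comodule property of $\tilde{f}$ below that makes the formula hold, as one already sees in the Hopf algebra case.)

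Granting $\trace(\tilde{f}) = \lambda\circ m\circ(\tilde{f}\otimes\id_{\mathbf{A}})\circ d_{\lambda}$, I would finish by a computation with the comonad $\bar{Z}$. Writing $\tilde{f} = \bar{Z}(f)\circ\delta_{\unitobj}$ and using the naturality of the monoidal structure $\bar{Z}_{2}$ (so that $m$ absorbs $\bar{Z}(f)$ into a single $\bar{Z}$-image), the comonad axioms, the commutativity \eqref{eq:adj-alg-comm} to interchange the two tensor factors, and above all the defining equation \eqref{eq:coint-def} of the cointegral---which says exactly that $\lambda$ intertwines the coaction $\delta_{\unitobj}$ with the trivial coaction $u=\bar{Z}_{0}$ of the unit object of $\mathcal{Z}(\mathcal{C})$---one collapses the $\bar{Z}$-coaction and reduces the whole expression to $(\lambda\circ u)\cdot\bigl((f\otimes\varepsilon_{\unitobj})\circ d_{\lambda}\bigr)$. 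Finally, Proposition~\ref{prop:integ-non-degen} gives $\Lambda=(\id_{A}\otimes\varepsilon_{\unitobj})\circ d_{\lambda}$, whence $(f\otimes\varepsilon_{\unitobj})\circ d_{\lambda} = f\circ\Lambda = \langle f,\Lambda\rangle\,\id_{\unitobj}$ and $\lambda\circ u = \langle\lambda,u\rangle\,\id_{\unitobj}$, which is the asserted identity. As a consistency check, specializing to $\mathcal{C}=H\mbox{-}\mathsf{mod}$ for a unimodular Hopf algebra $H$ recovers the classical trace formula for the operator $h\mapsto h_{(1)}\langle f,h_{(2)}\rangle$ on $H$ (cf.\ \S\ref{subsec:Hopf-alg-Fourier}), which serves as the template for the whole argument.
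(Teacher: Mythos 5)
Your proposal is correct and follows essentially the same route as the paper's proof: replace the chosen dual of $\mathbf{A}$ by the Frobenius dual $(\mathbf{A},e_{\lambda},d_{\lambda})$, remove the braiding via the commutativity \eqref{eq:adj-alg-comm}, and collapse $m\circ(\tilde{f}\otimes\id_{\mathbf{A}})\circ d_{\lambda}$ to $\langle f,\Lambda\rangle\,u$ using the Hopf comonad structure together with Proposition~\ref{prop:integ-non-degen}. The only cosmetic differences are that the paper encodes the role of the cointegral through the fact that $d_{\lambda}$ is a morphism in $\mathcal{Z}(\mathcal{C})$ (its equation \eqref{eq:lem-trace-1-1}) rather than invoking \eqref{eq:coint-def} directly, and that your anticipated obstacle about the Nakayama automorphism evaporates exactly as you predict, since $e_{\lambda}\circ\sigma_{\mathbf{A},\mathbf{A}}=e_{\lambda}$ by \eqref{eq:adj-alg-comm}.
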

\begin{proof}
  By the definition of the Drinfeld isomorphism, $\trace(\tilde{f})$ is equal to
  \begin{equation*}
    \unitobj
    \xrightarrow{\quad \coev \quad}
    \mathbf{A} \otimes \mathbf{A}^*
    \xrightarrow{\quad \tilde{f} \otimes \id \quad}
    \mathbf{A} \otimes \mathbf{A}^*
    \xrightarrow{\quad \sigma \quad}
    \mathbf{A}^* \otimes \mathbf{A}
    \xrightarrow{\quad \eval \quad} \unitobj.
  \end{equation*}
  This does not depend on the choice of a left dual object of $\mathbf{A}$.
  Thus we define $e_{\lambda}$ and $d_{\lambda}$ by \eqref{eq:Fb-dual-basis}
  and choose the triple $(\mathbf{A}, e_{\lambda}, d_{\lambda})$ as a left dual object of $\mathbf{A}$.
  Then, by~\eqref{eq:adj-alg-comm}, we have
  \begin{equation}
    \label{eq:trace-formula-pf-1}
    \trace(\tilde{f})
    = e_{\lambda} \circ \sigma_{\mathbf{A},\mathbf{A}} \circ (\tilde{f} \otimes \id_\mathbf{A}) \circ d_{\lambda}
    = e_{\lambda} \circ (\tilde{f} \otimes \id_\mathbf{A}) \circ d_{\lambda}
  \end{equation}
  for all $f \in \CF(\mathcal{C})$.

  Since $d_{\lambda}: \unitobj \to \mathbf{A} \otimes \mathbf{A}$ is a morphism in $\mathcal{Z}(\mathcal{C})$, we have
  \begin{equation}
    \label{eq:lem-trace-1-1}
    \bar{Z}(d_{\lambda}) \circ \bar{Z}_0
    = \bar{Z}_2(A, A) \circ (\delta_{\unitobj} \otimes \delta_{\unitobj}) \circ d_{\lambda}
  \end{equation}
  by the definition of the tensor product of $\bar{Z}$-comodules. We also have
  \begin{equation}
    \label{eq:lem-trace-1-2}
    \bar{Z}(\varepsilon_{\unitobj}) \circ \delta_{\unitobj} = \id_{\bar{Z}(\unitobj)}
  \end{equation}
  by the definition of a comonad. Now we compute
  \begin{align*}
    \allowdisplaybreaks
    & m \circ (\tilde{f} \otimes \id_A) \circ d_{\lambda} \\
    & = \bar{Z}_2(\unitobj, \unitobj) \circ (\bar{Z}(f) \otimes \bar{Z}(\varepsilon_{\unitobj})) \circ (\delta_{\unitobj} \otimes \delta_{\unitobj}) \circ d_{\lambda}
    & & \text{(by~\eqref{eq:class-ft-adj-iso} and~\eqref{eq:lem-trace-1-2})} \\
    & = \bar{Z}(f \otimes \varepsilon_{\unitobj}) \circ \bar{Z}_2(A, A) \circ (\delta_{\unitobj} \otimes \delta_{\unitobj}) \circ d_{\lambda}
    & & \text{(by the naturality of $\bar{Z}_2$)} \\
    & = \bar{Z}(f \otimes \varepsilon_{\unitobj}) \circ \bar{Z}(d_{\lambda}) \circ \bar{Z}_0
    & & \text{(by~\eqref{eq:lem-trace-1-1})} \\
    & = \bar{Z}(f \circ \Lambda) \circ \bar{Z}_0
    & & \text{(by Proposition~\ref{prop:integ-non-degen})} \\
    & = \langle f, \Lambda \rangle \, \bar{Z}(\id_{\unitobj}) \circ \bar{Z}_0
    = \langle f, \Lambda \rangle \, u.
  \end{align*}
  The claim now follows from~\eqref{eq:trace-formula-pf-1} and the definition of $e_{\lambda}$.
\end{proof}

\subsection{Fourier transform}

Let $\lambda$ be a non-zero cointegral in $\mathcal{C}$, and let $\Lambda$ be the integral such that $\langle \lambda, \Lambda \rangle = 1$.

\begin{definition}
  The {\em Fourier transform} (associated to $\lambda$) is the linear map
  \begin{equation}
    \label{eq:Fourier-tr}
    \mathfrak{F}_{\lambda}: \CE(\mathcal{C}) \to \CF(\mathcal{C}),
    \quad \mathfrak{F}_{\lambda}(a) = \lambda \leftharpoonup \mathfrak{S}(a),
  \end{equation}
  where $\mathfrak{S}: \CE(\mathcal{C}) \to \CE(\mathcal{C})$ is the antipodal operator of Definition~\ref{def:antipodal}
  and $\leftharpoonup$ is the action of $\CE(\mathcal{C})$ on $\CF(\mathcal{C})$ given by
  \begin{equation}
    f \leftharpoonup b = f \circ m \circ (b \otimes \id_A)
    \quad (f \in \CF(\mathcal{C}), b \in \CE(\mathcal{C})).
  \end{equation}
\end{definition}

The Fourier transform should be defined in a form involving $\mathfrak{S}$ in view of the case of finite-dimensional Hopf algebras (see the discussion in \S\ref{subsec:Hopf-alg-Fourier}).
However, it is convenient to consider the `twisted' Fourier transform
\begin{equation}
  \label{eq:tw-Fourier-tr}
  \mathfrak{F}'_{\lambda} = \mathfrak{F}_{\lambda} \circ \mathfrak{S}^{-1}.
\end{equation}
Since $(A, e_{\lambda}, d_{\lambda})$ is a left dual object of $A$, the inverse of $\mathfrak{F}'_{\lambda}$ is given by
\begin{equation}
  \label{eq:tw-Fourier-tr-inv}
  (\mathfrak{F}'_{\lambda})^{-1}(f) = (f \otimes \id_A) \circ d_{\lambda}
\end{equation}
for $f \in \CF(\mathcal{C})$. Thus $\mathfrak{F}_{\lambda}$ is also invertible.

As an application of the (twisted) Fourier transform, we prove:

\begin{theorem}
  \label{thm:unimo-gr}
  For a unimodular finite tensor category $\mathcal{C}$ admitting a pivotal structure, the following assertions are equivalent:
  \begin{enumerate}
  \item $\mathcal{C}$ is semisimple.
  \item $\dim_k \Gr_k(\mathcal{C}) = \dim_k \CF(\mathcal{C})$.
  \end{enumerate}
  Thus, the map $\mathsf{ch}: \Gr_k(\mathcal{C}) \to \CF(\mathcal{C})$ is bijective if and only if $\mathcal{C}$ is semisimple.
\end{theorem}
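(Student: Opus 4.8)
The plan is to reduce the equivalence to the injectivity of $\mathsf{ch}$ (Corollary~\ref{cor:irr-ch-indep-cor-1}) together with the integral theory of \S\ref{sec:integral}. Since $\mathsf{ch}$ is always injective, assertion (2) is equivalent to $\mathsf{ch}$ being bijective; thus the last sentence of the theorem follows automatically once the equivalence (1)$\Leftrightarrow$(2) is established, so I will concentrate on that. For (1)$\Rightarrow$(2): if $\mathcal{C}$ is semisimple then it is a pivotal fusion category, and Example~\ref{ex:indep-irr-ch-fusion} tells us that $\mathrm{Irr}(\mathcal{C})$ is a basis of $\CF(\mathcal{C})$; since $\{[V_0],\dots,[V_m]\}$ is a basis of $\Gr_k(\mathcal{C})$, both spaces have dimension $m+1$ and (2) holds.

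For (2)$\Rightarrow$(1): by injectivity, (2) forces $\mathsf{ch}$ to be surjective, so it suffices to prove that $\mathcal{C}$ is semisimple whenever $\mathsf{ch}$ is onto. I would fix a non-zero cointegral $\lambda$ and the integral $\Lambda$ with $\langle\lambda,\Lambda\rangle=1$ (Proposition~\ref{prop:integ-non-degen}), and let $e\in\End(\id_{\mathcal{C}})$ be the natural transformation corresponding to $\Lambda$ under \eqref{eq:CE-and-End-id}. Arguing by contradiction, suppose $\mathcal{C}$ is not semisimple; then $\Lambda$ is nilpotent in $\CE(\mathcal{C})$ by Proposition~\ref{prop:integ-Maschke}, hence $e$ is a nilpotent element of the finite-dimensional algebra $\End(\id_{\mathcal{C}})$. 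Consequently, for each simple object $V_i$ the endomorphism $e_{V_i}$ is a nilpotent element of $\End_{\mathcal{C}}(V_i)\cong k$, hence $e_{V_i}=0$. Since $e_X=\rho_X\circ(\Lambda\otimes\id_X)$ and $\mathsf{ch}(X)=\trace_{A,\unitobj}^X(\rho_X)$, precomposing the partial pivotal trace with $\Lambda$ yields $\mathsf{ch}(V_i)\circ\Lambda=\trace_{\unitobj,\unitobj}^{V_i}(e_{V_i})=\trace(e_{V_i})\cdot\id_{\unitobj}$, so $\langle\mathsf{ch}(V_i),\Lambda\rangle=\trace(e_{V_i})=0$ for all $i$. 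As $\{[V_i]\}$ spans $\Gr_k(\mathcal{C})$ and $\mathsf{ch}$ is linear, $\langle f,\Lambda\rangle=0$ for every $f$ in the image of $\mathsf{ch}$; by surjectivity this holds for all $f\in\CF(\mathcal{C})$, contradicting $\langle\lambda,\Lambda\rangle=1$. Hence $\mathcal{C}$ is semisimple.

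The only step that is not purely formal is the identity $\langle\mathsf{ch}(V_i),\Lambda\rangle=\trace(e_{V_i})$ --- that is, the principle that pairing an internal character with the integral computes the pivotal trace of the integral's action on the object. This is a short diagram chase from the definitions of $\mathsf{ch}$, of the canonical action $\rho$, and of the isomorphism \eqref{eq:CE-and-End-id}, using that the partial pivotal trace in the first variable is compatible with precomposition. Everything else --- the fusion case, the reduction via injectivity, and the nilpotency input --- is immediate once the machinery of \S\ref{sec:integral}, especially Propositions~\ref{prop:integ-Maschke} and~\ref{prop:integ-non-degen}, is in place. (If one prefers, the Fourier transform can be invoked to rewrite $\dim_k\CF(\mathcal{C})=\dim_k\CE(\mathcal{C})=\dim_k\End(\id_{\mathcal{C}})$, recasting (2) as a statement about $\End(\id_{\mathcal{C}})$ alone; but this reformulation is not needed for the argument above.)
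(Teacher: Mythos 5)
Your proposal is correct, and the $(1)\Rightarrow(2)$ direction and the reduction of the final sentence to the equivalence coincide with the paper's. For $(2)\Rightarrow(1)$, however, you take a genuinely different route. The paper observes that each irreducible character spans a one-dimensional $\CE(\mathcal{C})$-submodule of $\CF(\mathcal{C})$ (via the algebra maps $\xi_V$ coming from Schur's lemma), so that under (2) the module $\CF(\mathcal{C})$ is semisimple over $\CE(\mathcal{C})$; it then uses that the twisted Fourier transform $\mathfrak{F}'_{\lambda}$ is an isomorphism of $\CE(\mathcal{C})$-modules to conclude that $\CE(\mathcal{C})\cong\End(\id_{\mathcal{C}})$ is a semisimple algebra, and finishes with the implication $(2)\Rightarrow(1)$ of Proposition~\ref{prop:integ-Maschke}. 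You instead argue by contradiction through the nilpotency criterion $(3)$ of that same proposition: non-semisimplicity forces the integral $\Lambda$ to be nilpotent, hence $\psi(\Lambda)_{V_i}=0$ on every simple object, hence $\langle \mathsf{ch}(V_i),\Lambda\rangle=\trace(\psi(\Lambda)_{V_i})=0$, and surjectivity of $\mathsf{ch}$ then kills the pairing against $\Lambda$ entirely, contradicting $\langle\lambda,\Lambda\rangle=1$ from Proposition~\ref{prop:integ-non-degen}. Your key identity $\mathsf{ch}(X)\circ\Lambda=\trace(\psi(\Lambda)_X)\cdot\id_{\unitobj}$ is indeed an immediate consequence of the definitions of $\rho_X$, $\mathsf{ch}$ and \eqref{eq:CE-and-End-id} (slide $\Lambda$ through the partial trace), so the argument is complete. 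What each approach buys: yours is more elementary in that it never needs the $\CE(\mathcal{C})$-module structure on $\CF(\mathcal{C})$ nor the $\CE(\mathcal{C})$-linearity of the Fourier transform, only the non-degeneracy of the single pairing $\langle\lambda,\Lambda\rangle$; the paper's is slightly more structural and exhibits the mechanism (semisimplicity of $\CF(\mathcal{C})$ as a module transported to $\CE(\mathcal{C})$) that also underlies the later fusion-category applications.
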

\begin{proof}
  For simplicity, we set $\mathsf{E} = \CE(\mathcal{C})$ and $\mathsf{F} = \CF(\mathcal{C})$.
  Let $\psi: \mathsf{E} \to \End(\id_{\mathcal{C}})$ be the isomorphism given by~\eqref{eq:CE-and-End-id}. 
  By Schur's lemma, for each simple object $V \in \mathcal{C}$, there is a linear map $\xi_V: \mathsf{E} \to k$ such that $\psi(a)_V = \xi_V(a) \cdot \id_V$ for all $a \in \mathsf{E}$.
  One can easily verify that $\xi_V$ is an algebra map and
  \begin{equation*}
    \mathsf{ch}(V) \leftharpoonup a = \xi_V(a) \cdot \mathsf{ch}(V)
  \end{equation*}
  for all $a \in \mathsf{E}$. This means that the one-dimensional space spanned by an irreducible character is a one-dimensional submodule of the $\mathsf{E}$-module $\mathsf{F}$.

  The implication (1) $\Rightarrow$ (2) has been proved in Example~\ref{ex:indep-irr-ch-fusion}.
  If, conversely, (2) holds, then $\mathsf{F}$ is spanned by the irreducible characters by Theorem~\ref{thm:irr-ch-indep}, and hence a semisimple $\mathsf{E}$-module by the above argument.
  Since $\mathfrak{F}'_{\lambda}: \mathsf{E} \to \mathsf{F}$ is an isomorphism of $\mathsf{E}$-modules, $\mathsf{E}$ is also a semisimple $\mathsf{E}$-module. By Proposition~\ref{prop:integ-Maschke}, $\mathcal{C}$ is semisimple.
\end{proof}

We note that the unimodularity assumption of the above theorem is necessarily, since the equality of (2) holds if $\mathcal{C}$ is the representation category of the Taft algebra, which is neither unimodular nor semisimple; see Example~\ref{ex:Taft}.

The implication (2) $\Rightarrow$ (1) of the theorem can be rephrased as that, if $\mathcal{C}$ is a non-semisimple unimodular pivotal finite tensor category, then there exists an element of $\CF(\mathcal{C})$ that is not a linear combination of internal characters.
The gap between the dimensions of $\CF(\mathcal{C})$ and $\Gr_k(\mathcal{C})$ is an interesting subject but will not be discussed in this paper.

\subsection{The case of Hopf algebras}
\label{subsec:Hopf-alg-Fourier}

To conclude this section, we consider the case where $\mathcal{C} = H\mbox{-{\sf mod}}$ is the representation category a finite-dimensional Hopf algebra $H$ over $k$.
We use the same notations as in \S\ref{subsec:Hopf-alg} and identify $\mathcal{Z}(\mathcal{C})$ with the Yetter-Drinfeld category.

As we have seen in \S\ref{subsec:Hopf-alg}, the algebra $\CE(\mathcal{C})$ can be identified with the center of the algebra $H$.
By Definition~\ref{def:integ}, an integral in $\mathcal{C}$ is an element $\Lambda \in H$ such that
\begin{equation*}
  h_{(1)} \Lambda S(h_{(2)}) = \varepsilon(h) \Lambda
  \quad \text{and} \quad
  a \Lambda = \varepsilon(a) \Lambda
\end{equation*}
for all $a, h \in H$. The former equation implies that the element $\Lambda$ is central,
and the latter means that $\Lambda$ is a left integral in $H$ in the usual sense.
Thus a non-zero integral in $\mathcal{C}$ exists if and only if $H$ is unimodular.

By definition, a cointegral in $\mathcal{C}$ is a morphism $\lambda: R(k) \to k$ of Yetter-Drinfeld modules.
Thus it is the same thing as a linear map $\lambda: H \to k$ such that
\begin{equation*}
  a_{(1)} \, \langle \lambda, a_{(2)} \rangle
  = \varepsilon(a) 1_H
  \quad \text{and} \quad
  \langle \lambda, h_{(1)} a S(h_{(2)}) \rangle = \langle \varepsilon, h \rangle \langle \lambda, a \rangle
\end{equation*}
for all $a, h \in H$. The former condition means that $\lambda$ is a right cointegral on $H$.
By the result of Radford \cite{MR1265853}, a non-zero right cointegral on $H$ satisfies the latter condition if and only if $H$ is unimodular.

Now we suppose that $H$ is unimodular.
We fix a non-zero cointegral $\lambda$ in $\mathcal{C}$, and then let $\Lambda$ be the (two-sided) integral such that $\langle \lambda, \Lambda \rangle = 1$.
The Frobenius property of the algebra $\mathbf{A} := R(k)$ in ${}^H_H \mathcal{YD}$ has been pointed out by Ishii and Masuoka in \cite{MR3265394} in their study of handlebody-knot invariants.
According to them, the map
\begin{equation}
  d_{\lambda}: k \to \mathbf{A} \otimes \mathbf{A},
  \quad 1 \mapsto \Lambda_{(2)} \otimes S^{-1}(\Lambda_{(1)})
\end{equation}
is a morphism in ${}^H_H \mathcal{YD}$ and the triple $(\mathbf{A}, e_{\lambda}, d_{\lambda})$, where $e_{\lambda} = \lambda \circ m$, is a left dual object of $\mathbf{A}$.
The formula of $\Lambda$ in Proposition~\ref{prop:integ-non-degen} reduces to
\begin{equation*}
  \Lambda = \Lambda_{(2)} \langle \varepsilon, S^{-1}(\Lambda_{(1)}) \rangle,
\end{equation*}
which is obviously true from the axiom of Hopf algebras.

Let $\psi$ be the Drinfeld isomorphism in $\mathcal{Z}(\mathcal{C})$. Then we have
\begin{equation*}
  \langle \psi_{\mathbf{A}}(a), p \rangle = \langle p, S^2(a) \rangle
  \quad (a \in \mathbf{A}, p \in H^*)
\end{equation*}
(see the proof of \cite[Proposition 7.1]{2013arXiv1309.4539S}).
For $f \in \CF(\mathcal{C})$, the morphism $\widetilde{f}: \mathbf{A} \to \mathbf{A}$ of Proposition~\ref{prop:trace-formula} is the linear map given
by $a \mapsto a_{(1)} \langle f, a_{(2)} \rangle$ for $a \in \mathbf{A}$.
Hence, in our context, the claim of the proposition reduces to
\begin{equation*}
  \Trace\Big( A \to A; \, a \mapsto S^{2}(a_{(1)}) \langle f, a_{(2)} \rangle \Big) = \langle f, \Lambda \rangle \langle \lambda, 1 \rangle,
\end{equation*}
which is a special case of Radford's formula given in \cite{MR1265853}.

Recall that $H$ acts on $H^*$ from the right by $\langle f \leftharpoonup h, h' \rangle = \langle f, h h' \rangle$.
If we identify $\CE(\mathcal{C})$ with the center of $H$, then the Fourier transform associated $\lambda$ is given by
\begin{equation}
  \label{eq:Fourier-Hopf}
  \mathfrak{F}_{\lambda}: \CE(\mathcal{C}) \to \CF(\mathcal{C}),
  \quad \mathfrak{F}_{\lambda}(a) = \lambda \leftharpoonup S^{-1}(a)
\end{equation}
for $a \in \CE(\mathcal{C})$.
This operator coincides with a restriction of the Fourier transform on $H^{\mathrm{cop}}$ introduced and studied by Cohen and Westreich in \cite{MR2349620}.
We note that the inverse of \eqref{eq:Fourier-Hopf} is given by
\begin{equation}
  \label{eq:Fourier-Hopf-inv}
  \mathfrak{F}_{\lambda}^{-1}(f) = \langle f, \Lambda_{(1)} \rangle \Lambda_{(2)}
  \quad (f \in \CF(\mathcal{C})).
\end{equation}
The space of symmetric forms on a finite-dimensional Hopf algebra is also studied by Cohen and Westreich in \cite{MR2464107}.
Our main result in this section (Theorem~\ref{thm:unimo-gr}) corresponds to a part of \cite[Corollary 2.3]{MR2464107}.

\section{Applications to fusion categories}
\label{sec:appl-fusion}

\subsection{Integrals in a fusion category}

Finally, we give some applications of our theory to fusion categories.
Throughout this section, $\mathcal{C}$ is a pivotal fusion category over an algebraically closed field $k$,
and we use the set of notations as in \S\ref{subsec:FTC}.
Furthermore, let $\{ V_0, \dotsc, V_m \}$ be a complete set of representatives of isomorphism classes of simple objects with $V_0 = \unitobj$.
For $i \in \{ 0, \dotsc, m \}$, we define $i^* \in \{ 0, \dotsc, m \}$ by $V_{i^*} \cong V_i^*$.
Then $i \mapsto i^*$ is an involution on $\{ 0, \dotsc, m \}$.

Since a  fusion category is unimodular \cite{MR2097289},
$\mathcal{C}$ has a non-zero integral and a non-zero cointegral in the sense of Definition~\ref{def:integ}.
To describe them, we introduce further notations.
As we have remarked in Example~\ref{ex:indep-irr-ch-fusion}, we may choose
\begin{equation*}
  A = \bigoplus_{i = 0}^m V_i \otimes V_i^*
\end{equation*}
as a realization of the adjoint algebra. For $i = 0, \dotsc, m$, we define $e_i$ and $\chi_i$ by
\begin{equation*}
  e_i: \unitobj \xrightarrow{\ \coev \ } V_i \otimes V_i^*
  \xrightarrow{\ \text{inclusion} \ } A
  \quad \text{and} \quad
  \chi_i: A \xrightarrow{\ \text{projection} \ } V_i \otimes V_i^*
  \xrightarrow{\ \widetilde{\eval} \ } \unitobj,
\end{equation*}
respectively.
It is easy to see that the sets $\{ \chi_i \}_{i = 0, \dotsc, m}$ and $\{ e_i \}_{i - 0, \dotsc, m}$ are bases of $\CF(\mathcal{C})$ and $\CE(\mathcal{C})$, respectively, such that
\begin{equation}
  \label{eq:fusion-cf-ce-paring}
  \langle \chi_i, e_j \rangle = \dim(V_j) \Kdelta_{i, j}
  \quad (i, j  = 0, \dotsc, m),
\end{equation}
where $\Kdelta_{-,-}$ is the Kronecker delta.

As we have seen in Example~\ref{ex:indep-irr-ch-fusion}, $\chi_i = \mathsf{ch}(V_i)$ for $i = 0, \dotsc, m$.
The central element $e_i$ corresponds via~\eqref{eq:CE-and-End-id} to the endomorphism on $\id_{\mathcal{C}}$
that is the identity on $V_i$ and zero on the other $V_j$'s.
Thus we have
\begin{equation}
  \label{eq:fusion-cent-elem}
  e_i \cdot e_j = \Kdelta_{i,j} e_i
  \quad \text{and} \quad
  \mathfrak{S}(e_i) = e_{i^*} \quad (i, j = 0, \dotsc, m),
\end{equation}
where $\mathfrak{S}: \CE(\mathcal{C}) \to \CE(\mathcal{C})$ is the antipodal operator of Definition~\ref{def:antipodal}.

\begin{lemma}
  $e_0 \in \CE(\mathcal{C})$ is an integral in $\mathcal{C}$.
\end{lemma}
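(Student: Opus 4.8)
The plan is to verify Definition~\ref{def:integ} literally, i.e.\ to check that $m \circ (\id_A \otimes e_0) = \varepsilon_{\unitobj} \otimes e_0$. Since $\mathcal{C}$ is strict, the source $A \otimes \unitobj$ and the object $\unitobj \otimes A$ are both identified with $A$, so the identity to prove is $m \circ (\id_A \otimes e_0) = e_0 \circ \varepsilon_{\unitobj}$ in $\End_{\mathcal{C}}(A)$. Both sides are morphisms \emph{into} $A$; because we have realized $A = \bigoplus_{i=0}^m V_i \otimes V_i^*$ in such a way that $\pi_{\unitobj; V_j} \colon A \to V_j \otimes V_j^*$ is the projection onto the $j$-th summand (Example~\ref{ex:indep-irr-ch-fusion}), a morphism with target $A$ is determined by its composites with all the $\pi_{\unitobj; V_j}$. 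So it suffices to show, for each $j \in \{0, \dotsc, m\}$,
\[
  \pi_{\unitobj; V_j} \circ m \circ (\id_A \otimes e_0)
  = \pi_{\unitobj; V_j} \circ e_0 \circ \varepsilon_{\unitobj}.
\]

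For the left-hand side I would apply the defining formula~\eqref{eq:adj-obj-mult} for $m$, which turns $\pi_{\unitobj; V_j} \circ m$ into $(\id_{V_j} \otimes \eval_{V_j} \otimes \id_{V_j^*}) \circ (\pi_{\unitobj; V_j} \otimes \pi_{\unitobj; V_j})$, so that the left-hand side becomes $(\id_{V_j} \otimes \eval_{V_j} \otimes \id_{V_j^*}) \circ \bigl( \pi_{\unitobj; V_j} \otimes (\pi_{\unitobj; V_j} \circ e_0) \bigr)$. The key elementary fact is that, since $V_0 = \unitobj$ and $e_0$ is the inclusion of the $0$-th summand $V_0 \otimes V_0^* \cong \unitobj$ of $A$ (precomposed with $\coev_{\unitobj} = \id_{\unitobj}$), one has $\pi_{\unitobj; V_j} \circ e_0 = \id_{\unitobj}$ for $j = 0$ and $\pi_{\unitobj; V_j} \circ e_0 = 0$ for $j \neq 0$. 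Hence for $j \neq 0$ both sides of the displayed equation vanish. For $j = 0$, using additionally $\eval_{V_0} = \eval_{\unitobj} = \id_{\unitobj}$ and $\varepsilon_{\unitobj} = \pi_{\unitobj; \unitobj} = \pi_{\unitobj; V_0}$ (the latter by~\eqref{eq:character-unit-obj}), the left-hand side collapses to $\pi_{\unitobj; V_0}$ and the right-hand side is $\id_{\unitobj} \circ \varepsilon_{\unitobj} = \pi_{\unitobj; V_0}$ as well. Collecting the cases gives $m \circ (\id_A \otimes e_0) = e_0 \circ \varepsilon_{\unitobj}$, which is~\eqref{eq:integ-def} for $\Lambda = e_0$.

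There is no serious obstacle; the argument is a short computation once one has set it up componentwise. The only point deserving a little care is the strictness and normalization bookkeeping around the unit object — identifying $A \otimes \unitobj$ with $A$, using $\eval_{\unitobj} = \coev_{\unitobj} = \id_{\unitobj}$, and recalling $\varepsilon_{\unitobj} = \pi_{\unitobj; \unitobj}$ — so that the $j = 0$ component really does reduce to $\pi_{\unitobj; V_0}$ on both sides. If one prefers to avoid invoking the normalization of $\coev_{\unitobj}$, the needed identity $\pi_{\unitobj; V_0} \circ e_0 = \id_{\unitobj}$ follows instead from $\langle \varepsilon_{\unitobj}, e_0 \rangle = \langle \mathsf{ch}(\unitobj), e_0 \rangle = \dim(\unitobj) = 1$ via~\eqref{eq:fusion-cf-ce-paring} together with~\eqref{eq:character-unit-obj}.
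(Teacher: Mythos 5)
Your proof is correct, but it proceeds quite differently from the paper's. The paper does not verify \eqref{eq:integ-def} directly: it only checks the weaker \emph{necessary} condition $a \cdot \Lambda = \langle \varepsilon_{\unitobj}, a\rangle\,\Lambda$ for $a \in \CE(\mathcal{C})$ (which $e_0$ satisfies by \eqref{eq:fusion-cf-ce-paring} and $e_i \cdot e_j = \Kdelta_{i,j} e_i$), observes that since $\CE(\mathcal{C}) \cong k \times \dotsb \times k$ the solution space of that condition is one-dimensional, and then invokes the \emph{existence} of a non-zero integral (from the unimodularity of fusion categories and the Frobenius-algebra argument of \S\ref{subsec:int-theory}) to conclude that $e_0$ must be a scalar multiple of it, hence itself an integral. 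You instead verify the defining equation $m \circ (\id_A \otimes e_0) = \varepsilon_{\unitobj} \otimes e_0$ head-on, componentwise against the projections $\pi_{\unitobj; V_j}$, using \eqref{eq:adj-obj-mult} and the explicit realization $A = \bigoplus_i V_i \otimes V_i^*$. Your route is more elementary and self-contained -- in particular it does not presuppose that a non-zero integral exists, and it establishes the full module equation rather than only its restriction to central elements -- at the cost of some bookkeeping with the unit-object normalizations, which you handle correctly (and your fallback via $\langle \varepsilon_{\unitobj}, e_0\rangle = \dim(\unitobj) = 1$ is consistent with \eqref{eq:character-unit-obj}). The paper's route is shorter given the machinery already developed and generalizes the uniqueness-based reasoning it uses elsewhere (e.g.\ for cointegrals in Lemma~\ref{lem:fusion-coint}).
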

\begin{proof}
  The linear map $\epsilon: \CE(\mathcal{C}) \to k$ given by $a \mapsto \langle \varepsilon_{\unitobj}, a \rangle$ is a homomorphism of algebras.
  If $\Lambda$ is an integral in $\mathcal{C}$, then we have
  \begin{equation}
    \label{eq:fusion-integ-cond}
    a \cdot \Lambda = \epsilon(a) \Lambda \quad (a \in \CE(\mathcal{C})).
  \end{equation}
  by the definition of an integral.
  By \eqref{eq:fusion-cf-ce-paring} and \eqref{eq:Fb-alg-alpha-integ}, $\Lambda = e_0$ satisfies~\eqref{eq:fusion-integ-cond}.
  On the other hand, since $\CE(\mathcal{C}) \cong k \times \dotsb \times k$ as algebras, an element $\Lambda$ satisfying \eqref{eq:fusion-integ-cond} is unique up to scalar multiples.
  Thus we conclude that $e_0$ is an integral in $\mathcal{C}$.
\end{proof}

To give a description of a cointegral, we provide:

\begin{lemma}
  $\Gr_k(\mathcal{C})$ is a symmetric Frobenius algebra with the trace given by
  \begin{equation*}
    \Gr_k(\mathcal{C}) \to k,
    \quad [X] \mapsto \dim_k \Hom_{\mathcal{C}}(\unitobj, X).
  \end{equation*}
\end{lemma}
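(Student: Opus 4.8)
The plan is to verify directly that the $k$-linear functional $\tau \colon \Gr_k(\mathcal{C}) \to k$ given by $\tau([X]) = \dim_k \Hom_{\mathcal{C}}(\unitobj, X)$ makes $\Gr_k(\mathcal{C})$ a symmetric Frobenius algebra; equivalently, that the bilinear form $\beta(a,b) := \tau(ab)$ is non-degenerate and symmetric. That $\beta$ is associative, $\beta(ab,c) = \tau(abc) = \beta(a,bc)$, is automatic, and symmetry of $\beta$ is the same as $\tau(ab) = \tau(ba)$, i.e. $\tau$ being a trace. First I would note that $\tau$ is well defined: since $\mathcal{C}$ is semisimple every short exact sequence splits, so $\dim_k \Hom_{\mathcal{C}}(\unitobj, -)$ is additive on direct sums and descends to a group homomorphism $\Gr(\mathcal{C}) \to \mathbb{Z}$, which we extend $k$-linearly to $\tau$. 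Note also $\tau([V_i]) = \Kdelta_{i,0}$ by Schur's lemma.

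The core of the proof is the evaluation of $\beta$ on the basis $\{[V_i]\}_{i=0}^m$. Using the rigidity isomorphism $\Hom_{\mathcal{C}}(\unitobj, V_i \otimes V_j) \cong \Hom_{\mathcal{C}}(V_i^*, V_j)$, together with the facts that $V_i^* \cong V_{i^*}$ is simple and that every simple object of $\mathcal{C}$ has endomorphism algebra $k$ (being a finite-dimensional division algebra over the algebraically closed field $k$), Schur's lemma yields
\[
  \beta([V_i],[V_j]) = \dim_k \Hom_{\mathcal{C}}(\unitobj, V_i \otimes V_j) = \dim_k \Hom_{\mathcal{C}}(V_{i^*}, V_j) = \Kdelta_{i^*, j} = \Kdelta_{i, j^*},
\]
the last equality because $\ell \mapsto \ell^*$ is an involution on $\{0,\dotsc,m\}$. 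Since $\Kdelta_{i,j^*}$ is symmetric under exchanging $i$ and $j$, the form $\beta$ is symmetric, so $\tau$ is a trace.

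For non-degeneracy I would read off the dual basis from the same formula: $\beta([V_i],[V_{j^*}]) = \Kdelta_{i,(j^*)^*} = \Kdelta_{i,j}$, so $\{[V_{i^*}]\}_{i=0}^m$ is dual to $\{[V_i]\}_{i=0}^m$ with respect to $\beta$; in particular the Gram matrix of $\beta$ in the basis $\{[V_i]\}$ is the permutation matrix of the involution $i \mapsto i^*$, hence invertible. Therefore $\beta$ is a non-degenerate, symmetric, associative bilinear form, i.e. $(\Gr_k(\mathcal{C}), \tau)$ is a symmetric Frobenius algebra, as claimed. The only step that is not purely formal is the identification $\dim_k \Hom_{\mathcal{C}}(\unitobj, V_i \otimes V_j) = \Kdelta_{i,j^*}$ via duality and Schur's lemma, and I anticipate no genuine obstacle there.
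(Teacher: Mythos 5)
Your proof is correct and follows essentially the same route as the paper: well-definedness from semisimplicity, the computation $\dim_k \Hom_{\mathcal{C}}(\unitobj, V_i \otimes V_j) = \dim_k \Hom_{\mathcal{C}}(V_i^*, V_j) = \Kdelta_{i^*,j}$ via rigidity and Schur's lemma, and then symmetry and non-degeneracy read off from this formula. You simply spell out the dual-basis/Gram-matrix argument that the paper leaves implicit.
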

\begin{proof}
  Let $\tau$ denote the above linear map.
  The well-definedness of $\tau$ follows from the semisimplicity of $\mathcal{C}$.
  By the rigidity, we have
  \begin{equation*}
    \tau([V_i] \cdot [V_j])
    = \dim_k \Hom_{\mathcal{C}}(\unitobj, V_i \otimes V_j)
    = \dim_k \Hom_{\mathcal{C}}(V_i^*, V_j)
    = \Kdelta_{i^*, j}
  \end{equation*}
  for $i, j = 0, \dotsc, m$.
  Thus $\tau$ is non-degenerate.
  The symmetry of $\tau$ is obvious from the above computation.
\end{proof}

Recall from Example~\ref{ex:indep-irr-ch-fusion} that $\CF(\mathcal{C})$ is isomorphic to $\Gr_k(\mathcal{C})$ as an algebra.
Thus $\CF(\mathcal{C})$ is a symmetric Frobenius algebra with the trace $\tau$ given by
\begin{equation}
  \label{eq:fusion-cf-trace}
  \tau(\chi_i) = \Kdelta_{i,0} \quad (i = 0, \dotsc, m).
\end{equation}
By the above proof, $\tau(\chi_i \star \chi_j) = \Kdelta_{i, j^*}$. Thus $E := \sum_{i = 0}^m \chi_{i^*} \otimes \chi_i \in \CF(\mathcal{C})^{\otimes 2}$ is an element such that the linear map
\begin{equation}
  \label{eq:Fb-alg-iso}
  \Theta: \Hom_k(\CF(\mathcal{C}), k) \to \CF(\mathcal{C}),
  \quad \alpha \mapsto (\alpha \otimes \id)(E)
\end{equation}
is bijective. Let, in general, $\alpha: \CF(\mathcal{C}) \to k$ be an algebra map.
By the basic theory of Frobenius algebras, we see that $\lambda = \Theta(\alpha)$ satisfies
\begin{equation}
  \label{eq:Fb-alg-alpha-integ}
  f \star \lambda = \alpha(f) \, \lambda
\end{equation}
for all $f \in \CF(\mathcal{C})$, and an element $\lambda \in \CF(\mathcal{C})$ satisfying~\eqref{eq:Fb-alg-alpha-integ} is unique up to scalar multiple.
Moreover, the symmetricity of $\tau$ implies that $\Theta(\alpha) \in \CF(\mathcal{C})$ is central.

Now we consider the map $d: \CF(\mathcal{C}) \to k$ defined by $f \mapsto \langle f, u \rangle$.
It is easy to see that $d$ is an algebra map sending the internal character of $X \in \mathcal{C}$ to the pivotal dimension of $X$.
Applying the above argument to $\alpha = d$ gives the following description of cointegrals in $\mathcal{C}$.

\begin{lemma}
  \label{lem:fusion-coint}
  A class function $\lambda \in \CF(\mathcal{C})$ is a cointegral if and only if
  \begin{equation*}
    f \star \lambda = \langle f, u \rangle \, \lambda    
  \end{equation*}
  for all $f \in \CF(\mathcal{C})$. Thus, by the above argument, we see that
  \begin{equation*}
    \Theta(d) = \sum_{i = 0}^m \dim(V_i^*) \, \chi_i \in \CF(\mathcal{C})
  \end{equation*}
  is a cointegral in $\mathcal{C}$ and belongs to the center of $\CF(\mathcal{C})$.
\end{lemma}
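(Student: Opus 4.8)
The plan is to prove the displayed equivalence by a one-line computation for the ``only if'' part together with a dimension count for the ``if'' part; the description of $\Theta(d)$ then follows at once from the discussion preceding the lemma. I begin with ``only if'': if $\lambda \in \CF(\mathcal{C})$ is a cointegral, then $\bar{Z}(\lambda) \circ \delta_{\unitobj} = u \circ \lambda$ by~\eqref{eq:coint-def}, hence for every $f \in \CF(\mathcal{C})$,
\[
  f \star \lambda = f \circ \bar{Z}(\lambda) \circ \delta_{\unitobj} = f \circ u \circ \lambda = \langle f, u \rangle \, \lambda ,
\]
using the definition~\eqref{eq:class-ft-prod} of $\star$ and then the definition of the pairing $\langle\,,\,\rangle$ together with $\End_{\mathcal{C}}(\unitobj) = k$. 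Writing $W$ for the set of $\lambda \in \CF(\mathcal{C})$ with $f \star \lambda = \langle f, u\rangle\,\lambda$ for all $f \in \CF(\mathcal{C})$, this says that every cointegral lies in $W$.

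For the reverse inclusion I would argue by dimension. Since a fusion category is unimodular, \eqref{eq:Fb-tr-uniq} guarantees that a non-zero cointegral exists and is unique up to scalar, so the space of cointegrals is one-dimensional. On the other hand, $d\colon \CF(\mathcal{C}) \to k$, $f \mapsto \langle f, u\rangle$, is an algebra map, so the discussion around~\eqref{eq:Fb-alg-iso}--\eqref{eq:Fb-alg-alpha-integ} applied with $\alpha = d$ shows that $\Theta(d)$ is a non-zero element of $W$ and that every element of $W$ is a scalar multiple of it; thus $W$ is also one-dimensional. Combining this with the inclusion from the previous paragraph, the space of cointegrals coincides with $W$, which is exactly the claimed equivalence.

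The concluding assertions then follow immediately: $\Theta(d) \in W$, so $\Theta(d)$ is a cointegral, and it lies in the center of $\CF(\mathcal{C})$ because $\tau$ is symmetric, as already noted. The stated formula is obtained by evaluating $\Theta(d) = (d \otimes \id)(E)$ on $E = \sum_{i=0}^m \chi_{i^*} \otimes \chi_i$ and using $d(\chi_j) = d(\mathsf{ch}(V_j)) = \dim(V_j)$ together with $\dim(V_{i^*}) = \dim(V_i^*)$. The only step requiring any care is the reverse inclusion, which I prove via the two uniqueness statements above rather than by directly checking $\bar{Z}(\lambda)\circ\delta_{\unitobj} = u\circ\lambda$; this is precisely where the hypothesis that $\mathcal{C}$ is a fusion category --- hence unimodular, with $\CF(\mathcal{C})$ a symmetric Frobenius algebra --- is essential.
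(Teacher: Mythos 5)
Your proof is correct and follows essentially the same route as the paper: the forward direction is the same one-line computation from \eqref{eq:class-ft-prod} and \eqref{eq:coint-def}, and the converse is the same dimension/uniqueness argument identifying the one-dimensional eigenspace $I$ (your $W$) for $\alpha = d$ with the space of cointegrals, which is non-zero by unimodularity. The closing computation of $\Theta(d)$ and its centrality also matches the paper's discussion.
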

\begin{proof}
  Let $I$ be the space of class functions $\lambda$ satisfying~\eqref{eq:Fb-alg-alpha-integ} with $\alpha = d$.
  By the above argument, $I$ is one-dimensional.
  If $\lambda \in \CF(\mathcal{C})$ is a cointegral in $\mathcal{C}$, then, for all $f \in \CF(\mathcal{C})$, we have
  \begin{equation*}
    f \star \lambda
    = f \circ \bar{Z}(\lambda) \circ \Kdelta_{\unitobj}
    = f \circ u \circ \lambda
    = \alpha(f) \cdot \lambda
  \end{equation*}
  by~\eqref{eq:class-ft-prod} and~\eqref{eq:coint-def}.
  Thus the space $I'$ of cointegrals is a subspace of $I$.
  Since $I$ is one-dimensional, and since a non-zero cointegral in $\mathcal{C}$ exists, we have $I = I'$.
  The proof is done.
\end{proof}

\subsection{Properties of the Grothendieck algebra}

The dimension of $\mathcal{C}$ is given by
\begin{equation*}
  \dim(\mathcal{C}) = \sum_{i = 0}^m \dim(V_i^*) \dim(V_i) \in k.
\end{equation*}
It is known that $\dim(\mathcal{C}) \ne 0$ if $\mathrm{char}(k) = 0$ \cite{MR2183279}.
On the other hand, $\dim(\mathcal{C})$ happens to be zero in the case where $\mathrm{char}(k) > 0$.

\begin{definition}
   $\mathcal{C}$ is said to be {\em non-degenerate} if $\dim(\mathcal{C}) \ne 0$ in $k$.
\end{definition}

We now prove the following theorem:

\begin{theorem}
  \label{thm:Gro-alg-ss}
  For the pivotal fusion category $\mathcal{C}$, the following four conditions are equivalent:
  \begin{itemize}
  \item [(1)] $\Gr_k(\mathcal{C})$ is a semisimple algebra.
  \item [(2)] $\mathcal{C}$ is non-degenerate.
  \item [(3)] $\mathcal{Z}(\mathcal{C})$ is a semisimple abelian category.
  \item [(4)] $R(\unitobj) \in \mathcal{Z}(\mathcal{C})$ is a semisimple object.
  \end{itemize}
\end{theorem}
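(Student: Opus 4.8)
The plan is to establish the cycle of implications $(4)\Rightarrow(1)\Rightarrow(2)\Rightarrow(3)\Rightarrow(4)$. The backbone is the algebra isomorphism
$\Gr_k(\mathcal{C})\cong\CF(\mathcal{C})\cong\End_{\mathcal{Z}(\mathcal{C})}(R(\unitobj))$
recorded in Example~\ref{ex:indep-irr-ch-fusion} and Remark~\ref{rem:Ost-and-NY}: the first map is $\mathsf{ch}$, bijective because $\mathrm{Irr}(\mathcal{C})$ is a basis of $\CF(\mathcal{C})$ in the fusion case, and the second is the adjunction isomorphism of Theorem~\ref{thm:class-ft-induc}. With this in hand, $(3)\Rightarrow(4)$ is trivial, and $(4)\Rightarrow(1)$ follows from the elementary fact that the endomorphism algebra of a semisimple object of a finite abelian category is a finite product of matrix algebras over division rings, hence semisimple; one applies this to $M=R(\unitobj)$ and transports along the isomorphism.

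For $(1)\Rightarrow(2)$ I would argue by contraposition using the cointegral exhibited in Lemma~\ref{lem:fusion-coint}. Put $\lambda=\Theta(d)=\sum_{i=0}^{m}\dim(V_i^{*})\,\chi_i\in\CF(\mathcal{C})$. By that lemma $\lambda$ lies in the center of $\CF(\mathcal{C})$ and $f\star\lambda=\langle f,u\rangle\,\lambda$ for all $f\in\CF(\mathcal{C})$; taking $f=\lambda$ and using $\langle\chi_i,u\rangle=d(\mathsf{ch}(V_i))=\dim(V_i)$ yields $\lambda\star\lambda=\langle\lambda,u\rangle\,\lambda=\bigl(\sum_i\dim(V_i^{*})\dim(V_i)\bigr)\lambda=\dim(\mathcal{C})\,\lambda$. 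The coefficient of $\chi_0$ in $\lambda$ is $\dim(\unitobj^{*})=1$, so $\lambda\neq0$. Hence, if $\dim(\mathcal{C})=0$, then $\lambda$ is a nonzero central element with $\lambda\star\lambda=0$, so $k\lambda$ is a nonzero square-zero ideal of $\CF(\mathcal{C})$; therefore $\CF(\mathcal{C})\cong\Gr_k(\mathcal{C})$ is not semisimple. This proves $(1)\Rightarrow(2)$.

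The implication $(2)\Rightarrow(3)$ is the part I expect to be the main obstacle, and in positive characteristic it carries the real content; here I would invoke the known characterization of non-degenerate (equivalently, separable) fusion categories by semisimplicity of their Drinfeld center, valid over a field of arbitrary characteristic. For a more self-contained treatment one can instead argue with the canonical algebra $\mathbf{A}=R(\unitobj)$ in $\mathcal{Z}(\mathcal{C})$: equipping $\mathbf{A}$ with the Frobenius form given by a nonzero cointegral, one shows that $\dim(\mathcal{C})\neq0$ makes $\mathbf{A}$ a \emph{special} symmetric Frobenius algebra, i.e.\ makes the central element $m\circ\Delta$ of $\End_{\mathcal{Z}(\mathcal{C})}(\mathbf{A})$ invertible, hence $\mathbf{A}$ separable; since $\mathcal{C}$ is recovered as the category of $\mathbf{A}$-modules in $\mathcal{Z}(\mathcal{C})$ and $\mathcal{C}$ is semisimple, separability of $\mathbf{A}$ forces $\mathcal{Z}(\mathcal{C})$ to be semisimple. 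The delicate step in either approach is to pass from the non-vanishing of the single formal codegree $\dim(\mathcal{C})$ of $\Gr_k(\mathcal{C})$ to the non-vanishing of all of them, i.e.\ to the invertibility of $m\circ\Delta$; the prototype to keep in mind is the elementary implication $p\nmid|G|\Rightarrow p\nmid|C_G(g)|$, and a divisibility argument of this flavour, using the Frobenius pairing on $\End_{\mathcal{Z}(\mathcal{C})}(R(\unitobj))$, is what is required. Once $(2)\Rightarrow(3)$ is available, the cycle $(4)\Rightarrow(1)\Rightarrow(2)\Rightarrow(3)\Rightarrow(4)$ closes and all four conditions are equivalent.
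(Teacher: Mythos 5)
Your proposal is correct and follows essentially the same route as the paper: the same cycle $(1)\Rightarrow(2)\Rightarrow(3)\Rightarrow(4)\Rightarrow(1)$, with $(4)\Rightarrow(1)$ via the isomorphism $\Gr_k(\mathcal{C})\cong\End_{\mathcal{Z}(\mathcal{C})}(R(\unitobj))$, $(1)\Rightarrow(2)$ via the central cointegral $\lambda$ satisfying $\lambda\star\lambda=\dim(\mathcal{C})\,\lambda$, and $(2)\Leftrightarrow(3)$ cited from M\"uger and Brugui\`eres--Virelizier exactly as the paper does.
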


Note that M\"uger \cite{MR1966525} proved (2) $\Rightarrow$ (3). Brugui\`eres and Virelizier \cite{MR3079759} proved (2) $\Leftrightarrow$ (3) in a more general setting than ours.
Ostrik proved (1) $\Rightarrow$ (2) and conjectured that the converse holds in \cite{2015arXiv150301492O}.
Highlighting the role of cointegrals, we also give another proof of (1) $\Rightarrow$ (2).

\begin{proof}
  As we have remarked, (2) $\Leftrightarrow$ (3) has been known. (3) $\Rightarrow$ (4) is obvious.
  To show (4) $\Rightarrow$ (1), we recall from  Example~\ref{ex:indep-irr-ch-fusion} that there are isomorphisms
  \begin{equation}
    \label{eq:fusion-Gr-iso}
    \Gr_k(\mathcal{C}) \cong \CF(\mathcal{C}) \cong \End_{\mathcal{Z}(\mathcal{C})}(R(\unitobj))
  \end{equation}
  of algebras.
  Thus, if $R(\unitobj)$ is a semisimple object, then $\End_{\mathcal{Z}(\mathcal{C})}(R(\unitobj))$ is semisimple, and therefore so is $\Gr_k(\mathcal{C})$.

  Finally, we prove (1) $\Rightarrow$ (2).
  Suppose that (1) holds. Then $\CF(\mathcal{C})$ has no non-zero central nilpotent elements, since it is isomorphic to a finite product of matrix algebras.
  Now let $\lambda$ be the cointegral in $\mathcal{C}$ given by Lemma~\ref{lem:fusion-coint}. Since $\lambda$ is a non-zero central element, we have
  $\dim(\mathcal{C}) \, \lambda = \lambda \star \lambda \ne 0$.
  Hence $\dim(\mathcal{C}) \ne 0$.
\end{proof}

For a pair $(H, K)$ consisting of a semisimple Hopf algebra $H$ over an algebraically closed field of characteristic zero and its Hopf subalgebra $K$, Andruskiewitsch and Natale \cite{MR1812940} defined the {\em Hecke algebra} $\mathcal{H}(H, K)$. The pair $(H, K)$ is called a {\em Gelfand pair} if the algebra $\mathcal{H}(H, K)$ is commutative. As shown in \cite{MR1812940}, this is equivalent to that the left $H$-module $H \otimes_K \unitobj_K$ is multiplicity-free, where $\unitobj_K$ is the trivial $K$-module. They also showed that $(D(H), H)$ is a Gelfand pair if and only if the Grothendieck ring of $H\mbox{-{\sf mod}}$ is commutative, where $D(H)$ is the Drinfeld double of $H$. The following result can be thought of as a category-theoretical generalization of their result:

\begin{theorem}
  \label{thm:Gro-alg-comm}
  For a non-degenerate pivotal fusion category $\mathcal{C}$, the following assertions are equivalent:
  \begin{enumerate}
  \item $R(\unitobj) \in \mathcal{Z}(\mathcal{C})$ is multiplicity-free.
  \item The algebra $\Gr_k(\mathcal{C})$ is commutative.
  \end{enumerate}
\end{theorem}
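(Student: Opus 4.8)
The plan is to reduce the statement to a routine fact about endomorphism algebras in a semisimple $k$-linear category. First, since $\mathcal{C}$ is non-degenerate, Theorem~\ref{thm:Gro-alg-ss} tells us that $\mathcal{Z}(\mathcal{C})$ is a semisimple abelian category and, via~\eqref{eq:fusion-Gr-iso}, that there is an isomorphism $\Gr_k(\mathcal{C}) \cong \End_{\mathcal{Z}(\mathcal{C})}(R(\unitobj))$ of $k$-algebras. So it is enough to show that $\End_{\mathcal{Z}(\mathcal{C})}(R(\unitobj))$ is commutative if and only if $R(\unitobj)$ is multiplicity-free as an object of $\mathcal{Z}(\mathcal{C})$.

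To do this I would decompose $R(\unitobj) \cong \bigoplus_{\alpha} \mathbf{S}_{\alpha}^{\oplus m_{\alpha}}$ into pairwise non-isomorphic simple objects $\mathbf{S}_{\alpha}$ of $\mathcal{Z}(\mathcal{C})$ with multiplicities $m_{\alpha} \geq 1$ (possible because $\mathcal{Z}(\mathcal{C})$ is again a fusion category, hence semisimple with finitely many simple objects and finite-dimensional Hom spaces). Since $k$ is algebraically closed, Schur's lemma gives $\End_{\mathcal{Z}(\mathcal{C})}(\mathbf{S}_{\alpha}) \cong k$ and $\Hom_{\mathcal{Z}(\mathcal{C})}(\mathbf{S}_{\alpha}, \mathbf{S}_{\beta}) = 0$ for $\alpha \neq \beta$, so $\End_{\mathcal{Z}(\mathcal{C})}(R(\unitobj)) \cong \prod_{\alpha} \mathrm{Mat}_{m_{\alpha}}(k)$ as $k$-algebras. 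A finite product of matrix algebras over $k$ is commutative precisely when each factor is one-dimensional, i.e.\ when $m_{\alpha} = 1$ for all $\alpha$ --- which is exactly the condition that $R(\unitobj)$ be multiplicity-free. Transporting this equivalence across $\Gr_k(\mathcal{C}) \cong \End_{\mathcal{Z}(\mathcal{C})}(R(\unitobj))$ yields both implications of the theorem simultaneously.

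I do not expect a genuine obstacle here: the substantive input has already been isolated in Theorem~\ref{thm:Gro-alg-ss} and in the algebra isomorphism~\eqref{eq:fusion-Gr-iso}. The only point warranting a sentence of care is the passage $\End_{\mathcal{Z}(\mathcal{C})}(R(\unitobj)) \cong \prod_{\alpha} \mathrm{Mat}_{m_{\alpha}}(k)$, which uses both the semisimplicity of $\mathcal{Z}(\mathcal{C})$ (so that the decomposition into simple summands, not merely the list of composition factors, governs the endomorphism ring) and that $k$ is algebraically closed with finite-dimensional Hom spaces. I would also add a remark that the non-degeneracy hypothesis is indispensable: it is what supplies the semisimplicity of $\mathcal{Z}(\mathcal{C})$, without which the two conditions in the theorem are no longer linked.
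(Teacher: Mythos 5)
Your proposal is correct and follows essentially the same route as the paper: non-degeneracy gives semisimplicity of $R(\unitobj)$ via Theorem~\ref{thm:Gro-alg-ss}, multiplicity-freeness is then equivalent to commutativity of $\End_{\mathcal{Z}(\mathcal{C})}(R(\unitobj))$, and the conclusion is transported across the isomorphism \eqref{eq:fusion-Gr-iso}. You merely spell out the Artin--Wedderburn step $\End_{\mathcal{Z}(\mathcal{C})}(R(\unitobj)) \cong \prod_{\alpha} \mathrm{Mat}_{m_{\alpha}}(k)$ that the paper leaves implicit.
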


For the case of $\mathrm{char}(k) = 0$, this theorem has been proved by Ostrik \cite{2013arXiv1309.4822O}.

\begin{proof}
  Under the assumption, $R(\unitobj) \in \mathcal{Z}(\mathcal{C})$ is semisimple.
  Thus (1) is equivalent to that the endomorphism algebra of $R(\unitobj)$ is commutative.
  Now the result follows from \eqref{eq:fusion-Gr-iso} in the proof of the previous theorem.
\end{proof}

\subsection{The character table}

We now suppose that $\mathcal{C}$ is a non-degenerate pivotal fusion category such that $\Gr_k(\mathcal{C})$ is commutative (although some of below definitions make sense without such assumptions).
If this is the case, $R(\unitobj) \in \mathcal{Z}(\mathcal{C})$ is multiplicity-free by Theorem~\ref{thm:Gro-alg-comm}.

\begin{definition}
  \label{def:conj-class}
  A {\em conjugacy class} of $\mathcal{C}$ is a simple direct summand of $R(\unitobj) \in \mathcal{Z}(\mathcal{C})$. For a given conjugacy class $\mathfrak{C}$, we define its {\em size} by
  \begin{equation*}
    |\mathfrak{C}| = \dim(\mathfrak{C}) \in k.
  \end{equation*}
  The {\em class sum} $\overline{\mathfrak{C}} \in \CE(\mathcal{C})$ is defined as follows:
  Let $e$ be the idempotent on $R(\unitobj)$ whose image is $\mathfrak{C}$.
  Then $\overline{\mathfrak{C}}$ is the element corresponding to $e$ via
  \begin{equation*}
    \End_{\mathcal{Z}(\mathcal{C})}(R(\unitobj))
    \xrightarrow{\quad \text{Theorem~\ref{thm:class-ft-induc}} \quad}
    \CF(\mathcal{C})
    \xrightarrow{\quad \mathfrak{F}_{\lambda}^{-1} \quad}
    \CE(\mathcal{C}),
  \end{equation*}
  where $\lambda$ is the cointegral in $\mathcal{C}$ such that $\langle \lambda, u \rangle = 1$.
\end{definition}

By Lemma~\ref{lem:fusion-coint}, such a cointegral is given by
\begin{equation}
  \label{eq:fusion-coint-normalized}
  \lambda = \frac{1}{\dim(\mathcal{C})} \sum_{i = 0}^m \dim(V_i^*) \chi_i.
\end{equation}

By Theorems~\ref{thm:Gro-alg-ss} and~\ref{thm:Gro-alg-comm}, the number of conjugacy classes of $\mathcal{C}$ is equal to the number of isomorphism classes of simple objects of $\mathcal{C}$.
Thus let $\mathfrak{C}_0, \dotsc, \mathfrak{C}_m$ be the conjugacy classes of $\mathcal{C}$.
Since the unit object $\unitobj_{\mathcal{Z}(\mathcal{C})} \in \mathcal{Z}(\mathcal{C})$ is always a subobject of $R(\unitobj)$,
we can assume $\mathfrak{C}_0 = \unitobj_{\mathcal{Z}(\mathcal{C})}$.

We note that the size of $\mathfrak{C}_j$ is non-zero in $k$ since it is the pivotal dimension of a simple object in a pivotal fusion category $\mathcal{Z}(\mathcal{C})$. Thus we set
\begin{equation}
  \label{eq:conj-class-aver}
  g_r := |\mathfrak{C}_r|^{-1} \cdot \overline{\mathfrak{C}}_r \in \CE(\mathcal{C})
\end{equation}
for $r = 0, \dotsc, m$. The element $g_r$ can be thought of as the ``average'' of the elements in the conjugacy classes.
Thus we define:

\begin{definition}
  \label{def:ct}
  We call the matrix $(\langle \chi_i, g_r \rangle)_{i, r = 0, \dotsc, m}$ the {\em character table} of $\mathcal{C}$.
\end{definition}

For $r = 0, \dotsc, m$, let $\widetilde{f}_r \in \End_{\mathcal{Z}(\mathcal{C})} (R(\unitobj))$ denote the idempotent whose image is $\mathfrak{C}_r$, and let $f_r \in \CF(\mathcal{C})$ be the corresponding class function.
The set $\{ f_r \}_{r = 0, \dotsc, m}$ is a complete set of primitive orthogonal idempotents of $\CF(\mathcal{C})$.
$f_r$'s are expressed by irreducible characters as follows:

\begin{theorem}
  \label{thm;ct-idempo}
  $\displaystyle f_r = \frac{|\mathfrak{C}_r|}{\dim(\mathcal{C})} \sum_{i = 0}^m \, \langle \chi_{i^*}, g_r \rangle \, \chi_i$ for all $r = 0, \dotsc, m$.
\end{theorem}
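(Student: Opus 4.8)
The plan is to express the idempotent $f_r$ in terms of the basis $\{\chi_i\}$ of $\CF(\mathcal{C})$ by computing the pairing $\langle f_r, e_j\rangle$ (using the basis $\{e_j\}$ of $\CE(\mathcal{C})$ dual to $\{\chi_i\}$ up to the factor $\dim(V_j)$, see~\eqref{eq:fusion-cf-ce-paring}), and then to rewrite the answer in terms of the $g_r$'s via the defining relation of the class sums. The key observation is that $\mathfrak{F}_\lambda$ (equivalently $\mathfrak{F}'_\lambda$) intertwines the relevant module structures, so that the idempotent $f_r\in\CF(\mathcal{C})$ and the class sum $\overline{\mathfrak{C}}_r\in\CE(\mathcal{C})$ correspond to each other under the Fourier transform, up to the explicit normalization built into Definition~\ref{def:conj-class}.

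First I would unwind Definition~\ref{def:conj-class}: by construction $\overline{\mathfrak{C}}_r = \mathfrak{F}_\lambda^{-1}(f_r)$, where $\lambda$ is the normalized cointegral~\eqref{eq:fusion-coint-normalized} (which satisfies $\langle\lambda,u\rangle=1$). Hence $f_r = \mathfrak{F}_\lambda(\overline{\mathfrak{C}}_r)$. Using~\eqref{eq:Fourier-tr}, $\mathfrak{F}_\lambda(\overline{\mathfrak{C}}_r) = \lambda\leftharpoonup\mathfrak{S}(\overline{\mathfrak{C}}_r)$, so I must understand $\mathfrak{S}$ on $\overline{\mathfrak{C}}_r$. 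Since $\mathfrak{S}$ is an algebra automorphism of $\CE(\mathcal{C})$ (it is the transport of $(-)^!$ on $\End(\id_{\mathcal{C}})$, which reverses vertical composition but here everything is commutative) permuting the primitive idempotents $e_j$ according to $e_j\mapsto e_{j^*}$ by~\eqref{eq:fusion-cent-elem}, it follows that $\mathfrak{S}$ fixes the set of class sums and in fact there is a well-behaved involution $r\mapsto r^*$ on conjugacy classes with $\mathfrak{S}(\overline{\mathfrak{C}}_r)=\overline{\mathfrak{C}}_{r^*}$; however, for the purpose of pairing against $\chi_i$ it is cleaner to keep $\mathfrak{S}$ explicit and instead compute $\langle \lambda\leftharpoonup \mathfrak{S}(\overline{\mathfrak{C}}_r),\, e_j\rangle$ directly. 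By the definition of $\leftharpoonup$ and the fact that $\overline{\mathfrak{C}}_r$ is central, $\langle f_r, e_j\rangle = \langle\lambda, \mathfrak{S}(\overline{\mathfrak{C}}_r)\cdot e_j\rangle$; expanding $\lambda$ via~\eqref{eq:fusion-coint-normalized} and using the orthogonality~\eqref{eq:fusion-cf-ce-paring} together with $\langle\chi_i, \mathfrak{S}(a)\cdot e_j\rangle$ reducing to a multiple of $\langle\chi_i, e_j\rangle$ via the Schur-lemma characters $\xi_{V_i}$ from the proof of Theorem~\ref{thm:unimo-gr}, I can pin down the coefficients.

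Concretely, the step that does the real work is the following: writing $f_r=\sum_i c_i^{(r)}\chi_i$ and pairing both sides with $g_j = |\mathfrak{C}_j|^{-1}\overline{\mathfrak{C}}_j$, I get $c_i^{(r)}\dim(V_i)$ in terms of $\langle f_r, g_j\rangle$ once I know the inverse of the matrix $(\langle\chi_i,g_r\rangle)$. The orthogonality relations for the character table (which the paper is evidently heading toward, being the "main result" analogue of finite group theory) give exactly this inverse: the dual basis to $\{g_r\}$ under $\langle\,,\,\rangle$ is, up to the scalar $|\mathfrak{C}_r|/\dim(\mathcal{C})$ and the duality $i\mapsto i^*$, the set $\{\chi_{i^*}\}$. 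Substituting yields $f_r = \frac{|\mathfrak{C}_r|}{\dim(\mathcal{C})}\sum_i \langle\chi_{i^*},g_r\rangle\,\chi_i$. The cleanest route is probably to first establish the needed orthogonality relation as a lemma — that $\sum_i \langle\chi_{i^*}, g_r\rangle\langle\chi_i, g_s\rangle$ equals a suitable multiple of $\Kdelta_{r,s}$ — which itself follows from the fact that the $\widetilde f_r$ are orthogonal idempotents in $\End_{\mathcal{Z}(\mathcal{C})}(R(\unitobj))$ together with the Fourier-transform dictionary.

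The main obstacle I anticipate is bookkeeping of the various normalizations: the factor $\dim(V_j)$ in~\eqref{eq:fusion-cf-ce-paring}, the $1/\dim(\mathcal{C})$ in~\eqref{eq:fusion-coint-normalized}, the $|\mathfrak{C}_r|^{-1}$ in~\eqref{eq:conj-class-aver}, and the interplay of $\mathfrak{S}$ with the involution $i\mapsto i^*$ on simples versus $r\mapsto r^*$ on conjugacy classes. In particular one must check that $\mathfrak{S}$ acting on $\CE(\mathcal{C})$ is compatible with $\mathfrak{F}_\lambda$ in the precise way that makes the $\chi_{i^*}$ (rather than $\chi_i$) appear, and that the cointegral $\lambda$ is $\mathfrak{S}$-invariant up to the dimension twist so that no extra factor sneaks in. Once these constants are matched — most safely by evaluating both sides of the claimed identity on the basis $\{g_s\}_{s=0}^m$ of $\CE(\mathcal{C})$ and invoking the orthogonality relation — the theorem follows. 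I do not expect any conceptual difficulty beyond this.
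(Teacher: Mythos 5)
Your second paragraph contains a complete and correct argument, and it is essentially the paper's proof: unwind Definition~\ref{def:conj-class} to get $f_r=\mathfrak{F}_{\lambda}(\overline{\mathfrak{C}}_r)$, compute the Fourier transform explicitly on the bases $\{e_i\}$ and $\{\chi_i\}$ using $\mathfrak{S}(e_i)=e_{i^*}$, the idempotent relations \eqref{eq:fusion-cent-elem} and the pairing \eqref{eq:fusion-cf-ce-paring}, and read off the coefficients. (The paper does exactly this, obtaining $\mathfrak{F}_{\lambda}(e_i)=\dim(V_i)\dim(\mathcal{C})^{-1}\chi_{i^*}$ and hence $\mathfrak{F}_{\lambda}^{-1}(\chi_i)=\dim(\mathcal{C})\dim(V_i)^{-1}e_{i^*}$, then pairs $\chi_{i^*}$ against $\mathfrak{F}_{\lambda}^{-1}(f_r)=\overline{\mathfrak{C}}_r=|\mathfrak{C}_r|\,g_r$.) If you carry that computation through, no orthogonality relation is needed anywhere.

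The problem is that you then declare a different route to be ``the step that does the real work'' and ``the cleanest route,'' namely inverting the matrix $(\langle\chi_i,g_r\rangle)$ via the orthogonality relations, and you propose to establish those relations first as a lemma from the fact that the $\widetilde{f}_r$ are orthogonal idempotents ``together with the Fourier-transform dictionary.'' This is circular. In the paper the orthogonality relations (Corollary~\ref{cor:ct-ortho}) are \emph{consequences} of Theorem~\ref{thm;ct-idempo}: one substitutes the expansion $f_r=\frac{|\mathfrak{C}_r|}{\dim(\mathcal{C})}\sum_i\langle\chi_{i^*},g_r\rangle\chi_i$ into $f_r\star f_s=\Kdelta_{r,s}f_s$ and applies the trace $\tau$ of \eqref{eq:fusion-cf-trace}. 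Without already knowing how $f_r$ expands in the $\chi_i$ (or, equivalently, how $\chi_i$ expands in the $f_r$), the abstract statement that the $\widetilde{f}_r$ are orthogonal idempotents in $\End_{\mathcal{Z}(\mathcal{C})}(R(\unitobj))$ carries no numerical information about the entries $\langle\chi_i,g_r\rangle$, so it cannot by itself yield $\sum_i\langle\chi_{i^*},g_r\rangle\langle\chi_i,g_s\rangle\propto\Kdelta_{r,s}$. The same objection applies to your final suggestion of ``evaluating both sides on the basis $\{g_s\}$ and invoking the orthogonality relation.'' So: drop the third and fourth paragraphs, finish the direct computation of the second paragraph, and you have the paper's proof.
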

\begin{proof}
  We represent $\mathfrak{F}_{\lambda}$ and its inverse with respect to bases $\{ e_i \}$ and $\{ \chi_i \}$.
  By~\eqref{eq:fusion-cent-elem}, \eqref{eq:fusion-Gr-iso} and the definition of the Fourier transform, we have
  \begin{equation*}
    \langle \mathfrak{F}_{\lambda}(e_i), e_j \rangle
    = \frac{1}{\dim(\mathcal{C})} \sum_{\ell = 0}^m \dim(V_{\ell}^*) \langle \chi_{\ell}, \mathfrak{S}(e_i) e_j \rangle
    = \frac{\dim(V_{i}) \dim(V_i^*)}{\dim(\mathcal{C})} \Kdelta_{i^*, j}
  \end{equation*}
  for all $i, j = 0, \dotsc, m$. Thus, by~\eqref{eq:fusion-cf-ce-paring},
  \begin{equation}
    \label{eq:fusion-fourier}
    \mathfrak{F}_{\lambda}(e_i) = \frac{\dim(V_i)}{\dim(\mathcal{C})} \, \chi_{i^*}
    \quad \text{and} \quad
    \mathfrak{F}_{\lambda}^{-1}(\chi_i) = \frac{\dim(\mathcal{C})}{\dim(V_i)} \, e_{i^*}
    \quad (i = 0, \dotsc, m).
  \end{equation}
  Now we write $f_r = \sum_{i = 0}^m a_{i r} \chi_i$ ($a_{i r} \in k$). Then, by~\eqref{eq:fusion-cf-ce-paring} and \eqref{eq:fusion-fourier},
  \begin{equation*}
    \frac{|\mathfrak{C}_r|}{\dim(\mathcal{C})} \langle \chi_{i^*}, g_r \rangle
    = \frac{1}{\dim(\mathcal{C})} \langle \chi_{i^*}, \mathfrak{F}^{-1}_{\lambda}(f_r) \rangle
    = \sum_{j = 0}^m \frac{a_{j r}}{\dim(V_i)} \langle \chi_{i^*}, e_{j^*} \rangle
    = a_{i r}. \qedhere
  \end{equation*}
\end{proof}

The orthogonality relations can be derived from this theorem.
As a preparation, we show that every entry of the $0$-th row of the character table is $1$.

\begin{lemma}
  \label{lem:ct-0-th-row}
  $\langle \chi_0, g_r \rangle = 1$ for $r = 0, \dotsc, m$.
\end{lemma}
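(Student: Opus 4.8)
The plan is to express $\langle\chi_0,g_r\rangle$ in terms of the trace formula of Proposition~\ref{prop:trace-formula}, which is the only non-tautological input available. First recall that $\chi_0=\mathsf{ch}(V_0)=\mathsf{ch}(\unitobj)=\varepsilon_{\unitobj}$ by~\eqref{eq:character-unit-obj}, that $g_r=|\mathfrak{C}_r|^{-1}\,\overline{\mathfrak{C}}_r$, and that by Definition~\ref{def:conj-class} we have $\overline{\mathfrak{C}}_r=\mathfrak{F}_{\lambda}^{-1}(f_r)$, where $\lambda$ is the cointegral with $\langle\lambda,u\rangle=1$ and $f_r\in\CF(\mathcal{C})$ is the class function corresponding under Theorem~\ref{thm:class-ft-induc} to the idempotent $\widetilde{f}_r\in\End_{\mathcal{Z}(\mathcal{C})}(R(\unitobj))$ with image $\mathfrak{C}_r$. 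So it is enough to show $\langle\varepsilon_{\unitobj},\mathfrak{F}_{\lambda}^{-1}(f_r)\rangle=|\mathfrak{C}_r|$.

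The first step would be to reduce $\langle\varepsilon_{\unitobj},\mathfrak{F}_{\lambda}^{-1}(f_r)\rangle$ to $\langle f_r,\Lambda\rangle$, where $\Lambda=(\id_A\otimes\varepsilon_{\unitobj})\circ d_{\lambda}$ is the integral of Proposition~\ref{prop:integ-non-degen}. By~\eqref{eq:tw-Fourier-tr} we have $\mathfrak{F}_{\lambda}^{-1}=\mathfrak{S}^{-1}\circ(\mathfrak{F}'_{\lambda})^{-1}$, and~\eqref{eq:tw-Fourier-tr-inv} gives $(\mathfrak{F}'_{\lambda})^{-1}(f_r)=(f_r\otimes\id_A)\circ d_{\lambda}$, so $\mathfrak{F}_{\lambda}^{-1}(f_r)=\mathfrak{S}^{-1}\bigl((f_r\otimes\id_A)\circ d_{\lambda}\bigr)$. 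The key observation is that the antipodal operator is invisible when paired with $\varepsilon_{\unitobj}$: under~\eqref{eq:CE-and-End-id} one has $\langle\varepsilon_{\unitobj},b\rangle\,\id_{\unitobj}=\varepsilon_{\unitobj}\circ b=\rho_{\unitobj}\circ(b\otimes\id_{\unitobj})=\psi(b)_{\unitobj}$ (using $\rho_{\unitobj}=\varepsilon_{\unitobj}$), and since $\psi(\mathfrak{S}(b))=\psi(b)^{!}$ with $\xi^{!}_{\unitobj}=(\xi_{\unitobj})^{*}=\xi_{\unitobj}$ for every $\xi\in\End(\id_{\mathcal{C}})$ (as $\unitobj^{*}=\unitobj$ and the transpose of a scalar endomorphism is that scalar), we get $\langle\varepsilon_{\unitobj},\mathfrak{S}^{\pm1}(b)\rangle=\langle\varepsilon_{\unitobj},b\rangle$. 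Therefore $\langle\varepsilon_{\unitobj},\mathfrak{F}_{\lambda}^{-1}(f_r)\rangle\,\id_{\unitobj}=\varepsilon_{\unitobj}\circ(f_r\otimes\id_A)\circ d_{\lambda}=f_r\circ(\id_A\otimes\varepsilon_{\unitobj})\circ d_{\lambda}=f_r\circ\Lambda$, i.e. $\langle\varepsilon_{\unitobj},\mathfrak{F}_{\lambda}^{-1}(f_r)\rangle=\langle f_r,\Lambda\rangle$.

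The second step is to apply Proposition~\ref{prop:trace-formula}: since $\langle\lambda,u\rangle=1$, it gives $\langle f_r,\Lambda\rangle=\trace(\widetilde{f}_r)$, where $\widetilde{f}_r=\bar{Z}(f_r)\circ\delta_{\unitobj}$ is the morphism attached to $f_r$ by Theorem~\ref{thm:class-ft-induc}. Now $\widetilde{f}_r$ is an idempotent endomorphism of $R(\unitobj)$ whose image is the simple summand $\mathfrak{C}_r$; writing $\widetilde{f}_r=\iota\circ p$ with $p\circ\iota=\id_{\mathfrak{C}_r}$ and invoking cyclicity of the pivotal trace (recall that $\trace(\xi)$ for $\xi\colon A\to A$ is the pivotal trace of $\xi$ in $\mathcal{C}$, via $\psi_{\mathbf{A}}=j_A$), one gets $\trace(\widetilde{f}_r)=\trace(p\circ\iota)=\trace(\id_{\mathfrak{C}_r})=\dim(\mathfrak{C}_r)=|\mathfrak{C}_r|$, where $\dim$ computed in $\mathcal{Z}(\mathcal{C})$ agrees with the pivotal dimension of the underlying object of $\mathcal{C}$ because the forgetful functor is pivotal. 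Combining the two steps yields $\langle\chi_0,g_r\rangle=|\mathfrak{C}_r|^{-1}\langle f_r,\Lambda\rangle=|\mathfrak{C}_r|^{-1}\cdot|\mathfrak{C}_r|=1$.

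I expect the first step to be the main obstacle: one must carefully untangle the definition of the class sum through the twisted Fourier transform and verify that $\mathfrak{S}$ drops out against $\chi_0=\varepsilon_{\unitobj}$. The tempting shortcut of expanding $\mathfrak{F}_{\lambda}^{-1}(f_r)$ in the basis $\{e_i\}$ via~\eqref{eq:fusion-fourier} and substituting the formula of Theorem~\ref{thm;ct-idempo} only reproduces the tautology $\langle\chi_0,g_r\rangle=\langle\chi_0,g_r\rangle$, so the genuinely new ingredient has to be the trace formula, i.e. the identification $\langle f_r,\Lambda\rangle=\trace(\widetilde{f}_r)=|\mathfrak{C}_r|$.
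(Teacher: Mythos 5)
Your proposal is correct and follows essentially the same route as the paper: both use the observation that $\mathfrak{S}^{-1}$ is invisible against $\chi_0=\varepsilon_{\unitobj}$, reduce $\langle\varepsilon_{\unitobj},\mathfrak{F}_{\lambda}^{-1}(f_r)\rangle$ to $\langle f_r,\Lambda\rangle$ via \eqref{eq:tw-Fourier-tr-inv} and Proposition~\ref{prop:integ-non-degen}, and then apply the trace formula of Proposition~\ref{prop:trace-formula} together with $\trace(\widetilde{f}_r)=|\mathfrak{C}_r|$. You merely supply more detail than the paper on the two points it leaves to the reader (why $\mathfrak{S}$ drops out, and why the trace of the idempotent equals $\dim(\mathfrak{C}_r)$), and those details are sound.
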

\begin{proof}
  For $a \in \CE(\mathcal{C})$, one easily sees that $\langle \chi_0, \mathfrak{S}^{-1}(a) \rangle = \langle \chi_0, a \rangle$.
  By~\eqref{eq:tw-Fourier-tr},  \eqref{eq:tw-Fourier-tr-inv} and Proposition~\ref{prop:integ-non-degen}, we have
  \begin{equation*}
    \langle \varepsilon_{\unitobj}, \mathfrak{F}_{\lambda}^{-1}(f_r) \rangle
    = \langle \varepsilon_{\unitobj}, (f_r \otimes \id_A) \circ d_{\lambda} \rangle
    = (f_r \otimes \varepsilon_{\unitobj}) \circ d_{\lambda}
    = \langle f_r, \Lambda \rangle
    = \trace(\widetilde{f}_r) = |\mathfrak{C}_r|.
  \end{equation*}
  Thus, $\langle \chi_0, g_r \rangle = |\mathfrak{C}_r|^{-1} \langle \varepsilon_{\unitobj}, \mathfrak{F}_{\lambda}^{-1}(f_r) \rangle = 1$.
\end{proof}

\begin{corollary}
  \label{cor:ct-ortho}
  We have the first orthogonality relation
  \begin{equation}
    \label{eq:ct-1st-ortho}
    \sum_{r = 0}^m \frac{|\mathfrak{C}_r|}{\dim(\mathcal{C})}
    \, \langle \chi_i, g_r \rangle \, \langle \chi_{j^*}, g_r \rangle = \Kdelta_{i, j}
    \quad (i, j = 0, \dotsc, m)
  \end{equation}
  and the second orthogonality relation
  \begin{equation}
    \label{eq:ct-2nd-ortho}
    \sum_{i = 0}^{m} \frac{|\mathfrak{C}_r|}{\dim(\mathcal{C})}
    \, \langle \chi_{i}, g_r \rangle \, \langle \chi_{i^*}, g_s \rangle = \Kdelta_{r,s}
    \quad (r, s = 0, \dotsc, m).
  \end{equation}
  For $i = 0, \dotsc, m$, we have
  \begin{equation}
    \label{eq:ct-chg-basis-2}
    \chi_i = \sum_{r = 0}^m \langle \chi_{i}, g_r \rangle \, f_r.
  \end{equation}
\end{corollary}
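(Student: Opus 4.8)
The plan is to deduce the whole corollary from Theorem~\ref{thm;ct-idempo} and the symmetric Frobenius structure on $\CF(\mathcal{C})$: first I would prove the change-of-basis formula \eqref{eq:ct-chg-basis-2}, and then read off the two orthogonality relations from the fact that \eqref{eq:ct-chg-basis-2} and Theorem~\ref{thm;ct-idempo} describe mutually inverse changes of basis between $\{\chi_i\}$ and $\{f_r\}$. First I would record that, since $\mathcal{C}$ is non-degenerate and $\Gr_k(\mathcal{C})$ is commutative, the algebra $\CF(\mathcal{C})\cong\Gr_k(\mathcal{C})$ is commutative semisimple of dimension $m+1$; hence the complete set $\{f_0,\dots,f_m\}$ of primitive orthogonal idempotents is a basis of $\CF(\mathcal{C})$, with $f_r\star f_s=\Kdelta_{r,s}f_r$ and $\sum_r f_r=\chi_0$, where $\chi_0=\mathsf{ch}(\unitobj)$ is the unit of $\CF(\mathcal{C})$. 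I also write $\tau$ for the symmetric Frobenius trace on $\CF(\mathcal{C})\cong\Gr_k(\mathcal{C})$, normalized by $\tau(\chi_j)=\Kdelta_{j,0}$ as in \eqref{eq:fusion-cf-trace}; recall that $\tau(\chi_j\star\chi_\ell)=\Kdelta_{j,\ell^*}$.

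To prove \eqref{eq:ct-chg-basis-2}, write $\chi_i=\sum_r c_{i,r}f_r$ with unique scalars $c_{i,r}\in k$, so that $\chi_i\star f_r=c_{i,r}f_r$. Substituting the expression for $f_r$ from Theorem~\ref{thm;ct-idempo} and applying $\tau$ gives, by Lemma~\ref{lem:ct-0-th-row}, $\tau(f_r)=\tfrac{|\mathfrak{C}_r|}{\dim(\mathcal{C})}\sum_j\langle\chi_{j^*},g_r\rangle\,\tau(\chi_j)=\tfrac{|\mathfrak{C}_r|}{\dim(\mathcal{C})}\langle\chi_0,g_r\rangle=\tfrac{|\mathfrak{C}_r|}{\dim(\mathcal{C})}$, and likewise $\tau(\chi_i\star f_r)=\tfrac{|\mathfrak{C}_r|}{\dim(\mathcal{C})}\sum_j\langle\chi_{j^*},g_r\rangle\,\Kdelta_{i,j^*}=\tfrac{|\mathfrak{C}_r|}{\dim(\mathcal{C})}\langle\chi_i,g_r\rangle$ (the surviving term being $j=i^*$, so that $\langle\chi_{j^*},g_r\rangle=\langle\chi_i,g_r\rangle$). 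Since $\tau(\chi_i\star f_r)=c_{i,r}\tau(f_r)$ and $\tfrac{|\mathfrak{C}_r|}{\dim(\mathcal{C})}\neq 0$ (recall $|\mathfrak{C}_r|\neq 0$ and $\dim(\mathcal{C})\neq 0$), cancelling yields $c_{i,r}=\langle\chi_i,g_r\rangle$, which is \eqref{eq:ct-chg-basis-2}.

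With \eqref{eq:ct-chg-basis-2} in hand, set $M=(\langle\chi_i,g_r\rangle)_{i,r=0}^m$ and $N=\big(\tfrac{|\mathfrak{C}_r|}{\dim(\mathcal{C})}\langle\chi_{i^*},g_r\rangle\big)_{r,i=0}^m$; by \eqref{eq:ct-chg-basis-2} and Theorem~\ref{thm;ct-idempo}, $M$ is the transition matrix expressing the basis $\{\chi_i\}$ in terms of $\{f_r\}$ and $N$ the one expressing $\{f_r\}$ in terms of $\{\chi_i\}$, so $MN=I$ and $NM=I$. Unravelling $MN=I$ gives \eqref{eq:ct-1st-ortho} verbatim, while $NM=I$ gives $\sum_i\tfrac{|\mathfrak{C}_r|}{\dim(\mathcal{C})}\langle\chi_{i^*},g_r\rangle\langle\chi_i,g_s\rangle=\Kdelta_{r,s}$, and relabelling the summation index $i\mapsto i^*$ (an involution of $\{0,\dots,m\}$) turns this into \eqref{eq:ct-2nd-ortho}; alternatively \eqref{eq:ct-1st-ortho} follows directly by expanding $\chi_i\star\chi_{j^*}$ via \eqref{eq:ct-chg-basis-2}, using $f_r\star f_s=\Kdelta_{r,s}f_r$, and applying $\tau$ together with $\tau(\chi_i\star\chi_{j^*})=\Kdelta_{i,j}$. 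The only step requiring genuine care is the second paragraph --- the bookkeeping with the duality involution $i\mapsto i^*$ and the normalizations $|\mathfrak{C}_r|$, $\dim(\mathcal{C})$; once \eqref{eq:ct-chg-basis-2} is established, the orthogonality relations are a purely formal consequence, using no categorical input beyond Theorem~\ref{thm;ct-idempo}, Lemma~\ref{lem:ct-0-th-row} and the Frobenius structure on $\CF(\mathcal{C})$ already established.
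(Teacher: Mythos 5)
Your proof is correct; it uses exactly the same ingredients as the paper (Theorem~\ref{thm;ct-idempo}, Lemma~\ref{lem:ct-0-th-row}, the symmetric Frobenius trace $\tau$ with $\tau(\chi_i\star\chi_j)=\Kdelta_{i,j^*}$, and the fact that a one-sided matrix inverse is two-sided), but organizes them in the reverse order. The paper first establishes the second orthogonality relation \eqref{eq:ct-2nd-ortho} by expanding $f_r\star f_s$ via Theorem~\ref{thm;ct-idempo} and applying $\tau$, then deduces \eqref{eq:ct-1st-ortho} from the transposed-inverse argument, and only at the end obtains \eqref{eq:ct-chg-basis-2} as a consequence of the first orthogonality relation. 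You instead prove \eqref{eq:ct-chg-basis-2} first, by pairing $\chi_i\star f_r=c_{i,r}f_r$ against $\tau$ and cancelling $\tau(f_r)=|\mathfrak{C}_r|/\dim(\mathcal{C})\neq 0$, and then read off both orthogonality relations as the statement that the two transition matrices between the bases $\{\chi_i\}$ and $\{f_r\}$ are mutually inverse. Your route arguably makes the logical dependence cleaner (the change-of-basis formula is the primary fact, the orthogonality relations are its matrix shadow), at the cost of having to verify up front that $\{f_r\}$ is a basis and that $\tau(f_r)$ is invertible; the paper's route gets \eqref{eq:ct-2nd-ortho} slightly more directly from the idempotent relation $f_r\star f_s=\Kdelta_{r,s}f_r$. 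All the bookkeeping with the involution $i\mapsto i^*$ and the normalizations in your argument checks out.
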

\begin{proof}
  We first prove the second orthogonality relation. By Theorem~\ref{thm;ct-idempo},
  \begin{equation*}
    \frac{\dim(\mathcal{C})}{|\mathfrak{C}_s|} \Kdelta_{r,s} f_s
    = \frac{\dim(\mathcal{C})}{|\mathfrak{C}_s|} f_r \star f_s
    = \frac{|\mathfrak{C}_r|}{\dim(\mathcal{C})} \sum_{i, j = 0}^m
    \, \langle \chi_{i}, g_r \rangle \, \langle \chi_{j}, g_s \rangle \, \chi_i \star \chi_j
  \end{equation*}
  for all $r, s = 0, \dotsc, m$. Applying~\eqref{eq:fusion-cf-trace} to both sides, we obtain
  \begin{equation*}
    \Kdelta_{r,s}  \langle \chi_0, g_r \rangle
    = \frac{|\mathfrak{C}_r|}{\dim(\mathcal{C})} \sum_{i = 0}^m \langle \chi_{i}, g_r \rangle \, \langle \chi_{i^*}, g_s \rangle.
  \end{equation*}
  Now~\eqref{eq:ct-2nd-ortho} follows from Lemma~\ref{lem:ct-0-th-row}. To prove \eqref{eq:ct-1st-ortho}, we consider the matrices $T$ and $T'$ whose $(i, j)$-entries are given by
  \begin{equation*}
    T_{i j} = \langle \chi_{i^*}, g_j \rangle
    \quad \text{and} \quad
    T'_{i j} = \frac{|\mathfrak{C}_i|}{\dim(\mathcal{C})} \langle \chi_j, g_i \rangle
    \quad (i, j = 0, \dotsc, m),
  \end{equation*}
  respectively. Equation \eqref{eq:ct-2nd-ortho} means $T' \cdot T = 1$.
  Hence, by linear algebra, we also have $T \cdot T' = 1$. This implies \eqref{eq:ct-1st-ortho}. Equation \eqref{eq:ct-chg-basis-2} is easily verified by using Theorem~\ref{thm;ct-idempo} and the first orthogonality relation.
\end{proof}

Now we suppose that $\mathrm{char}(k) = 0$.

\begin{theorem}
  Every entry of the character table of $\mathcal{C}$ is a cyclotomic integer.
\end{theorem}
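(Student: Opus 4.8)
The plan is to identify each character-table entry $\langle\chi_i,g_r\rangle$ with an eigenvalue of the fusion matrix $N_i$ of $\mathcal{C}$, which yields algebraic integrality at once, and then to exhibit a single cyclotomic field containing all of them by passing to the modular center. For the first part I would read off from \eqref{eq:ct-chg-basis-2} that $\chi_i=\sum_{r=0}^m\langle\chi_i,g_r\rangle f_r$, where $\{f_0,\dots,f_m\}$ is a complete set of primitive orthogonal idempotents of $\CF(\mathcal{C})$. Since $\Gr_k(\mathcal{C})$ is commutative by hypothesis and semisimple by Theorem~\ref{thm:Gro-alg-ss} (non-degeneracy), the isomorphic algebra $\CF(\mathcal{C})$ is split semisimple over $k$, so $\CF(\mathcal{C})=\bigoplus_r k f_r$ and the operator $\chi_i\star(-)$ has $f_r$ as an eigenvector with eigenvalue $\langle\chi_i,g_r\rangle$; these eigenvalues exhaust its spectrum. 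Transporting along the algebra isomorphism $\mathsf{ch}\colon\Gr_k(\mathcal{C})\xrightarrow{\ \sim\ }\CF(\mathcal{C})$, $[V_i]\mapsto\chi_i$ (Example~\ref{ex:indep-irr-ch-fusion}), the operator $\chi_i\star(-)$ becomes multiplication by $[V_i]$ on $\Gr_k(\mathcal{C})=k\otimes_{\mathbb{Z}}\Gr(\mathcal{C})$, i.e.\ $N_i\otimes\id_k$ with $N_i\in M_{m+1}(\mathbb{Z}_{\ge0})$ the matrix of fusion multiplicities $[V_i\otimes V_j:V_k]$. Hence each $\langle\chi_i,g_r\rangle$ is a root in $k$ of the monic polynomial $\det(t\,\id-N_i)\in\mathbb{Z}[t]$; as $\mathrm{char}(k)=0$, fixing an embedding $\overline{\mathbb{Q}}\hookrightarrow k$ shows $\langle\chi_i,g_r\rangle$ is an algebraic integer.

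It then remains to place all the $\langle\chi_i,g_r\rangle$ in one cyclotomic field, and here I would argue through $\mathcal{Z}(\mathcal{C})$. The pivotal structure of $\mathcal{C}$ induces one on $\mathcal{Z}(\mathcal{C})$, and since $\mathcal{C}$ is non-degenerate, $\mathcal{Z}(\mathcal{C})$ is semisimple (Theorem~\ref{thm:Gro-alg-ss}), non-degenerate braided, with $\dim\mathcal{Z}(\mathcal{C})=\dim(\mathcal{C})^2\ne0$, hence a modular tensor category. The conjugacy classes $\mathfrak{C}_0,\dots,\mathfrak{C}_m$ are exactly the simple constituents of $R(\unitobj)\in\mathcal{Z}(\mathcal{C})$, and, generalising the computation behind Example~\ref{ex:ch-t-MTC} (the case $\mathcal{C}$ itself modular), one checks that $\langle\chi_i,g_r\rangle$ is a rational expression — with coefficients involving the nonzero scalars $|\mathfrak{C}_r|=\dim(\mathfrak{C}_r)$ and $\dim(V_i)$ — in the entries of the $S$-matrix and the pivotal dimensions of $\mathcal{Z}(\mathcal{C})$; concretely it should equal, up to those scalars, the $S$-matrix entry pairing $\mathfrak{C}_r$ with the image of $V_i$ under $R$. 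By the classical results on modular data (Anderson--Moore--Vafa on the finiteness of the order $N$ of the twist, and de Boer--Goeree, Coste--Gannon on cyclotomicity, in the categorical form of Ng--Schauenburg), all entries of the $S$-matrix and all pivotal dimensions of $\mathcal{Z}(\mathcal{C})$ lie in $\mathbb{Q}(\zeta_N)$; hence so does each $\langle\chi_i,g_r\rangle$, and being an algebraic integer in $\mathbb{Q}(\zeta_N)$ it is a cyclotomic integer.

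The first step is routine; the substance, and the main obstacle, is the second — specifically the dictionary between the class sums $\overline{\mathfrak{C}}_r$ of $\mathcal{C}$ and the $S$-matrix of $\mathcal{Z}(\mathcal{C})$, i.e.\ proving the general form of Example~\ref{ex:ch-t-MTC}, and, secondarily, checking that the pivotal-versus-spherical issue does not obstruct the cyclotomicity theorem. An alternative finish that avoids the dictionary is purely Galois-theoretic: since $\CF(\mathcal{C})$ is split semisimple, $\mathrm{Gal}(\overline{\mathbb{Q}}/\mathbb{Q})$ permutes the algebra characters $\omega_r=\langle\chi_\bullet,g_r\rangle$ of $\Gr_k(\mathcal{C})$, so the number field $L=\mathbb{Q}(\langle\chi_i,g_r\rangle)$ is the fixed field of the kernel of this permutation action; one then shows $L/\mathbb{Q}$ is abelian — using the duality relation $\langle\chi_{i^*},g_r\rangle=\langle\chi_i,g_{\bar r}\rangle$ coming from $N_{i^*}=N_i^{\mathsf{T}}$, the orthogonality relations of Corollary~\ref{cor:ct-ortho}, and the finiteness of the Frobenius--Schur exponent of $\mathcal{C}$ — and concludes by the Kronecker--Weber theorem. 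Either way, the single indispensable external input is the finiteness of the exponent, equivalently the cyclotomicity of the modular data of $\mathcal{Z}(\mathcal{C})$.
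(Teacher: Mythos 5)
Your first paragraph reproduces the paper's actual proof: since $f_r$ is a primitive idempotent of the commutative split semisimple algebra $\CF(\mathcal{C})\cong\Gr_k(\mathcal{C})$, there is an algebra map $\rho_r:\CF(\mathcal{C})\to k$ with $f\star f_r=\rho_r(f)\,f_r$, and \eqref{eq:ct-chg-basis-2} gives $\rho_r(\chi_i)=\langle\chi_i,g_r\rangle$, so the character-table entries are the values of the algebra characters of the Grothendieck algebra on the classes $[V_i]$, equivalently eigenvalues of the integer fusion matrices $N_i$. At exactly this point the paper invokes \cite[Corollary 8.53]{MR2183279}, which states precisely that these values are cyclotomic integers, and stops. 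Your second paragraph is an attempt to re-derive that corollary by passing to $\mathcal{Z}(\mathcal{C})$; this is indeed how Etingof--Nikshych--Ostrik prove it, but, as you yourself flag, the two substantive points --- the dictionary between the class sums of $\mathcal{C}$ and the $S$-matrix of $\mathcal{Z}(\mathcal{C})$ (i.e.\ the general form of Example~\ref{ex:ch-t-MTC}), and the fact that the given pivotal structure need not be spherical, so the $S$-matrix of $\mathcal{Z}(\mathcal{C})$ must be taken with respect to a possibly different canonical spherical structure, changing the dimensions entering your formulas --- are left unresolved, and the Galois-theoretic alternative likewise defers the real work to the finiteness of the Frobenius--Schur exponent. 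None of this extra work is needed: once $\langle\chi_i,g_r\rangle=\rho_r(\chi_i)$ is established, the cited corollary finishes the proof. So the part of your proposal that is actually proved coincides with the paper's argument, and the remainder is an (incomplete, but correctly oriented) sketch of the external input that the paper simply cites.
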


\begin{proof}
  Let $i, r \in \{ 0, \dotsc, m \}$. Since $f_r \in \CF(\mathcal{C})$ is a primitive idempotent, there is an algebra map $\rho_r: \CF(\mathcal{C}) \to k$ such that $f \star f_r = \rho_r(f) f_r$ for all $f \in \CF(\mathcal{C})$. By \cite[Corollary 8.53]{MR2183279}, $\rho_r(\chi_i)$ is a cyclotomic integer. By~\eqref{eq:ct-chg-basis-2}, we have
  \begin{equation*}
    \rho_r(\chi_i) f_r = \chi_i \star f_r
    = \sum_{s = 0}^m \langle \chi_{i}, g_s \rangle \, f_s \star f_r
    = \sum_{s = 0}^m \langle \chi_{i}, g_s \rangle \Kdelta_{r,s} f_r
    = \langle \chi_i, g_r \rangle \, f_r,
  \end{equation*}
  and hence $\rho_r(\chi_i) = \langle \chi_i, g_r \rangle$. Therefore the $(i,r)$-th entry of the character table, $\langle \chi_i, g_r \rangle$, is a cyclotomic integer.
\end{proof}

\begin{example}
  \label{ex:grp-char-table}
  We consider the case where $\mathcal{C} = kG\mbox{-{\sf mod}}$ is the category of representations of a finite group $G$ such that $|G| \ne 0$ in $k$.
  As in \S\ref{subsec:Hopf-alg} and \S\ref{subsec:Hopf-alg-Fourier}, we identify $\mathcal{Z}(\mathcal{C})$ with the Yetter-Drinfeld category over $k G$.
  Then $R(\unitobj)$ is the vector space $k G$ with the action $\triangleright$ and the coaction given by
  \begin{equation*}
    g \triangleright a = g a g^{-1}
    \quad \text{and} \quad
    a \mapsto a \otimes a
    \quad (a, g \in G),
  \end{equation*}
  respectively. Now let $\mathfrak{C}_0, \dotsc, \mathfrak{C}_m$ be the conjugacy classes of $G$ with $1 \in \mathfrak{C}_0$.
  Then we have the following decomposition in the Yetter-Drinfeld category:
  \begin{equation*}
    R(\unitobj) = \bigoplus_{r = 0}^m \, \mathrm{span}_k \, \mathfrak{C}_r.
  \end{equation*}
  Thus we can (and do) identify a conjugacy class in the sense of Definition~\ref{def:conj-class} with a conjugacy class of $G$ in the usual sense.
  Since the pivotal dimension coincides with the ordinary dimension in our case, the size of $\mathfrak{C}_r$ in the sense of Definition~\ref{def:conj-class} is precisely the number of elements of the conjugacy class $\mathfrak{C}_r$.

  Let $\lambda: k G \to k$ be the cointegral such that $\lambda(1) = 1$, and let $\Lambda \in k G$ be the integral such that $\langle \lambda, \Lambda \rangle = 1$.
  Explicitly, $\lambda(g) = \Kdelta_{g, 1}$ ($g \in G$) and $\Lambda$ is the sum of all elements of $G$ in $k G$.
  For $r = 0, \dotsc, m$, we define $f_r$ and $\widetilde{f}_r$ as above.
  Then the class function $f_r$ is the characteristic function on the subset $\mathfrak{C}_r \subset G$.
  Thus, by \eqref{eq:Fourier-Hopf-inv}, the class sum associated to $\mathfrak{C}_r$ is given by
  \begin{equation*}
    \mathfrak{F}_{\lambda}^{-1}(f_{r})
    = \sum_{g \in G} \langle f_r, g \rangle \, g = \sum_{g \in \mathfrak{C}_r} g.
  \end{equation*}
  Hence the element $g_r$ defined by~\eqref{eq:conj-class-aver} is given by $g_r = |\mathfrak{C}_g|^{-1} \sum_{g \in \mathfrak{C}_r} g$.
  From this, we see that the character table of $\mathcal{C}$ in the sense of Definition~\ref{def:ct} agrees with the character table of $G$ in the usual sense.

  The above understanding of conjugacy classes by the Yetter-Drinfeld category is due to Zhu \cite{MR1402892}.
  Cohen and Westreich \cite{MR2863455} have defined a conjugacy class of a semisimple Hopf algebra in the spirit of Zhu.
  Our definitions basically agree with Cohen and Westreich's (see also \S\ref{subsec:Hopf-alg-Fourier}).
  It is interesting to investigate how can we generalize the results of \cite{MR2863455,MR3195413} to the setting of fusion categories.
\end{example}

\begin{example}[Modular tensor categories]
  \label{ex:ch-t-MTC}
  Suppose that the fusion category $\mathcal{C}$ is a ribbon category with braiding $\sigma$. The $S$-matrix $S = (s_{i j})$ of $\mathcal{C}$ is then defined by
  \begin{equation*}
    s_{i j} = \trace(\sigma_{V_j,V_i^*} \sigma_{V_i^*,V_j})
    \quad (i, j = 0, \dotsc, m).
  \end{equation*}
  Set $d_j = s_{0 j}$. By \cite[Theorem 3.1.12]{MR1797619} and its proof, the linear map
  \begin{equation*}
    Q: \CF(\mathcal{C}) \to \CE(\mathcal{C}),
    \quad Q(\chi_i) = \sum_{j = 0}^m \frac{s_{i j}}{d_j} e_j
    \quad (i = 0, \dotsc, m)
  \end{equation*}
  is in fact a homomorphism of algebras.

  A {\em modular tensor category} \cite{MR1797619} is a ribbon fusion category whose $S$-matrix is invertible.
  If $\mathcal{C}$ is a modular tensor category, then $Q$ is an isomorphism of algebras.
  After relabeling indices, we may assume that $Q(f_j) = e_j$ for all $j$. Then
  \begin{equation*}
    \chi_i = Q^{-1} Q(\chi_i)
    = \sum_{j = 0}^m \frac{s_{i j}}{d_j} Q^{-1}(e_j)
    = \sum_{j = 0}^m \frac{s_{i j}}{d_j} f_j
  \end{equation*}
  for all $i$. It follows from Corollary~\ref{cor:ct-ortho} that the matrix $(s_{i j} /  d_j)$ is the character table of $\mathcal{C}$.
  This result generalizes \cite[Theorem 3.2]{MR2863455}.
\end{example}


\def\cprime{$'$}
\providecommand{\bysame}{\leavevmode\hbox to3em{\hrulefill}\thinspace}
\providecommand{\MR}{\relax\ifhmode\unskip\space\fi MR }
\providecommand{\MRhref}[2]{%
  \href{http://www.ams.org/mathscinet-getitem?mr=#1}{#2}
}
\providecommand{\href}[2]{#2}

\end{document}